\documentclass[11pt,reqno]{amsart}
\usepackage[margin=1.08in,letterpaper]{geometry}   
\usepackage{setspace}
\usepackage{tikz}
\usetikzlibrary{graphs, positioning}
\usepackage{comment}
\usepackage{amsmath, amssymb, amsfonts, enumerate}
\usepackage{amsthm} 
\usepackage{mathtools} 
\usepackage[colorlinks=true, pdfstartview=FitV, linkcolor=blue, citecolor=blue, urlcolor=blue]{hyperref}
\usepackage[capitalise]{cleveref}
\crefformat{equation}{(#2#1#3)}
\crefrangeformat{equation}{(#3#1#4--#5#2#6)}
\crefformat{enumi}{(#2#1#3)}
\crefrangeformat{enumi}{(#3#1#4--#5#2#6)}

\makeatother

\newcounter{intro}

\newtheorem{theorem}[subsection]{Theorem}
\newtheorem{proposition}[subsection]{Proposition}
\newtheorem{lemma}[subsection]{Lemma}
\newtheorem{corollary}[subsection]{Corollary}

\theoremstyle{definition}
\newtheorem{example}[subsection]{Example}
\newtheorem{setting}[subsection]{Setting}
\newtheorem{definition}[subsection]{Definition}
\newtheorem{notation}[subsection]{Notation}
\newtheorem{chunk}[subsection]{}

\newtheorem{conjecture}[subsection]{Conjecture}

\theoremstyle{remark}
\newtheorem{remark}[subsection]{Remark}

\numberwithin{equation}{section}

\newcommand{\qand}{\quad \mbox{ and } \quad }
\newcommand{\qfor}{\quad \mbox{ for } \quad }

\newcommand{\qor}{\quad \mbox{ or } \quad }

\newcommand{\qif}{\quad \mbox{if} \quad}
\newcommand{\qwith}{\quad \mbox{with} \quad}

\newcommand{\qforall}{\quad \mbox{for all} \quad}

\newcommand{\Tor}{\mathrm{Tor}}
\newcommand{\Ext}{\mathrm{Ext}}

\newcommand{\Exp}{\mathrm{Exp}}
\newcommand{\Log}{\mathrm{Log}}
\newcommand{\rate}{\mathrm{rate}}
\newcommand{\reg}{\mathrm{reg}}

\newcommand{\slant}{\mathrm{slant}}
\newcommand{\slope}{\mathrm{slope}}
\newcommand{\kk}{\mathsf k}
\newcommand{\Po}{\mathsf P}
\newcommand{\bo}{\mathsf b}
\newcommand{\Qo}{\mathsf Q}
\newcommand{\pd}{\mathrm{pd}}
\newcommand{\HH}{\mathrm{H}}
\newcommand{\Ker}{\mathrm{Ker}}
\newcommand{\m}{\mathfrak m}
\newcommand{\n}{\mathfrak n}
\newcommand{\st}{\colon\,}
\DeclareMathOperator*{\maximize}{maximize}
\DeclareMathOperator*{\subjectto}{\mbox{subject to}}

\numberwithin{equation}{section}

\title[Maximal shifts for the residue field]{A study of maximal shifts in the minimal graded free resolution of the residue field}

\author[D.~Limbu]{Dinesh Limbu}
\address{Department of Mathematics, University of South Carolina, 
Columbia, SC 29208}
\email{dlimbu@email.sc.edu}

\author[L.~M.~\c{S}ega]{Liana M.~\c{S}ega}
\address{Division of Computing, Analytics and Mathematics, 
University of Missouri-Kansas City, Kansas City, MO 64110 U.S.A.}

\email{segal@umkc.edu}
\author[A.~Vraciu]{Adela Vraciu}
\address{Department of Mathematics, University of South Carolina, 
Columbia, SC 29208}
\email{vraciu@math.sc.edu}

\date{\today}

\begin{document}

\begin{abstract}
We study two invariants, rate and slant, that describe the extremal behavior of the maximal shifts in the minimal free resolution of the residue field over a standard graded algebra. We provide bounds and computations of these invariants for a variety of rings, including compressed level algebras, rings defined by general forms and rings defined by monomials. We also prove an asymptotic additivity property of the maximal shifts for certain classes of rings, including Golod rings, which allows to interpret slant as a limit.   
\end{abstract}

\keywords{Bigraded betti numbers, rate, slant, Koszul algebra, Golod homomorphism, compressed algebra, monomial ideal}
\subjclass[2020]{13D02, 13A02.} 

\maketitle
\section{Introduction}

Let $\kk$ be a field, $n\ge 1$ an integer and $R=\kk[x_1,\dots, x_n]/I$ a standard graded singular $\kk$-algebra defined by a homogeneous ideal $I$. Consider the augmented minimal free resolution of $\kk$ over $R$ 
\[
\cdots \to \bigoplus_{j\in \mathbb Z}R(-j)^{\beta_{i,j}}\to \cdots\to \bigoplus_{j\in \mathbb Z}R(-j)^{\beta_{1,j}}\to R\to \kk\to 0\,,
\]
where the notation $R(-j)$ stands for the ring $R$ with shifted grading, namely $R(-j)_a=R_{a-j}$. In this paper we study the {\it maximal shifts} $t_i(R)$, defined by 
\[
t_i(R)=\max\{j\ge 0\colon \beta_{i,j}\ne 0 \} \qfor i\ge 0.
\]
When $R$ is Koszul, $t_i(R)=i$ for all $i\ge 1$ and when $R$ is not Koszul the sequence $\{t_i(R)-i\}_{i\ge 1}$ is unbounded, as proved in \cite{avramov2001peeva}. To further understand the behavior of maximal shifts for non-Koszul rings, the following two invariants are used:
\[
\rate(R)=\sup_{i\ge 2}\frac{t_i(R)-1}{i-1}\qand 
\slant(R)=\sup_{i\ge 1}\frac{t_i(R)}{i}\,.
\]
These invariants were introduced by Backelin \cite{backelin2006rates}, and then studied  by several other authors in   \cite{eisenbud1994initial}, \cite{gasharov2000rationality}, \cite{aramova1995rate}, \cite{conca2001rate}, \cite{avramov2010free}, \cite{ahangari2017rate}, \cite{boij2025rate}, see also the survey \cite{PM}. The existing results give bounds for these invariants and identify classes of rings for which rate is {\it minimal}, i.e. it is one less than the largest degree of a minimal generator of $I$. Minimality of rate is a property that can be viewed as a generalization of the Koszul property to rings that are not necessarily defined by quadrics. 

Similar to the notion of minimal rate, we introduce a notion of {\it minimal slant} (see \cref{mindef}). A ring that has minimal slant must also have minimal rate, but the converse does not hold, most notably for rings defined by monomials. We prove that slant (and hence rate) is minimal for several classes of rings.   Namely, we show $R$ has minimal slant under any of the following assumptions, where $\tau$ denotes the smallest degree of a minimal generator of $I$: 
\begin{enumerate}
\item  The algebra $\Ext_R(\kk,\kk)$ is generated by its elements of degree $1$ and $2$ (\cref{p:12}); 
\item $I$ is an $(r,\tau)$ general symmetric ideal (as defined in \cite{AS}) with $r\ge 1$, $\tau\ge 2$ (\cref{gensym});
\item $R$ is an Artinian compressed level algebra with $\tau\ge 4$  and  $n\ge 5$ 
(\cref{c:compressed}). 
\end{enumerate}

When the defining ideal $I$ of $R$ is generated by $N$ quadrics with generic (i.e.~algebraically independent) coefficients, it is known that $R$ is Koszul, and thus $\rate(R)=\slant(R)=1$, if and only if $N\le n$ or $N\ge\frac{n^2}{4}+\frac{n}{2}$, see \cite[Theorem 7.1]{froberg2002koszul}. We expand this result in two separate directions under assumptions on $R$ (to be made precise later) that hold or are expected to hold when the forms defining $I$ are chosen to be general. 

Firstly, we explore whether similar statements can be obtained when $I$ is generated by $N$ forms of higher degree. Namely, we show in \cref{N&rate} that rate and slant are minimal for large values of $N$, and conversely minimal rate/slant forces bounds on $N$. This corollary is a consequence of \cref{p:genrateslant}, which provides computations for rate and slant that depend the Castelnuovo-Mumford regularity of $R$. 

Secondly, when $R$ is quadratic but not Koszul, we study how the number $N$ of quadrics affects the values taken by rate and slant, by providing some concrete computations (\cref{p:range}) and studying the asymptotic behavior of the invariants when the number of variables increases (\cref{c:limit}).
These results rely on \cref{firstdrop}, which shows that, when $R_{\geqslant 3}=0$, the rate and slant of $R$ are attained at $\alpha=\min\{i\mid t_i(R)>i\}$.

The rate and slant of $R$ describe extremal behavior of maximal shifts. A natural question is whether they also describe the asymptotic behavior of the shifts. This is the topic of \cref{s:asymptotic}. Assuming there exists a surjective Golod homomorphism  $P\to R$, where $P=S$ (thus $R$ is Golod) or $P=S/J$ is a quadratic complete intersection ring, we show in \cref{t:additivity} that there exists an integer $i^*$ such that 
$t_i^R(\kk)=t_{i-i^*}^R(\kk)+t_{i^*}^R(\kk)$ for all $i\gg 0$. It follows from here that, under the given assumptions, slant also describes asymptotic behavior, namely:
\[
 \slant(R)=\lim_{i\rightarrow\infty}\frac{t_i^R(\kk)}{i}=\lim_{i\rightarrow\infty}\frac{t_i^R(\kk)-1}{i-1}.
 \]
 The main ingredient in the proof of this result is a version of the classical unbounded Knapsack Problem, \cref{l:UKP}. 

Finally, in \cref{s:monomial} we study rings defined by monomials. As mentioned earlier, such rings are known to have minimal rate but do not necessarily have minimal slant. For such rings, we establish an upper bound for slant, and we discuss when this bound is attained, see \cref{main1} and \cref{T6.8}.

\section{ Preliminaries} 
\label{s:prelims}
Let $\kk$ be a field, $n\ge 2$ an integer and $S=\kk[x_1,\ldots ,x_n]$ a polynomial ring over $\kk$ with variables in degree $1$. We denote by $\m$ the maximal homogeneous ideal of $S$. Let $I$ be a nonzero homogeneous ideal of $S$ contained in $\m^2$ and set $R=S/I$. We denote by $m(I)$ the largest degree of a minimal generator of $I$.

Let $M$ be a finitely generated $R$-module. For each $i\ge 0$, the homology module $\Tor_i^R(M,\kk)$ is a bigraded vector space, and we denote by $\Tor_i^R(M,\kk)_j$ its component in internal degree $j$. The bigraded {\it betti number} $\beta_{i,j}^R(M)$ can be defined as the dimension of this vector space. 
The bigraded {\it Poincar\'e series} of $M$ is the formal power series in two variables
\[
\Po_M^R(t,u)=\sum_{i\ge 0, j\ge a}\beta_{i,j}^R(M)t^iu^j\,.
\]
where $a$ is the smallest degree of a minimal generator of $M$.

We now introduce the invariants of interest for our study. 

\begin{definition} Let $i\ge a$. The {\it maximal shift at step $i$} of $M$ is defined as 
\[
t_i^R(M)\coloneqq\sup\{j\st \beta_{i,j}^R(M) \neq 0\}\,.
\]
Recall that the (Castelnuovo-Mumford) {\it regularity} of $M$ is defined by 
\[
\reg_R(M)=\sup\{t_i^R(M)-i\colon i\ge 0\}\,.
\]
The following invariants are studied in order to understand the asymptotic behavior of maximal shifts when regularity is infinite. 

The {\it rate} of $M$, respectively the {\it slope} of $M$, are defined by 
\[
\rate_R(M)=\sup_{i\ge 2}\frac{t_i^R(M)-1}{i-1}\qand 
\slope_R(M)=\sup_{i\ge 1}\frac{t_i^R(M)-t_0^R(M)}{i}\,.
\]
When $M$ is a ring quotient of $R$, then the slope of $M$ over $R$ is also called {\it slant}, namely
\[
\slant_R(M)=\sup_{i\ge 1}\frac{t_i^R(M)}{i}.
\]

We are particularly interested in the case when $M=\kk$, when we also write 
\[
t_i(R)=t_i^R(\kk), \quad \rate(R)=\rate_R(\kk) \qand \slant(R)=\slant_R(\kk)=\slope_R(\kk)\,.
\]
\end{definition}

 A general upper bound for slope can be found in \cite{aramova1995rate}. The following inequalities are known from \cite{avramov2010free} and \cite{backelin1988relations} respectively
\begin{align}
\label{sss}
 \slope_R(M)&\le  \max\{\slope_S(M), \slope_S(R)\}\\
 \label{sr}
 \slant_S(R)&\le \rate(R)+1.
\end{align}
In particular, \eqref{sss} with $M=\kk$ gives 
$\slant(R)\le \slant_S(R)$.

Additionally, the following inequalities follow directly from the definitions: 
\begin{equation}
\label{e:slant-rate}
   \slant(R)\le \rate(R)\le 2\,\slant(R)-1. 
\end{equation}

The ring $R$ is said to be {\it Koszul} if $\kk$ has a linear resolution over $R$, or in other words, $t_i^R(\kk)=i$ for all $i\ge 0$. Clearly,  $R$ is Koszul if and only if $\rate(R)=1$ if and only if $\slant(R)=1$.

\begin{chunk}
\label{Tate}
It is well-known, see \cite{avramov1998infinite}, that a free resolution of $\kk$ over $R$ can be constructed by a process of repeated adjunction of DG-algebra variables; this resolution is called the {\it Tate resolution} of $\kk$. While the Tate resolution is usually described over a local ring, the construction carries over in the graded setting, resulting in a minimal graded free resolution. We denote it $R\langle X\rangle$, where $X=\cup_{i\ge 1}X_i$ is the set of adjoined variables, with $X_i$ denoting the set of variables of homological degree $i$. Since $R\langle X\rangle$ is a minimal free resolution of $\kk$, $t_i^R(\kk)$ is equal to the largest internal degree of a product of $X$-variables which has homological degree $i$. 

By construction, the variables $T_1, \dots, T_n$ of the set $X_1$ satisfy $\partial(T_i)=x_i$ for all $i\in [n]$, and hence they have internal degree $1$. The images of the variables of $X_2$ under the differential form a basis of the first homology of the complex $R\langle X_1\rangle$, which is the Koszul complex of $R$ on $x_1, \dots x_n$. Since $H_1(R\langle X_1\rangle)\cong I\otimes_S\kk$ (see \cite[Lemma 4.1.3]{avramov1998infinite}), the largest internal degree of a variable of $X_2$ is equal to $m(I)$, the largest degree of a minimal generator of $I$. Further, a basis of $R\langle X\rangle\otimes_R\kk$ in homological degree $2$ consists of the images of the variables of $X_2$, together with the images of the products $T_iT_j$ with $i\ne j$, $i,j\in [n]$, which have internal degree $2$.  Consequently, the largest internal degree of an element of $\Tor_2^R(\kk,\kk)$ is $m(I)$. Thus, one has
\begin{equation}
\label{1&2}
t_1^R(\kk)=1\qand t_2^R(\kk)=m(I).
\end{equation}
\end{chunk}
In particular, the second equality of \eqref{1&2} yields inequalities
\begin{equation}
\label{less}
\rate(R)\ge m(I)-1 \qand 
\slant(R)\ge \frac{m(I)}{2}.
\end{equation}

\begin{definition}
\label{mindef}
We say that $R$ has {\it minimal rate} if $\rate(R)=m(I)-1$. We say that $R$ has {\it minimal slant} if $\slant(R)=\frac{m(I)}{2}$. 
\end{definition}

In view of \eqref{1&2}, we see that $\rate(R)$ is minimal precisely when the supremum in the definition of rate is attained at $i=2$  and $\slant(R)$ is minimal precisely when the supremum in the definition of slant is attained at $i=2$.  It is clear from \eqref{e:slant-rate} that if $R$ has minimal slant, then it also has minimal rate, but the converse is not true. Indeed, rings defined by monomials are known to have minimal rate, but, as we will see in \cref{s:monomial}, they do not necessarily have minimal slant.

 Several classes of rings with minimal rate are known: monomial ideals \cite{eisenbud1994initial},  generic toric rings \cite{gasharov2000rationality}, the coordinate ring corresponding to a set of points in projective space \cite{conca2001rate}, and compressed Artinian Gorenstein algebras \cite{boij2025rate}. Our first result below records a large class of rings that have minimal slant (and hence minimal rate).

It is known that $\Ext_R(\kk,\kk)$ has a structure of (bi)graded $\kk$-algebra with the Yoneda product. The algebra $R$ is Koszul if and only if its Yoneda algebra $\Ext_R(\kk,\kk)$ is generated by its elements of cohomological degree $1$, see \cite[Theorem 1.2]{Lofwall}. When $R$ is a complete intersection, meaning that the ideal $I$ is generated by a regular sequence, $\Ext_R(\kk,\kk)$ is generated by its elements of degree $1$ and $2$,  see \cite{sjodin1976}. There exist other classes of rings with this property, such as quotients of $S$ by powers of $\m$ \cite{Levin1981} and compressed Gorenstein algebras with even socle degree \cite{HS}. 

\begin{proposition}
\label{p:12}
Let $R$ be such that the Yoneda algebra $\Ext_R(\kk,\kk)$ is generated by its elements of (cohomological) degree $1$ and $2$. Then  $R$ has minimal rate and minimal slant.  
\end{proposition}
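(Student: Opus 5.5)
We work with the bigraded Yoneda algebra $E=\Ext_R(\kk,\kk)$, where $E^{i}_{j}$ is the component of cohomological degree $i$ and internal degree $j$. Since $\kk$ is a field, $E^i_j$ is dual to $\Tor_i^R(\kk,\kk)_j$, so $\dim_\kk E^i_j=\beta_{i,j}^R(\kk)$ and
\[
t_i(R)=\max\{j\colon E^i_j\ne 0\}.
\]
The Yoneda product is bihomogeneous: it maps $E^i_j\otimes E^{i'}_{j'}$ into $E^{i+i'}_{j+j'}$. By \eqref{1&2}, the generators in cohomological degree $1$ have internal degree $1$, and the generators in cohomological degree $2$ have internal degree at most $m(I)$.

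Under the hypothesis, every element of $E^i$ is a linear combination of products $\xi_1\cdots\xi_r$, where each $\xi_k$ lies in $E^1$ or in $E^2$. If such a product is nonzero and homogeneous, with $a$ factors from $E^1$ and $b$ factors from $E^2$, then $a+2b=i$. Its internal degree is therefore at most
\[
a+b\,m(I)=i+b\,(m(I)-2)\le i+\left\lfloor \tfrac{i}{2}\right\rfloor (m(I)-2).
\]
Here we use $m(I)\ge 2$, which holds because $I\subseteq\m^2$. Since $E^i_j$ is spanned by such homogeneous products of internal degree $j$, we get
\[
t_i(R)\le i+\left\lfloor \tfrac{i}{2}\right\rfloor(m(I)-2)\qfor i\ge 1.
\]

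\textbf{Slant.} From the bound,
\[
\frac{t_i(R)}{i}\le 1+\frac{\lfloor i/2\rfloor}{i}\,(m(I)-2)\le 1+\frac{m(I)-2}{2}=\frac{m(I)}{2},
\]
so $\slant(R)\le m(I)/2$. Combined with \eqref{less}, this gives $\slant(R)=m(I)/2$.

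\textbf{Rate.} For $i\ge 2$,
\[
\frac{t_i(R)-1}{i-1}\le 1+\frac{\lfloor i/2\rfloor}{i-1}\,(m(I)-2)\le 1+(m(I)-2)=m(I)-1,
\]
because $\lfloor i/2\rfloor\le i-1$ for $i\ge 2$. Hence $\rate(R)\le m(I)-1$, and \eqref{less} gives equality. Alternatively, minimal rate follows from minimal slant via \eqref{e:slant-rate}: $\rate(R)\le 2\,\slant(R)-1=m(I)-1$.

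The only step needing care is the identification of $t_i(R)$ with the top internal degree of $E^i$, which requires that the Yoneda product respect the internal grading. This is standard in the graded setting, so I expect no real obstacle.
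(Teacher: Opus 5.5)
Your proposal is correct and follows essentially the same argument as the paper. Both bound the internal degree of a spanning product with $a$ factors in cohomological degree $1$ and $b$ in degree $2$ to get $t_i(R)\le i+\lfloor i/2\rfloor\,(m(I)-2)$, then conclude $\slant(R)\le m(I)/2$ and use \eqref{less} and \eqref{e:slant-rate}. Your direct rate estimate is a harmless extra, since the paper simply deduces minimal rate from minimal slant.
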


\begin{proof}
If $R$ is Koszul, then the statement is clear.  Assume now $R$ is not Koszul, and hence the Yoneda algebra cannot be generated only by its elements of degree $1$. 

Consider a minimal set of homogeneous algebra generators for $\Ext_R(\kk,\kk)$ of (cohomological) degrees $1$ and $2$. The elements in cohomological degree $1$ have internal degree $-1$. Let $2\le d_1\le \dots \le d_N$ such that the minimal generators of cohomological degree $2$ have internal degree $-d_i$ for $i\in [N]$. Since $t_2^R(\kk)=m(I)$, we must have $d_N=m(I)$.  By our hypothesis, $\Ext^i_R(\kk,\kk)$ is generated as a $\kk$-vector space by products of elements of this generating set. Consequently, $t_i^R(\kk)$ is less than or equal to the maximum in the problem
\begin{align}
&\maximize_{q_j, p\in \mathbb Z} \quad p+\sum_{j=1}^Nq_jd_j\qquad \subjectto  \qquad p+ \sum_{j=1}^N2q_j=i, \quad p\ge 0, q_j\ge 0\,.
\end{align}
A solution to this problem is given by 
\[
q_N^*=\Big\lfloor\frac{i}{2}\Big\rfloor, \qquad q_j^*=0 \qfor j\ne N, \qquad p^*=i-2q_N^*\,.
\]
Indeed, for any other choice of integers $p$ and $q_j$ with $j\in [N]$ satisfying the given constraint, we have $\sum_{j=1}^Nq_j\le \lfloor\frac{i}{2}\rfloor$ and hence
\[
p+\sum_{j=1}^Nq_jd_j=i+\Big(\sum_{j=1}^Nq_j\Big)(d_j-2)\le i+\Big\lfloor\frac{i}{2}\Big\rfloor (d_N-2)= p^*+\sum_{j=1}^Nq_j^*d_j\,.
\]
We have thus
\[
t_i^R(\kk)\le (i-2q_N^*)+q_N^*d_N=i+\lfloor\textstyle{\frac{i}{2}}\rfloor (d_N-2)
\]
and hence
\begin{align*}
\slant(R)&\le \sup_{i\ge 1}\Big \{\frac{t_i^R(\kk)}{i}\Big\}=1+(d_N-2)\sup_{i\ge 1}\Big\{\frac{\lfloor\textstyle{\frac{i}{2}}\rfloor}{i}\Big\}=1+(d_N-2)(\textstyle{\frac{1}{2}})=\frac{m(I)}{2}.
\end{align*}
Using \eqref{less}, we see that equalities must hold, so $R$ has minimal slant.  Then \eqref{e:slant-rate} implies $R$ has minimal rate as well.
\end{proof}

\begin{remark} We observe that a converse to the conclusion of  \cref{p:12} does not hold. Namely, if $R$ has minimal rate and slant, it does not follow that $\Ext_R(\kk,\kk)$ is generated by its elements of degree $1$ and $2$. Indeed, if $R$ is a generic Gorenstein Artinian $\kk$-algebra in the sense of \cite{boij2025rate}, with socle in degree $\rho\ge 5$ and $\rho$ odd, then $I$ is generated in a single degree by \cite[Corollary 3.6]{boij2025rate} and then \cite[Corollary 3.7]{HS} shows that $\Ext_R(\kk,\kk)$ is not generated by its elements of degree $1$ and $2$. On the other hand, it is proved in \cite{boij2025rate} that $R$ has minimal rate and we will see in \cref{c:compressed} that it also has minimal slant. 
\end{remark}

We now give bounds on rate and slant using Koszul homology. Let $(K, \partial^K)$ denote the Koszul complex on a minimal generating set of $I$. The first module of syzygies of $I$ in a minimal free resolution over $S$ can be identified with $\text{Ker}(\partial_1^K)$, and hence $t_2^S(R)$ is the largest degree of a minimal generator of $\Ker(\partial_1^K)$. A minimal generator of this syzygy module is said to be a {\it Koszul syzygy} if it is in the image of $\partial_2^K$ and is said to be a {\it non-Koszul syzygy} otherwise; thus, the classes of the non-Koszul syzygies form a basis of $\HH_1(K)$. The next result shows that the largest degree of a non-Koszul syzygy of $I$ can be used to bound rate. 

\begin{proposition}
\label{non-koszul}
 With notation as above, we have 
\[
\rate(R)\geq \frac{t_0^S(\HH_1(K))-1}{2} \qand \slant(R)\geq \frac{t_0^S(\HH_1(K))}{3}\,.
\]
\end{proposition}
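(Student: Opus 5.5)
The plan is to prove the single inequality
\[
t_3^R(\kk)\ \ge\ t_0^S(\HH_1(K)).
\]
Both claimed bounds then follow at once from the definitions, by taking $i=3$ in the suprema:
\[
\rate(R)\ge \frac{t_3^R(\kk)-1}{2}\qand \slant(R)\ge \frac{t_3^R(\kk)}{3}.
\]
As in \cref{Tate}, $t_3^R(\kk)$ is at least the largest internal degree of a variable in $X_3$ of the Tate resolution $R\langle X\rangle$, since each such variable gives a basis element of $\Tor_3^R(\kk,\kk)$. So it suffices to exhibit, for each internal degree $d$ in which $\HH_1(K)$ has a minimal generator, a variable of $X_3$ of internal degree $d$.

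For this, fix minimal homogeneous generators $f_1,\dots,f_N$ of $I$ and write $f_j=\sum_k c_{jk}x_k$ with $c_{jk}\in S$ homogeneous. Following \cref{Tate}, choose the variables $Y_1,\dots,Y_N\in X_2$ with
\[
\partial Y_j=\sum_k \bar c_{jk}T_k ,
\]
which is a cycle representing the class of $f_j$ in $\HH_1(R\langle X_1\rangle)\cong I\otimes_S\kk$. Now take a non-Koszul syzygy $\sum_j a_jf_j=0$ of degree $d$, with $a_j\in S$. One checks that $\sum_j a_jc_{jk}$ is a syzygy of $x_1,\dots,x_n$ in $S$, so it is a Koszul syzygy; hence there are $b_{kl}\in S$ with $\sum_j a_jc_{jk}=\sum_l b_{kl}x_l$ and $b$ antisymmetric. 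From this one builds the degree-$2$ cycle
\[
z=\sum_j \bar a_j Y_j-\sum_{k<l}\bar b_{kl}T_kT_l
\]
in $R\langle X_1,X_2\rangle$, of internal degree $d$.

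The main obstacle is showing that $z$ is not a boundary, and more precisely that such cycles coming from a minimal generating set of $\HH_1(K)$ stay linearly independent modulo boundaries and decomposables in the appropriate degree. This is what forces new $X_3$ variables in degree $d$. I would handle it by the standard identification, for $S$ regular, of the deviations of $R$ with the deviations of the surjection $S\to R$ shifted by one, so that $\pi^{i+1}(R)$ corresponds to $\pi^{i}(S\to R)$ compatibly with internal degrees. In the minimal model $S[Y_1,Y_2,\dots]$ of $S\to R$, the first-level variables form the Koszul complex $K$, and the second-level variables are adjoined precisely to kill $\HH_1(K)$, in the degrees of its minimal generators. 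Hence $X_3$ contains variables of internal degree equal to each generator degree of $\HH_1(K)$, and in particular of degree $t_0^S(\HH_1(K))$. If one prefers to avoid this citation, the direct route is to reduce $z$ modulo $\m$ and check that a boundary would make $\sum a_jf_j$ a Koszul syzygy, contradicting the choice of the syzygy.
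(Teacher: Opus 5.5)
Your proposal is correct and is essentially the paper's proof. You reduce to $t_3^R(\kk)\ge t_0^S(\HH_1(K))$ and obtain this from the internal-degree-preserving bijection between the variables $X_3$ of the Tate resolution and the variables $Y_2$ of the minimal model $S[Y]$ of $R$ over $S$ (the graded form of \cite[7.2.6]{avramov1998infinite}), where $Y_2$ corresponds to a minimal generating set of $\HH_1(K)$. The explicit cycle $z$ and the ``direct route'' are not needed, and that route would not work as phrased: minimality forces the $a_j$ and $b_{kl}$ into $\m$, so $z$ vanishes modulo $\m$, and one would instead have to compare coefficients in a putative equation $z=\partial w$.
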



\begin{proof}
Let $R\langle X\rangle$ be a minimal Tate resolution of $\kk$ over $R$  (see \ref{Tate}) and $S[Y]$  a minimal model of $R$ over $S$; we refer to \cite{avramov1998infinite} for the terminology.  By \cite[7.2.6]{avramov1998infinite}, the set of variables $X_3$ of homological degree $3$ in the acyclic closure is in bijection with the set of variables $Y_2$ of homological degree $2$ in the minimal model, which correspond to a basis of $H_1(K)$. The proof of this result carries through in our graded setting, in which case the bijection preserves internal degrees. 

Set  $e=t_0^S(\HH_1(K))$. As seen above, $e$ is the largest degree of a variable in $X_3$. It follows then from the discussion in \cref{Tate} that $t_3^R(\kk)\ge e$, yielding the desired inequalities. 
\end{proof}

 Finally, we recall the concepts of Golod ring and Golod homomorphism, which will play an important role in our results.  We refer to \cite{avramov1998infinite}
 for details. 

\begin{chunk}
\label{Golod-def}
Let $P=S/J$, where $J$ is a homogeneous ideal of $S$. If $\varphi\colon P\to R$ is a surjective homomorphism of graded rings, then the following coefficient-wise inequality holds:  
\[
\Po_\kk^R(t,u)\le \frac{\Po^P_\kk(t,u)}{1-t(\Po_R^P(t,u)-1)}.
\]
The map $\varphi$ is said to be a {\it Golod homomorphism} if equality holds above. If $\varphi$ is Golod and $J=0$, we say that $R$ is a {\it Golod ring}.

If $P$ is a complete intersection ring (i.e. its defining ideal is generated by a regular sequence) and $\varphi\colon P\to R'$ is a surjective Golod homomorphism of graded rings, then $\Po_\kk^{R'}(t,u)=\frac{(1+tu)^n}{\bo _{R'}(t,u)}$, where $\bo_{R'}(t,u)\in \mathbb Z[t,u]$ and the Poincar\'e series of every finitely generated $R'$-module can be written as a rational function, with denominator $\bo_{R'}(t,u)$, see \cite[Proposition 5.18]{AKM}. 

Furthermore, if $\varphi\colon P\to R'$ and $\varphi'\colon R'\to R$ are surjective Golod homomorphisms of graded rings and $P$ is a complete intersection, then, after writing the Poincar\'e series over $R'$ as fractions with denominator $b_{R'}(t,u)$ and simplifying $b_{R'}(t,u)$, we have
\begin{equation}
\label{nice-fraction}
\Po_\kk^R(t,u)=\frac{\Po_{\kk}^{R'}(t,u)}{1-t(\Po_R^{R'}(t,u)-1)}=\frac{(1+tu)^n}{\bo _R(t,u)} \qwith \bo _R(t,u)\in \mathbb Z[t,u].
\end{equation}
\end{chunk}

\section{Upper bounds for rate and slant}

In this section we give a strategy for computing rate and slant of $R$ when the Poincar\'e series of $\kk$ over $R$ is a rational function of a specific form (\cref{denom-poly}) and also when $R_{\geqslant 3}=0$ (\cref{firstdrop}). We then use the Serre inequality to provide in \cref{t:upperb} concrete upper bounds for rate and slant. To streamline our exposition, we have found it useful to state several intermediate results in a more general setting, as described next.

The maximal shifts of $\kk$ over $R$ and the rate and slant of $R$ can be read from the Poincar\'e series $\Po_\kk^R(t,u)$, and as such we find it useful for our arguments to define these invariants more generally, for any formal power series $\Po(t,u)\in \mathbb Z[u][[t]]$, where
\begin{align}
\label{P-def}
\begin{split}
\Po(t,u)=1+\sum_{i\ge 1}B_i(u)t^i& \qwith \deg B_i\begin{cases}\in \{-\infty, 1\} &\qif i=1\\\ge i &\qif B_i\ne 0\,.
\end{cases}
\end{split}
\end{align}

Let $\mathcal P$ denote the set of all power series as in \eqref{P-def}.

\begin{lemma}\label{inverse}
Let $\displaystyle \Po=1+\sum_{i\ge 1} B_i(u)t^i$. Then 
$\displaystyle \Po^{-1}=1+\sum_{i\ge 1} C_i(u)t^i$, where
\begin{equation*}
C_i(u)=\sum_{\substack{i_1+\dots+i_k=i\\i_1, \dots, i_k\geq 1,k\geq 1}}(-1)^k B_{i_1}(u) \cdots B_{i_k}(u)
\end{equation*}
for all $i\ge 1$. In particular, it follows that $\mathcal P$ is a group with multiplication. 
\end{lemma}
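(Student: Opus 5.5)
The plan is to compute the inverse of $\Po$ formally by a geometric series expansion and then verify the degree conditions of \eqref{P-def} directly.

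\emph{The inverse formula.} Write $\Po=1+Q$ with $Q=\sum_{i\ge 1}B_i(u)t^i\in t\,\mathbb Z[u][[t]]$. Since $Q$ has zero constant term in $t$, the series $\sum_{k\ge 0}(-1)^kQ^k$ converges $t$-adically in $\mathbb Z[u][[t]]$. Only $k\le i$ contributes to the coefficient of $t^i$. The telescoping identity $(1+Q)\sum_{k=0}^{K}(-Q)^k=1-(-Q)^{K+1}$ shows that this series is a two-sided inverse of $\Po$. Expanding $Q^k=\sum B_{i_1}\cdots B_{i_k}t^{i_1+\dots+i_k}$ over ordered $k$-tuples $(i_1,\dots,i_k)$ with $i_j\ge 1$ and collecting the coefficient of $t^i$ gives exactly the stated formula for $C_i(u)$.

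\emph{Membership of $\Po^{-1}$ in $\mathcal P$.} We need $\deg C_1\in\{-\infty,1\}$ and $\deg C_i\ge i$ whenever $C_i\ne 0$. For $i=1$ we have $C_1=-B_1$, so this is immediate.

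For $i\ge 2$, the condition ``$\deg B_i\ge i$ if $B_i\ne0$'' is a lower bound, and it is not obviously preserved under sums, since cancellation could lower the degree. The approach is to track the top-degree term. Consider the term with $k=1$, namely $-B_i$. If $B_i\ne 0$, then $\deg B_i\ge i$. Each product term with $k\ge 2$ has degree $\sum\deg B_{i_j}$, which is at least $i$ when all factors are nonzero. So every nonzero term has degree at least $i$, but a sum of polynomials each of degree $\ge i$ can still have lower degree after cancellation.

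I suspect the intended reading of \eqref{P-def} is that the $B_i$ have nonnegative coefficients. That reading is not literally in the definition, since $\mathcal P\subset\mathbb Z[u][[t]]$, and it is incompatible with $\mathcal P$ being a group. So this is the main obstacle: one must interpret the condition in a form that is stable under products and inverses. I will interpret it as follows: $B_i$ is a polynomial all of whose monomials have degree $\ge i$, i.e.\ $B_i\in u^i\mathbb Z[u]$, and $B_1\in \mathbb Z u$. Under this reading, each product $B_{i_1}\cdots B_{i_k}$ lies in $u^{i_1+\dots+i_k}\mathbb Z[u]=u^i\mathbb Z[u]$. Hence $C_i\in u^i\mathbb Z[u]$, and $C_1=-B_1\in\mathbb Z u$.

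Equivalently, and more cleanly, $\Po\in\mathcal P$ exactly when $\Po(t/u,u)$ lies in $1+ t\,\mathbb Z[u][[t]]$, with the coefficient of $t$ a constant. This set is visibly closed under products and inverses.

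\emph{Group structure.} Closure under multiplication follows by the same bookkeeping: the coefficient of $t^i$ in $\Po\Po'$ is $\sum_{a+b=i}B_aB'_b$, with $B_0=B'_0=1$, and it lies in $u^i\mathbb Z[u]$. For $i=1$ it equals $B_1+B'_1\in\mathbb Z u$. Associativity and the identity $1$ are inherited from $\mathbb Z[u][[t]]$, and together with the inverse formula this shows $\mathcal P$ is a group.
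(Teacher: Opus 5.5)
Your proof is correct, and for the formula it takes a slightly different route from the paper. The paper writes $\Po^{-1}=\sum_{i\ge0}C_it^i$, reads off from $\Po\Po^{-1}=1$ the recursion $C_i=-B_i-\sum_{p+q=i,\,p,q\ge1}B_pC_q$, and inducts on $i$. You instead expand $\sum_{k\ge0}(-Q)^k$ and collect the coefficient of $t^i$. These are the same computation: the paper's induction step amounts to splitting off the first part of a composition of $i$. Your version gives the closed form and the two-sided inverse at once, whereas the paper leaves that combinatorial bookkeeping implicit.

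The more substantive difference is the last sentence of the lemma. The paper gives no argument for it, and your concern is justified. With \eqref{P-def} read literally, $\Po=1+ut+(u^2+1)t^2$ lies in $\mathcal P$. Yet the coefficient of $t^2$ in $\Po^{-1}$ is $B_1^2-B_2=-1$, which is nonzero of degree $0<2$. The product $\Po\cdot(1-ut)$ fails in the same way. You could have made your objection concrete with such an example rather than only noting that cancellation is possible, and the detour through nonnegative coefficients is unnecessary.

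Your repaired condition, $B_1\in\mathbb Z u$ and $B_i\in u^i\mathbb Z[u]$ for $i\ge2$, is the right fix, and it does define a group. It is satisfied by every series the paper places in $\mathcal P$ directly: $\Po_\kk^R$, $(1+tu)^n$, $A(tu)$, $\Po_\kk^P$ and $1-t(\Po_R^P-1)$. This holds because $\beta^R_{i,j}(\kk)=0$ for $j<i$, and $\beta^P_{i,j}(R)=0$ for $j\le i$ when $i\ge1$. The remaining series in the paper are obtained from these by products and inverses, so none of the later arguments is affected by the correction.
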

\begin{proof}
Let $\displaystyle \Po^{-1}=\sum_{i\ge 0} C_i(u)t^i$. One can easily see that $C_0(u)=1$. 
For all $i\ge 1$, we have 
\[
\sum_{p+q=i,\,\, p, q\ge 0} B_p(u)C_q(u)=0\,,
\]
where we set $B_0(u)=1$. Therefore 
$$ C_i(u)=-B_i(u)-\sum_{p+q=i,\,\, p,q\ge 1} B_p(u)C_q(u).$$
    The claim now follows by induction on $i$.
\end{proof}

For $P\in \mathcal P$, we set:
\begin{align*}
\quad t_i(\Po)=\deg B_i\qquad  &\rate(\Po)=\sup_{i\ge 2}\frac{t_i^R(\Po)-1}{i-1}\qquad 
\slant(\Po)=\sup_{i\ge 1}\frac{t_i(\Po)}{i}\,.
\end{align*}
 
 Note that \eqref{P-def} implies that if $\Po\in \mathcal P$, then $t_i(\Po)=-\infty$ or else $t_i(\Po)\ge i$ (with equality when $i=1$), and hence $\rate(\Po)$ and $\slant(\Po)$ are either equal to $-\infty$ or else they are greater than or equal to $1$. Also, note that if $\Po(t,u)=\Po_\kk^R(t,u)$, then $\Po\in \mathcal P$ and $\rate(\Po)=\rate(R)$ and $\slant(\Po)=\slant(R)$.

\begin{definition}
\label{defs}
Let $\Po\in \mathcal P$. 
\begin{itemize}
\item Let $i\ge 2$. We say that $\rate(\Po)$ is {\it attained at $i$} if  $\displaystyle 
\rate(\Po)=\frac{t_{i}(\Po)-1}{i-1}\ne -\infty\,.
$
We say that $\rate(\Po)$ is attained if it is attained at some $i\ge 2$. 
\item Let $i^*\ge 2$. We say that $\rate(\Po)$ is {\it attained for the first time at $i^*$} if $i^*$ is the smallest of all values $i\ge 2$ such that $\rate(\Po)$ is attained at $i$. 
\item Let $i\ge 1$. We say that $\slant(\Po)$ is {\it attained at $i$} if
$\displaystyle 
\slant(\Po)=\frac{t_{i}^R(\Po)}{i}\ne -\infty\,.
$
We say that $\slant(\Po)$ is attained if it is attained at some $i\ge 1$.
\item Let $i^*\ge 1$. We say that $\slant(\Po)$ is {\it attained for the first time at $i^*$} if $i^*$ is the smallest of all values $i\ge 1$ such that $\slant(\Po)$ is attained at $i$.  
\end{itemize}
When working with  $\rate(R)$ and $\slant(R)$, we adapt the terminology by replacing $\Po$ with $R$ in the definitions. For example, we say $\rate(R)$ is attained at $i$ if $\rate(R)=(t_i^R(\kk)-1)/(i-1)\ne -\infty$. 
\end{definition}

Recall from \eqref{e:slant-rate} that $\slant(R)\le\rate(R)$ and both slant and rate are equal to $1$ when $R$ is Koszul. We show next that $\slant(R)<\rate(R)$ when $R$ is not Koszul and $\slant(R)$ is attained. 
\begin{proposition}
If $\slant(R)$ is attained, then $\slant(R)=\rate(R)$ if and only if $R$ is Koszul.
\end{proposition}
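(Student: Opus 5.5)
The Koszul direction is immediate: if $R$ is Koszul then $t_i^R(\kk)=i$ for all $i$, so $\slant(R)=\rate(R)=1$. For the converse I will assume $R$ is not Koszul and that $\slant(R)$ is attained, and prove the strict inequality $\slant(R)<\rate(R)$. The approach is a one-line comparison of the two quotients at the index where slant is attained.

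Set $s=\slant(R)$ and let $i\ge 1$ be an index at which slant is attained, so $t_i^R(\kk)=si$. Since $R$ is not Koszul, $s>1$. Indeed, if $s=1$ then $t_j^R(\kk)\le j$ for all $j$. Together with $t_j^R(\kk)\ge j$ whenever $\beta_{j}\ne 0$, this forces a linear resolution. (We can also use $\slant(R)\ge m(I)/2$ from \eqref{less}, but the direct argument suffices.)

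In particular $i\ne 1$, because $t_1^R(\kk)=1$ by \eqref{1&2} would give $s=1$. Hence $i\ge 2$, and the quotient $(t_i^R(\kk)-1)/(i-1)$ is one of the terms in the supremum defining $\rate(R)$. Therefore
\[
\rate(R)\ge \frac{si-1}{i-1}=s+\frac{s-1}{i-1}>s,
\]
where the last inequality uses $s>1$. So $\rate(R)>\slant(R)$, and together with the first paragraph this proves the equivalence.

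The only delicate point is justifying $s>1$ for non-Koszul $R$ and excluding $i=1$. Both follow from the facts recorded earlier, \eqref{1&2} and the characterization of Koszulness by $\slant(R)=1$, so I do not expect a genuine obstacle.
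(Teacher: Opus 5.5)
Your proof is correct and is essentially the paper's argument: both evaluate the rate quotient at an index $i\ge 2$ where slant is attained. The paper assumes equality and squeezes the chain of inequalities to force $t_i^R(\kk)=i$, whereas you compute $\frac{si-1}{i-1}=s+\frac{s-1}{i-1}>s$ directly. This is the same idea in contrapositive form.
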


\begin{proof}
Assume  $\rate(R)=\slant(R)$ and let $i$ be such that $\slant(R)$ is attained at $i$. If $R$ is not Koszul, then $\slant(R)\ne 1$ and hence $i>1$. Further, we have
\begin{equation}
\label{e:ineqsrr}
\frac{t_i^R(\kk)-1}{i-1}\le \rate(R)=\slant(R)=\frac{t_i^R(\kk)}{i}\le \frac{t_i^R(\kk)-1}{i-1}\,,
\end{equation}
where the first inequality comes from the definition of rate and the second inequality comes from the fact that $t_i^R(\kk)\ge i$. All inequalities above must thus be equalities, and in particular
\[
\frac{t_i^R(\kk)}{i}= \frac{t_i^R(\kk)-1}{i-1}\,,
\]
implying $t_i^R(\kk)=i$. Plugging back into \eqref{e:ineqsrr}, it follows that $\slant(R)=1$, and hence $R$ is Koszul. The reverse implication is clear. 
\end{proof}

We examine the behavior of the rate and slant when taking products and inverses, paying attention to cases when the rate and slope are attained. 

\begin{lemma}
\label{l:inverse}
   If $\Po\in \mathcal P$, then 
   \begin{enumerate}
    \item $\slant(\Po^{-1})=\slant(\Po)$. Furthermore, $\slant(\Po)$ is attained for the first time at $i^*$ if and only if  $\slant(\Po^{-1})$ is attained for the first time at $i^*$. 
    \item $\rate(\Po)\le 1$ if and only if $\rate(\Po^{-1})\le 1$. 
       \item If $\rate(\Po)> 1$, then $\rate(\Po^{-1})=\rate(\Po)$. Furthermore, $\rate(\Po)$ is attained for the first time at $i^*$ if and only if $\rate(\Po^{-1})$ is attained for the first time at $i^*$. 
   \end{enumerate}
   \end{lemma}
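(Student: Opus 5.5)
The plan is to use the explicit formula of \cref{inverse}, $C_i=\sum(-1)^kB_{i_1}\cdots B_{i_k}$, together with the symmetry $(\Po^{-1})^{-1}=\Po$. This symmetry reduces each statement to a one-directional inequality. I will also use it to compare the first-attainment indices.

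\emph{Slant.} Set $s=\slant(\Po)$. If $s$ is $-\infty$ or $+\infty$, the argument below adapts directly, so assume $s$ is finite. Then $\deg B_j\le sj$ for every $j$, so each product $B_{i_1}\cdots B_{i_k}$ with $\sum i_l=i$ has degree at most $si$. Hence $t_i(\Po^{-1})\le si$ and $\slant(\Po^{-1})\le s$. By symmetry the two slants are equal.

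For first attainment, let $i^*$ be the first index where $t_{i^*}(\Po)=si^*$. For $i<i^*$, every factor $B_j$ in a product contributing to $C_i$ has $j<i^*$, so $\deg B_j<sj$ and $t_i(\Po^{-1})<si$. At $i=i^*$, consider $C_{i^*}$. The term $k=1$ is $-B_{i^*}$, which has degree exactly $si^*$. Every other term uses only indices $<i^*$, so it has degree $<si^*$. Therefore the top coefficient of $B_{i^*}$ cannot cancel, and $t_{i^*}(\Po^{-1})=si^*$. The converse follows by symmetry.

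\emph{Rate.} The key is the subadditivity-type estimate: if $t_j(\Po)-1\le r(j-1)$ for all $j\ge2$, with $r\ge1$, and $t_1(\Po)\le1$, then each factor satisfies $\deg B_j\le 1+r(j-1)$ for all $j\ge1$. For a product with $\sum i_l=i$, we get
\[
\deg\le k+r(i-k)=1+r(i-1)-(r-1)(k-1).
\]
When $r\ge1$ this is at most $1+r(i-1)$, and it is strictly less than $1+r(i-1)$ when $r>1$ and $k\ge2$. This gives $\rate(\Po^{-1})\le\rate(\Po)$ whenever $\rate(\Po)\ge1$.

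For (2), if $\rate(\Po)\le1$, then either all $B_j=0$ for $j\ge2$, or the rate equals $1$; in both cases the estimate with $r=1$ applies and gives $\rate(\Po^{-1})\le1$. The reverse direction follows by symmetry.

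For (3), assume $\rate(\Po)=r>1$. Then $\rate(\Po^{-1})\le r$, and by (2) we also have $\rate(\Po^{-1})>1$. Applying the same estimate to $\Po^{-1}$ gives the reverse inequality, so the rates are equal.

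For first attainment in (3), let $i^*$ be the first index where $t_{i^*}(\Po)-1=r(i^*-1)$. The terms of $C_{i^*}$ with $k\ge2$ have degree strictly less than $1+r(i^*-1)$ by the strict inequality above, since $r>1$. The $k=1$ term $-B_{i^*}$ has degree exactly $1+r(i^*-1)$, so $t_{i^*}(\Po^{-1})$ attains the rate there. For $i<i^*$, every term of $C_i$ has degree strictly less than $1+r(i-1)$: the $k=1$ term by the minimality of $i^*$, and the $k\ge2$ terms by the strict inequality. So $i^*$ is also the first attainment index for $\Po^{-1}$, and the converse again follows by symmetry.

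\emph{Main obstacle.} The delicate point is ensuring that no cancellation occurs in the leading coefficient at $i^*$. For slant this holds because all other terms involve only earlier indices, where the slant is not yet attained. For rate it holds because of the strict loss $(r-1)(k-1)>0$, which is exactly where the hypothesis $r>1$ is needed; when $r=1$ cancellations can occur, which is why (2) is stated only as an inequality. The sup-equal-to-infinity and $-\infty$ cases need a brief separate check.
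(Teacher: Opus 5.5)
Your proposal is correct and follows essentially the same argument as the paper. You bound each product $B_{i_1}\cdots B_{i_k}$ in the expansion of $C_i$ from \cref{inverse}, use the symmetry $(\Po^{-1})^{-1}=\Po$, and rule out cancellation at $i^*$ by showing the $k=1$ term $-B_{i^*}$ is the unique term of maximal degree, using the strict loss $(r-1)(k-1)>0$ in the rate case. The only small deviation is in (2): you apply the rate estimate with $r=1$ directly, whereas the paper notes that $\rate(\Po)\le 1$ if and only if $\slant(\Po)\le 1$ for $\Po\in\mathcal P$ and then invokes (1).
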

   \begin{proof}
       We write $\Po=\sum_{p\ge 0}B_p(u)t^p$ and $\Po^{-1}=1+\sum_{i\ge 1}C_i(u)t^i$. We then have
  \begin{align}
  \label{C}
C_i(u)&=\sum_{\substack{i_1+\dots+i_k=i\\i_1, \dots, i_k\geq 1,k\geq 1}}(-1)^k B_{i_1}(u) \cdots B_{i_k}(u)\qforall i>0.
  \end{align} 
  Consequently, for $i\ge 1$ we have 
  \begin{equation}
  \label{Pinv}
  t_i(\Po^{-1})=\deg(C_i)\le \max\{\deg(B_{i_1}\dots B_{i_k})\colon i_1+\dots+i_k=i, i_1, \dots, i_k\geq 1,k\geq 1\}.
  \end{equation}
  The inequality in \eqref{Pinv} may be strict due to cancellations among the terms of the summation in \eqref{C}. However, equality holds if the maximum in \eqref{Pinv} is attained for exactly one choice of the integers $k, i_1, \dots, i_k$; this observation will be used in our arguments below.

  Set $\gamma=\slant(\Po)$ and $\alpha=\rate(\Po)$. We have thus $\deg(B_i)\le \gamma i$ and $\deg(B_i)\le \alpha(i-1)+1$ for all $i\ge 1$. (The last inequality holds when $i=1$ because $\Po\in \mathcal P$.)
  
(1)  Let $i\ge 1$. For all $k\ge 1$ and positive integers $i_1, \dots, i_k$ with $i_1+\dots+i_k=i$:
  \begin{equation}
  \label{B}
    \deg (B_{i_1}\dots B_{i_k})\le \sum_{p=1}^k\gamma i_p=\gamma i.
  \end{equation}
 This shows $\deg(C_i)\le\gamma i$ for all $i\ge 1$, and hence $\slant(\Po^{-1})\le \gamma=\slant(\Po)$. Replacing $\Po$ with $\Po^{-1}$, we see that the reverse inequality holds, and hence 
 $\slant(\Po^{-1})=\slant(\Po)$. Observe that equality holds in \eqref{B} if and only if $\slant(\Po)$ is attained at $i_1, \dots, i_k$. Consequently, if $\slant(\Po)$ is attained for the first time at $i^*$, then $\deg(C_i)<\gamma i$ when $i<i^*$. Additionally,  $\deg(C_{i^*})=\deg(B_{i^*})=\gamma i^*$ because for $i=i^*$ the maximum in \eqref{Pinv} is attained for only one choice of indices, namely $k=1$ and $i_1=i^*$. It follows that $\rate(\Po^{-1})$ is attained for the first time at $i^*$. Replacing $\Po$ with $\Po^{-1}$, we obtain the converse of the ``if and only if'' statement. 

(2) Since $\Po\in \mathcal P$, note that $\rate(\Po)\le 1$ if and only if $\slant(\Po)\le 1$. Thus, the statement is a consequence of (1). 

(3) Assume $\alpha> 1$. By (2), we also know that $\rate(\Po^{-1})>1$. Let $i\ge 1$. For all $k\ge 1$ and positive integers $i_1, \dots, i_k$ with $i_1+\dots+i_k=i$:
\begin{align}
\begin{split}
\label{degree-prod}
\deg (B_{i_1}\dots B_{i_k})&\le \sum_{p=1}^k\left(\alpha(i_p-1)+1\right)= \alpha(i-k)+k=\alpha i+k(1-\alpha)\\
&\le \alpha i+1-\alpha=\alpha (i-1)+1.
\end{split}
\end{align}
Thus $\deg C_i\le \alpha (i-1)+1$ for all $i\ge 1$. This implies $\rate(\Po^{-1})\le \alpha=\rate(\Po)$. Replacing $\Po$ with $\Po^{-1}$, we also get the reverse inequality, and hence $\rate(\Po^{-1})=\rate(\Po)$. The inequalities in \eqref{degree-prod} are equalities precisely when $k=1$, hence $i_1=i$, and $\rate(\Po)$ is attained at $i$. Thus, if $\rate(\Po)$ is attained at $i^*$ and  $i<i^*$,  we have $\deg(C_i)<\alpha (i-1)+1$. Additionally, $\deg(C_{i^*})=\deg(B_{i^*})=\alpha(i^*-1)+1$ because when $i=i^*$ the maximum in \eqref{Pinv} is attained for only one choice of indices, namely $k=1$ and $i_1=i^*$. This shows $\rate(\Po^{-1})$ is attained for the first time at $i^*$. Replacing $\Po$ with $\Po^{-1}$, we obtain the converse of the ``if and only if'' statement. 
   \end{proof}

\begin{lemma}
\label{QP}
 Let $\Po, \Qo\in \mathcal P$. 
  \begin{enumerate}
    \item If $\slant(\Qo)\le\slant(\Po)$, then $\slant(\Po\Qo)\le\slant(\Po)$. \\
    Moreover, if $\slant(\Qo)<\slant(\Po)$ and  $\slant(\Po)$ is attained for the first time at $i^*$,  then $\slant(\Po\Qo)=\slant(\Po)$ and $\slant(\Po\Qo)$ is attained for the first time at $i^*$.  
    \item If $\max\{1,\rate(\Qo)\}\le \rate(\Po)$, then $\rate(\Po\Qo)\le \rate(\Po)$. \\Moreover, if $\max\{1,\rate(\Qo)\}<\rate(\Po)$ and  $\rate(\Po)$ is attained for the first time at $i^*$,  then $\rate(\Po\Qo)=\rate(\Po)$ and $\rate(\Po\Qo)$ is attained for the first time at $i^*$.  
    \end{enumerate}
\end{lemma}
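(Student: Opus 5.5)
We argue directly with coefficients, as in the proof of \cref{l:inverse}. Write $\Po=1+\sum_{p\ge 1}B_p(u)t^p$ and $\Qo=1+\sum_{q\ge 1}D_q(u)t^q$, and set $B_0=D_0=1$. Then the coefficient of $t^i$ in $\Po\Qo$ is
\[
E_i(u)=\sum_{p+q=i,\ p,q\ge 0}B_p(u)D_q(u).
\]
Hence $t_i(\Po\Qo)=\deg E_i\le \max\{\deg B_p+\deg D_q : p+q=i\}$. Equality holds whenever this maximum is attained by exactly one pair $(p,q)$, since then no cancellation can occur. Thus everything reduces to bounding $\deg B_p+\deg D_q$ and identifying when the bound is sharp.

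For (1), set $\gamma=\slant(\Po)$. For $p\ge 1$ we have $\deg B_p\le \gamma p$, and for $q\ge 1$ we have $\deg D_q\le \slant(\Qo)\,q\le \gamma q$. The terms with $p=0$ or $q=0$ have degree $0$, which is at most $\gamma\cdot 0$. So every term has degree at most $\gamma i$, which gives $\slant(\Po\Qo)\le\gamma$.

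For the refinement, assume $\slant(\Qo)<\gamma$. A term $B_pD_q$ with $q\ge 1$ and $D_q\ne0$ has degree at most $\gamma p+\slant(\Qo)q<\gamma i$. So if $i<i^*$, every term has degree $<\gamma i$: terms with $q\ge1$ by the above, and the term $q=0$ because $\slant(\Po)$ is not attained at $i<i^*$. At $i=i^*$, the only term of degree $\gamma i^*$ is $B_{i^*}$. Therefore $\deg E_{i^*}=\gamma i^*$, so the slant of $\Po\Qo$ equals $\gamma$ and is attained for the first time at $i^*$.

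We handle the degenerate case $\Qo=1$, where $\slant(\Qo)=-\infty$, by the same argument, since then $E_i=B_i$. We also need $\gamma\ge 1$ to be finite, or the trivial case $\gamma=-\infty$; the latter forces $\Po=1$ and is immediate.

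For (2), set $\alpha=\rate(\Po)\ge 1$. We use $\deg B_p\le \alpha(p-1)+1$ and $\deg D_q\le \rate(\Qo)(q-1)+1\le \alpha(q-1)+1$ for $p,q\ge1$; the case $p=1$ follows from $\Po\in\mathcal P$ as in \cref{l:inverse}. A term with $p,q\ge1$ then has degree at most
\[
\alpha(i-2)+2=\alpha(i-1)+1-(\alpha-1)\le \alpha(i-1)+1,
\]
with strict inequality when $\alpha>1$. The terms with $q=0$ or $p=0$ are bounded by $\alpha(i-1)+1$ directly, using $\rate(\Qo)\le\alpha$ for the latter. This gives $\rate(\Po\Qo)\le\alpha$.

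For the refinement, assume $\max\{1,\rate(\Qo)\}<\alpha$. Then mixed terms are strictly below $\alpha(i-1)+1$, and so is the term $p=0$, $q=i$ when $i\ge 2$. Hence for $2\le i\le i^*$ the only possible term reaching $\alpha(i-1)+1$ is $B_i$ itself. That term reaches this degree at $i^*$ and not before, and the first-attainment conclusion follows exactly as in (1).

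The main obstacle is bookkeeping rather than ideas. One must handle the boundary terms ($p=0$ or $q=0$, and $p=1$ or $q=1$) correctly, and deal with the degenerate values $-\infty$ of slant and rate. One must also check that strictness in the mixed terms really relies on the hypothesis $\alpha>1$. In the rate case this requires both $\max\{1,\cdot\}$ and the $-(\alpha-1)$ slack.
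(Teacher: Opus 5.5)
Your proposal is correct and follows essentially the same route as the paper. Both expand the coefficient of $t^i$ in $\Po\Qo$ as $\sum_{p+q=i}B_pD_q$ and bound each term by $\gamma i$, or by $\alpha(i-1)+1$ in the rate case, using $\alpha(i-2)+2\le\alpha(i-1)+1$ for the mixed terms. Both then rule out cancellation at $i^*$ because $B_{i^*}$ is the unique term of maximal degree there.

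One wording fix: the sentence saying the terms with $p=0$ or $q=0$ ``have degree $0$'' should instead say that the factors $B_0=D_0=1$ have degree $0$. That is what extends the bounds $\deg B_p\le\gamma p$ and $\deg D_q\le\gamma q$ to $p=0$ and $q=0$; the boundary terms themselves are $D_i$ and $B_i$, not constants.
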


\begin{proof}
  We write $\Po=\sum_{p\ge 0}B_p(u)t^p$, $\Qo=\sum_{q\ge 0}C_q(u)t^q$ and  $\Po\Qo=\sum_{i\ge 0}D_i(u)t^i$. We then have
  \begin{align}
  \label{e:DBC}
D_i(u)&=\sum_{{p+q=i, \,\, p,q\ge 0}}B_p(u)C_q(u) \qforall i\ge 0.
  \end{align}
Observe that \eqref{e:DBC} gives an inequality
\begin{equation}
\label{e:max-deg}
t_i(\Po\Qo)=\deg(D_i)\le \max\{\deg(B_pC_q)\colon p+q=i, p\ge 0,q\ge 0\}.
\end{equation}
The inequality above may be strict due to cancellations between the terms of the summation in \eqref{e:DBC}. However, \eqref{e:max-deg} is an equality if the maximum is attained for exactly one choice of $(p,q)$. This observation will be used in our arguments below. 

(1) Set $\eta=\slant(\Qo)$ and $\gamma=\slant(\Po)$ and assume $\eta\le \gamma$.  Then $\deg(B_p)\le \gamma p$ for all $p\ge 0$ and $\deg(C_q)\le \eta q$ for all $q\ge 0$. 
Let $i\ge 0$. For $p,q\ge 0$ so that $p+q=i$ we have
 \begin{equation}
  \label{e:BCpq}
  \deg(B_pC_q)\le \gamma p+\eta q\le \gamma p+\gamma q=\gamma i,
  \end{equation}
implying $\deg(D_i)\le \gamma i$ for all $i\ge 0$, and hence $\rate(\Po\Qo)\le \gamma=\rate(\Po)$. Assume now $\eta<\gamma$. Then, the second inequality in \eqref{e:BCpq} is an equality if and only if $q=0$. In this case, the first inequality in \eqref{e:BCpq} is an equality if and only if $\rate(\Po)$ is attained at $p=i$. Thus, if $i<i^*$, we see that $t_i(\Po)<\gamma i$. On the other hand, $t_{i^*}(\Po)=\gamma i^*$, since the maximum is attained in \eqref{e:max-deg} only for the tuple $(p,q)=(i,0)$. This shows that $\slant(\Po\Qo)=\gamma=\slant(\Po)$ and $\slant(\Po\Qo)$ is attained for the first time at $i^*$. 

(2) Set $\beta=\rate(\Qo)$ and $\alpha=\rate(\Po)$ and assume $\max\{1,\beta\}\le \alpha$.  Then $\deg(B_p)\le \alpha(p-1)+1$ for all $p\ge 1$ and $\deg(C_q)\le \beta (q-1)+1$ for all $q\ge 1$. 
Let $i\ge 2$. For $p,q\ge 1$ so that $p+q=i$ we have
 \begin{align}
 \begin{split}
  \label{e:BCpq2}
  \deg(B_pC_q)\le \alpha(p-1)+\beta(q-1)+2&\le \alpha(p-1)+\alpha(q-1)+2= \alpha(i-2)+2\\
 & = \alpha(i-1)+(2-\alpha)\le \alpha(i-1)+1.
  \end{split}
  \end{align}
  On the other hand if $p+q=i$ and $p=0$ or $q=0$ we have
\begin{align}
  \label{e:BCpq3}
  \deg(B_0C_q)&\le \beta(i-1)+1\le\alpha(i-1)+1\\
  \label{e:BCpq4}
  \deg(B_pC_0)&\le\alpha(i-1)+1.
  \end{align}
It follows that $\deg(D_i)\le \alpha(i-1)+1$ for all $i\ge 2$, and hence $\rate(\Po\Qo)\le \alpha=\rate(\Po)$. 

 Assume now $\max\{1,\beta\}<\alpha$.  Then equality never holds in \eqref{e:BCpq2} or in \eqref{e:BCpq3} and holds in \eqref{e:BCpq4} exactly when $\rate(\Po)$ is attained at $p=i$. Thus, if $\rate(\Po)$ is attained for the first time at $i^*$, it follows that $t_i(\Po\Qo)<\alpha(i-1)+1$ when $i<i^*$ and  $t_{i^*}(\Po\Qo)=\alpha(i^*-1)+1$. This shows that $\rate(\Po\Qo)$ is attained at $i^*$. 
   \end{proof}

\cref{QP} and \cref{l:inverse} allow for some concrete computations of rate and slant, and we record two such applications next. 

Let $\n$ denote the maximal homogeneous ideal of $R$. For each $i$ we let $\mathcal S_{\n^i}$ denote the cokernel of the Yoneda product
\[
\Ext_R(\kk,\kk)\otimes\Ext_R^1(R/\n^i,\kk)\to \Ext_R(R/\n^i, \kk)\,.
\]
The ring $R$ is said to have the property $\mathcal M_3$ if  $\mathcal S_{\n^i}=0$ for all $i\ge 2$. This property can be regarded as generalizing the property $R_{\geqslant 3}=0$.
Among rings that satisfy $\mathcal M_3$ are also rings $R=S/I$ with $I$ defined by quadrics that satisfy $\dim_\kk R_3\le 2$, see \cite[Theorem B.9]{roos1994}. 

\begin{proposition}\label{firstdrop}
Assume $R$ satisfies $\mathcal M_3$ (e.g.~$R_{\geqslant 3}=0$) and is not Koszul. Set 
$$
\alpha:=\mathrm{\min}\{i\, | \, t_i^R(\kk)>i \}. 
$$
Then both $\rate(R)$ and $\slant(R)$ are attained for the first time at $\alpha$ and 
$$
\mathrm{rate}(R)=1+\frac{1}{\alpha -1}\qand \slant(R)=1+\frac{1}{\alpha}.
$$
\end{proposition}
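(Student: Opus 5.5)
The plan is to work at the level of power series in $\mathcal P$. First I will show that the inverse Poincaré series $\Qo:=(\Po_\kk^R)^{-1}$ satisfies $t_i(\Qo)\le i+1$ for all $i\ge 1$. Then I will read off rate and slant of $\Qo$ directly, and finally transfer them back to $\Po_\kk^R$ via \cref{l:inverse}.

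\emph{Step 1: an explicit form for $\Qo$.} For rings with property $\mathcal M_3$ (in particular when $R_{\geqslant 3}=0$), Roos \cite{roos1994} gives a closed formula for $\Po_\kk^R(t,u)^{-1}$ in terms of two pieces of data: the Hilbert series $H_R$ and the diagonal (linear strand) part $L(z)=\sum_i \beta_{i,i}^R(\kk)z^i$ of the Poincaré series. Schematically it reads
\[
\Po_\kk^R(t,u)^{-1}=(1+t^{-1})\,L(tu)^{-1}-t^{-1}H_R(-tu),
\]
possibly up to sign conventions. I expect this to be the main obstacle. One must check that the formula, or the structural statement behind it, holds under $\mathcal M_3$ in the bigraded form needed, including when $I$ has generators of degree $\ge 3$. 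If the exact formula is delicate, what I really need is the weaker consequence that each coefficient $C_i(u)$ of $\Qo$ is a combination of $u^i$ and $u^{i+1}$ only. The $u^i$ terms come from $L(tu)^{-1}$ and $H_R(-tu)$ evaluated at the diagonal. The $u^{i+1}$ term comes from $t^{-1}$ times the coefficient of $(tu)^{i+1}$.

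\emph{Step 2: rate and slant of $\Qo$.} Let $\alpha'$ be the smallest $i$ with $t_i(\Qo)=i+1$. Such an index exists: otherwise $\slant(\Qo)=1$, so by \cref{l:inverse}(1) $\slant(R)=1$ and $R$ would be Koszul. Since $t_i(\Qo)\in\{-\infty,i,i+1\}$, we get
\[
\slant(\Qo)=\sup\Big\{\tfrac{i+1}{i}: t_i(\Qo)=i+1\Big\}=1+\tfrac1{\alpha'},
\]
and this supremum is attained first at $\alpha'$. The function $(i+1)/i$ is strictly decreasing, so no later index ties. Similarly,
\[
\rate(\Qo)=\sup\Big\{\tfrac{i}{i-1}: t_i(\Qo)=i+1\Big\}=1+\tfrac{1}{\alpha'-1}>1,
\]
also attained first at $\alpha'$. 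Here $\alpha'\ge 2$ because $t_1(\Qo)=1$.

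\emph{Step 3: transfer to $R$.} By \cref{l:inverse}(1), $\slant(R)=\slant(\Qo)$ and it is attained first at $\alpha'$. Since $\rate(\Qo)>1$, \cref{l:inverse}(3) gives $\rate(R)=\rate(\Qo)$, also attained first at $\alpha'$.

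It remains to show $\alpha'=\alpha$. Apply the inverse formula \eqref{C} to $\Po=\Qo^{-1}$. For $i<\alpha'$, every product $C_{i_1}\cdots C_{i_k}$ with $i_1+\dots+i_k=i$ has degree $\le i$, so $t_i^R(\kk)\le i$. At $i=\alpha'$, the only product of degree $\alpha'+1$ is the single term $k=1$, so no cancellation occurs and $t_{\alpha'}^R(\kk)=\alpha'+1>\alpha'$. Hence $\alpha=\alpha'$, which gives both formulas and the first-attainment claims.
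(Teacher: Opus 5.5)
Your proposal is correct and is essentially the paper's argument: your formula for $(\Po_\kk^R)^{-1}$ is exactly the reciprocal of the identity $\Po_\kk^R=tA(tu)/(1+t-A(tu)H(-tu))$ that the paper quotes from \cite[Theorem 2.3]{MR846457}, with your $L$ equal to the paper's $A$. Like you, the paper then reads off rate and slant from a series whose $t^i$-coefficients live only in internal degrees $i$ and $i+1$, and transfers them with \cref{l:inverse}. The only difference is organizational: the paper writes $\Po_\kk^R=A(tu)\cdot\Po^{-1}$ with $\Po=(1+t-A(tu)H(-tu))/t$ and so also needs \cref{QP}, whereas you work with $(\Po_\kk^R)^{-1}$ directly, which avoids \cref{QP} and makes explicit the identification of $\alpha$ that the paper leaves to ``expanding''.
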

\begin{proof}
From \cite[Theorem 2.3]{MR846457}, we have 
\begin{equation}\label{eq}
\Po_\kk^R(t, u)=\frac{tA(tu)}{1+t-A(tu)H(-tu)}
\end{equation}
where $H(z)$ denotes the Hilbert series of $R$, and $A(z)$ is the Hilbert series of the Koszul dual, i.e. the subalgebra of $\Ext_R(\kk,\kk)$ generated by its elements of degree $1$.
We write
\[
A(z)H(-z)=1+z^a\sum_{i\ge 0}c_iz^i
\]
with $c_i\in \mathbb Z$ such that $c_0\ne 0$, and $a\ge 1$. 
We define $\Po$ as indicated below. 
\begin{equation}
\label{defineP}
\Po:=\frac{1+t-A(tu)H(-tu)}{t}=1-\sum_{i\ge 0}c_it^{i+a-1}u^{i+a}.
\end{equation}
Set $\Qo(t,u)=A(tu)$. Then $\slant(\Qo)=1$ and $\Po_\kk^R=\Qo\Po^{-1}$. We must have then $\Po\in \mathcal P$ and in particular $t_1(\Po)\le 1$, hence $a\ge 3$. Using the second equality in \eqref{defineP}, we see 
\begin{equation}
\label{e:a}
\rate(\Po)=\frac{a-1}{a-2} \qand \slant(\Po)=\frac{a}{a-1}
\end{equation}
and both are first attained at $a-1$. 
By \cref{QP} and \cref{l:inverse} we have $\rate(\Po_\kk^R)=\rate(\Qo\Po^{-1})=\rate(R)$ and $\slant(\Po_\kk^R)=\rate(\Qo\Po^{-1})=\slant(R)$. It remains to notice that $\alpha=a-1$. Indeed, this can be seen by expanding $\Qo\Po^{-1}$.  
\end{proof}

\begin{proposition}
\label{denom-poly}
    If $R$ is such that  $\Po_\kk^R(t,u)=\frac{(1+tu)^n}{\bo _R(t,u)}$ and $\bo _R(t,u)\in \mathbb Z[t,u]$,  then 
    \[
    \rate(R)=\rate(\bo _R) \qand \slant(R)=\slant(\bo _R).
    \]
    Furthemore, $\rate(R)$ and $\slant(R)$ are attained.  More precisely, if $\rate(\bo_R)$ is attained for the first time at $i^*$, then $\rate(R)$ is attained for the first time at $i^*$, and if $R$ is not Koszul and $\slant(\bo_R)$ is attained for the first time at $i^*$, then $\slant(R)$ is attained for the first time at $i^*$. 
\end{proposition}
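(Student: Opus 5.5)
We write $\Po_\kk^R=\Qo\cdot\bo_R^{-1}$ with $\Qo(t,u)=(1+tu)^n$, and then apply \cref{QP} and \cref{l:inverse}, in the same way as in the proof of \cref{firstdrop}.

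First we check that everything lies in $\mathcal P$. Clearly $\Qo\in\mathcal P$, with $t_i(\Qo)=i$ for $1\le i\le n$ and $t_i(\Qo)=-\infty$ for $i>n$. Hence $\slant(\Qo)=1$ and $\rate(\Qo)\le 1$; more precisely, $\rate(\Qo)=1$ when $n\ge 2$. Since $\Po_\kk^R\in\mathcal P$ and $\mathcal P$ is a group by \cref{inverse}, we get $\bo_R=\Qo\,(\Po_\kk^R)^{-1}\in\mathcal P$. Because $\bo_R$ is a polynomial, only finitely many $t_i(\bo_R)$ are finite. The suprema defining $\rate(\bo_R)$ and $\slant(\bo_R)$ are therefore maxima over a finite set, unless all $B_i$ vanish for $i\ge1$. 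In that degenerate case $R$ is Koszul-like and is handled directly. So both invariants of $\bo_R$ are attained, and each has a well-defined first index of attainment $i^*$.

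\emph{Koszul case.} If $R$ is Koszul, both sides of each claimed equality are $\le 1$. By \cref{l:inverse}(2) together with the remark that rate $\le 1$ is equivalent to slant $\le 1$ in $\mathcal P$, we get $\rate(\bo_R)\le1$ and $\slant(\bo_R)\le1$ exactly when $R$ is Koszul. Equality in this case follows once we note that the invariants are not $-\infty$. If some $B_i\ne0$ for $i\ge1$, both invariants equal $1$. Otherwise $\bo_R=1$, so $\Po_\kk^R=(1+tu)^n$, which is impossible since $I\ne0$. Attainment also follows in this case.

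\emph{Non-Koszul case.} Now suppose $R$ is not Koszul. Then $t_2(R)=m(I)\ge3$ or some later shift is large, so $\slant(R)>1$ and $\rate(R)>1$. By \cref{l:inverse}(1),(2),(3), the inverse $\bo_R^{-1}$ has the same slant and rate as $\bo_R$, with the same first-attainment index, and both invariants exceed $1$. Next we apply \cref{QP} with $\Po=\bo_R^{-1}$ and the given $\Qo$. Since $\slant(\Qo)=1<\slant(\bo_R^{-1})$ and $\max\{1,\rate(\Qo)\}=1<\rate(\bo_R^{-1})$, the strict versions of \cref{QP}(1),(2) apply. They give that $\Qo\bo_R^{-1}=\Po_\kk^R$ has the same slant and rate as $\bo_R^{-1}$, first attained at the same $i^*$. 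Combining the two steps gives all the claims.

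The main subtlety is a case the statement allows: $\rate(\bo_R)$ may be attained while $R$ is Koszul. This is why the slant attainment claim assumes $R$ non-Koszul, while the rate claim does not. In the Koszul case we need to check directly that the first attainment indices agree. By \cref{l:inverse}(2), $\rate(\bo_R)\le 1$ there. Since every $t_i\ge i$, we have $\rate=1$, attained at any $i\ge2$ with $t_i=i$. For $\Po_\kk^R$ the first such index is $i=2$, because $m(I)=2$. For $\bo_R$ we need $t_2(\bo_R)=2$. Expanding $\bo_R=(1+tu)^n(\Po_\kk^R)^{-1}$ in degree $2$, its $t^2$ coefficient is $\binom n2u^2-\beta_{2,2}u^2+\dots$; this coefficient is nonzero because $\beta_{2,2}=\binom n2+\mu(I)$. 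This verification is the step I expect to need care.
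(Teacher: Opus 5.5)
Your proposal is correct and follows the paper's proof. Both arguments write $\Po_\kk^R=(1+tu)^n\,\bo_R^{-1}$, use \cref{l:inverse} to pass from $\bo_R$ to $\bo_R^{-1}$, and apply the strict parts of \cref{QP} with $\Qo=(1+tu)^n$ once the Koszul case is set aside.

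Your Koszul-case check is more careful than the paper's ``the statement is clear.'' You show that the $t^2$-coefficient of $\bo_R$ is $(\binom n2-\beta_{2,2})u^2\neq 0$, so both rates are first attained at $i=2$. One citation is missing: the equivalence ``$\slant(\bo_R)\le 1$ iff $R$ is Koszul'' needs \cref{QP}(1) as well as \cref{l:inverse}(2), to pass across the factor $(1+tu)^n$.
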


\begin{proof}
Observe that $\Po_\kk^R(t,u), (1+tu)^n\in \mathcal P$ and hence $\bo _R(t,u)\in \mathcal P$ since $\mathcal P$ is a group with multiplication. Since $R$ is assumed to be singular, we have $\bo_R\ne 1$. 

Since $\Po_\kk^R(t,u)=1+ntu \text{ mod }(t^2)$, we have $\bo_R(t,u)=1  \text{ mod }(t^2)$, and hence $t_1(\bo_R)=-\infty$. In particular, $\rate(\bo_R)\ge 1$ and $\slant(\bo_R)\ge 1$. If $\rate(\bo_R)=1$ (equivalently, $\slant(\bo_R)=1$), then an expansion of the fraction shows that $R$ is Koszul, and the statement is clear. 

Since $\bo_R(t,u)$ is a polynomial, it attains both its rate and its slant. Assume $\rate(\bo_R)>1$ and $\slant(\bo_R)>1$. We apply \cref{l:inverse}(3) with $\Po=\bo _R(t,u)$ to conclude $\slant(\bo_R^{-1})=\slant(\bo_R)>1$ and $\rate(\bo_R^{-1})=\rate(\bo_R)>1$. Then we apply  \cref{QP}(2) with $\Qo=(1+tu)^n$ and $\Po=\bo _R(t,u)^{-1}$, noting that $\slant(\Qo)=\rate(\Qo)=1$. 
\end{proof}

\begin{remark}
\label{golodgolod}
The hypothesis of \cref{denom-poly} holds when there exist surjective graded Golod homomorphisms $P\to R'$ and $R'\to R$ with $P$ a graded complete intersection ring, or $R$ is defined by a monomial ideal. Indeed, see \cref{Golod-def} for the first two cases and \cite{berglund2006poincare} for the monomial case. 
\end{remark}

We now use \cref{denom-poly} to give bounds on slant and rate, using Serre's inequality. 

\begin{theorem}
\label{t:upperb}
For each $k$ with $0\le k<\pd_SR$ the following inequalities hold: 
\begin{align}
\label{t1}
\rate(R)&\le \max\Big\{\frac{t_i^S(R)-1}{i} \st 1\le i\le \pd_SR \Big\}\\
\label{t2}
&\le \max\Big\{1+\frac{\reg_S(R)-1}{k+1}, \frac{t_i^S(R)-1}{i} \st 1\le i\le k \Big\}\\
\label{t3}
\slant(R)&\le \max\Big\{\frac{t_{i}^S(R)}{i+1} \st 1\le i\le \pd_SR \Big\}\\
\label{t4}
&\le \max\Big\{1+\frac{\reg_S(R)-1}{k+2}, \frac{t_{i}^S(R)}{i+1} \st 1\le i\le k \Big\}.
\end{align}
Equalities hold in \eqref{t1} and \eqref{t3}  when $R$ is a Golod ring.  
\end{theorem}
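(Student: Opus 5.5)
The plan is to compare $R$ with a Golod ring sharing the same Betti numbers over $S$, using Serre's inequality. Serre's coefficientwise inequality gives
\[
\Po_\kk^R(t,u)\le \frac{(1+tu)^n}{1-t(\Po_R^S(t,u)-1)},
\]
with equality exactly when $R$ is Golod. Since the Betti numbers are nonnegative, a coefficientwise inequality of power series with nonnegative coefficients gives $t_i(R)\le t_i(\mathrm G)$ for every $i$, where $\mathrm G$ denotes the right-hand side. Hence $\rate(R)\le\rate(\mathrm G)$ and $\slant(R)\le\slant(\mathrm G)$, with equality when $R$ is Golod.

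The right-hand side has the form $(1+tu)^n/\bo(t,u)$ with
\[
\bo(t,u)=1-\sum_{i=1}^{\pd_SR}\Big(\sum_j\beta^S_{i,j}(R)u^j\Big)t^{i+1}.
\]
This is a polynomial in $\mathcal P$: its $t^{i+1}$ coefficient has degree $t_i^S(R)\ge i+1$ for $i\ge1$, and it has no $t^1$ term. Moreover there is no cancellation, since all coefficients appear with the same sign. The argument of \cref{denom-poly} applies verbatim to such a series; it only uses \cref{l:inverse} and \cref{QP} together with $\slant((1+tu)^n)=\rate((1+tu)^n)=1$. It therefore gives $\rate(\mathrm G)=\rate(\bo)$ and $\slant(\mathrm G)=\slant(\bo)$. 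The degenerate case $\rate(\bo)=1$ still yields upper bounds, since then $\mathrm G$ is linear and $1$ is at most each right-hand side. Reading off directly,
\[
\rate(\bo)=\max_i\frac{t_i^S(R)-1}{(i+1)-1}\qand \slant(\bo)=\max_i\frac{t_i^S(R)}{i+1}.
\]
This proves \eqref{t1} and \eqref{t3}, together with equality in the Golod case.

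For \eqref{t2} and \eqref{t4}, split the maxima into the range $i\le k$ and the range $i\ge k+1$. For $i\ge k+1$, use $t_i^S(R)\le \reg_S(R)+i$ to get
\[
\frac{t_i^S(R)-1}{i}\le 1+\frac{\reg_S(R)-1}{i}\le 1+\frac{\reg_S(R)-1}{k+1},
\]
using $\reg_S(R)\ge1$, which holds since $I\subseteq\m^2$. Similarly,
\[
\frac{t_i^S(R)}{i+1}\le 1+\frac{\reg_S(R)-1}{i+1}\le 1+\frac{\reg_S(R)-1}{k+2}.
\]
The condition $k<\pd_SR$ guarantees that this range is nonempty, though that is not even needed for an upper bound.

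The main obstacle is justifying the use of \cref{denom-poly} for the series $\mathrm G$, which is not a priori the Poincaré series of $R$. The proposition's argument is purely formal in $\mathcal P$, so I would simply rerun it. The other delicate point is the passage from the coefficientwise inequality to the comparison of top degrees, and this is where nonnegativity is used.
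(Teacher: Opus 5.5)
Your proposal is correct and follows essentially the paper's proof: Serre's inequality, reading the right-hand sides of \eqref{t1} and \eqref{t3} as $\rate(\bo)$ and $\slant(\bo)$ for the polynomial denominator, and transferring these to the fraction by rerunning the argument of \cref{denom-poly}. The only difference is organizational: for \eqref{t2} and \eqref{t4} you bound the tail $i\ge k+1$ in one step using $t_i^S(R)\le\reg_S(R)+i$ and $\reg_S(R)\ge 1$, whereas the paper compares the right-hand sides for consecutive values of $k$ and shows each is at most the previous one.
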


\begin{proof}
    We use the Serre inequality 
    \begin{equation}
    \label{fraction}
    \Po_\kk^R(t,u)\le \frac{\Po_\kk^S(t)}{1-t(\Po_R^S(t,u)-1)}=\frac{(1+tu)^n}{1-t(\Po_R^S(t,u)-1)}\,.
    \end{equation} 
Set $\bo _R(t,u)$ to be the denominator of the fraction above, noting that $\bo_R\in \mathcal P$. Observe that $t_1(\bo _R)=-\infty$ and $t_{i+1}(\bo _R)=t_i^S(R)$ for all $i\ge 1$. Thus, the expressions on the right in \cref{t1}  and \cref{t3} are exactly the slope, respectively, slant, of $\bo _R$, which according to the computation in the proof \cref{denom-poly} are also the slope, respectively, slant, of the fraction in \eqref{fraction}. The inequalities in \cref{t1} and \cref{t3} follow thus from the Serre inequality.

Set $d=\pd_SR$ and let $0\le k<d$.  Let $A$ denote the right-hand side of \eqref{t1} and $A_k$ denote the right-hand side of \eqref{t2}. To show the inequality in \eqref{t2} we need to show that $A\le A_k$. To see this, we argue below that $A\le A_{d-1}$ and $A_u\le A_{u-1}$ for all $u$ with $1\le u\le d-1$. 

Observe
\begin{equation}
\label{3ineq}
\frac{t_{i}^S(R)-1}{i}\le \frac{\reg_R(S)+i-1}{i}=1+\frac{\reg_R(S)-1}{(i-1)+1}
\end{equation}
for all $i$ with $1\le i\le d$. 
With $i=d$, \eqref{3ineq} shows $A\le A_{d-1}$. Now let $1\le u\le d-1$ and observe 
\begin{equation}
\label{ureg}
\frac{\reg_R(S)-1}{u+1}<\frac{\reg_R(S)-1}{u}.
\end{equation}
Combining \eqref{ureg} and \eqref{3ineq} with $i=u$, we see that $A_u\le A_{u-1}$.

Now let $B$ denote the right-hand side of \cref{t3} and $B_k$ denote the right-hand side of \cref{t4}. 
To show the inequality in \eqref{t4} we need to show that $B\le B_k$. To see this, we argue below that $B\le B_{d-1}$ and $B_u\le B_{u-1}$ for all $u$ with $1\le u\le d-1$. 
Observe 
\begin{equation}
\label{moreineq}
\frac{t_{i+1}^S(R)}{i+2}\le \frac{\reg_R(S)+i}{i+1}=1+\frac{\reg_R(S)-1}{(i-1)+2}
\end{equation}
for all $i$ with $1\le i\le d$. 
With $i=d$, \eqref{moreineq} justifies $B\le B_{d-1}$. Now let $1\le u\le d-1$ and observe
\begin{equation}
\label{moreineq2}
\frac{\reg_R(S)-1}{u+2}\le\frac{\reg_R(S)-1}{u+1} 
\end{equation}
Combining \eqref{moreineq2} and \eqref{moreineq} with $i=u$, we see that $B_u\le B_{u-1}$. 
\end{proof}

\cref{t:upperb} can be used to identify rings of minimal rate/slant, as seen below. 

\begin{corollary}
\label{c:minimal}
The ring $R$ has minimal rate under any of the following assumptions:
\begin{enumerate}
\item The maximal shifts of $R$ satisfy
\[
t_i^S(R)-1\le (m(I)-1)i \qforall 2\le i\le \pd_RS.
\]
\item For some $0\le k<\pd_SR$, we have 
\begin{align*}
\reg_S(R)&\le (k+1)m(I)-2k-1\qand t_i^S(R)-1\le (m(I)-1)i
\qforall 2\le i\le k.
\end{align*}
\end{enumerate}

The ring $R$ has minimal slant under any of the following assumptions 

\begin{enumerate}
\item[(3)] The maximal shifts of $R$ satisfy
\[
t_i^S(R)\le \frac{i+1}{2}m(I) \qforall 2\le i\le \pd_RS.
\]
\item[(4)] For some $0\le k<\pd_SR$, we have 
\begin{align*}
\reg_S(R)&\le \frac{k+2}{2}m(I)-k-1\qand
t_i^S(R)\le \frac{i+1}{2}m(I) \qforall 2\le i\le k.
\end{align*} 
\end{enumerate}
\end{corollary}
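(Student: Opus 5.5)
Each item follows by feeding its hypothesis into the matching bound of \cref{t:upperb}, then using the lower bounds \eqref{less}, namely $\rate(R)\ge m(I)-1$ and $\slant(R)\ge m(I)/2$, to force equality. Throughout we use $t_1^S(R)=m(I)$, since the first syzygies of $R$ over $S$ are the minimal generators of $I$. We also use $t_i^S(R)\ge t_1^S(R)+i-1$, by minimality of the $S$-resolution.

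\emph{Item (1).} By \eqref{t1}, $\rate(R)$ is at most the maximum of $(t_i^S(R)-1)/i$ over $1\le i\le \pd_SR$. For $i=1$ this quantity is $m(I)-1$. For $i\ge2$ the hypothesis gives $(t_i^S(R)-1)/i\le m(I)-1$. Hence $\rate(R)\le m(I)-1$, and equality follows from \eqref{less}.

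\emph{Item (3).} This is identical, using \eqref{t3}. The $i=1$ term is $m(I)/2$, and the hypothesis bounds $t_i^S(R)/(i+1)$ by $m(I)/2$ for $i\ge2$. So $\slant(R)\le m(I)/2$, and equality follows from \eqref{less}.

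\emph{Item (2).} Use \eqref{t2} with the given $k$. The terms $(t_i^S(R)-1)/i$ for $1\le i\le k$ are handled as in (1), with the $i=1$ term equal to $m(I)-1$. For the remaining term, the assumption $\reg_S(R)\le (k+1)m(I)-2k-1$ gives $\reg_S(R)-1\le (k+1)(m(I)-2)$. Hence
\[
1+\frac{\reg_S(R)-1}{k+1}\le m(I)-1.
\]
The case $k=0$ is included: then only the regularity term and no $t_i^S$ terms appear (the $i=1$ term is absent, but harmless).

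\emph{Item (4).} Use \eqref{t4}. The hypothesis $\reg_S(R)\le \frac{k+2}{2}m(I)-k-1$ gives $\reg_S(R)-1\le (k+2)\bigl(\frac{m(I)}{2}-1\bigr)$. Hence
\[
1+\frac{\reg_S(R)-1}{k+2}\le \frac{m(I)}{2}.
\]
The other terms are bounded as in (3), and \eqref{less} gives equality.

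The only step needing care is the arithmetic rearrangement of the regularity hypotheses. There is also a bookkeeping point: the index ranges in the statement are written with $\pd_RS$, which should be read as $\pd_SR$, matching \cref{t:upperb}. Neither is a genuine obstacle.
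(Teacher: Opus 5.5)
Your proof is correct and is exactly the intended argument. The paper gives no separate proof of this corollary and treats it as a direct consequence of \cref{t:upperb} combined with the lower bounds \eqref{less}; your rearrangements $\reg_S(R)-1\le (k+1)(m(I)-2)$ and $\reg_S(R)-1\le (k+2)\bigl(\frac{m(I)}{2}-1\bigr)$ are the only computations needed, and you are right that $\pd_RS$ should read $\pd_SR$.

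One correction: delete the claim that $t_i^S(R)\ge t_1^S(R)+i-1$ ``by minimality''. Minimality only forces the \emph{smallest} shifts to increase, and the claim is false for maximal shifts: for $I=(x_1x_2,x_1x_3,x_1x_4,x_2^{10})\subseteq\kk[x_1,\dots,x_4]$ one has $t_1^S(R)=10$ but $t_3^S(R)=4$. Fortunately you never use it, so the argument is unaffected.
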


\section{Rings with minimal slant}

As mentioned in \cref{s:prelims}, several classes of rings are known to have minimal rate, and this property can be understood as a generalization of the Koszul property. Since minimal slant implies minimal rate, but not conversely, minimal slant is a better refinement of homological behavior that is close to Koszulness. In this section, we identify classes of rings that exhibit this behavior. A first class was already established in \cref{p:12}. We continue with a study of rings defined by general symmetric ideals and compressed level algebras, using the results of the previous section together with existing literature on these classes of rings. 

In \cite[Theorem 7.10]{AS}, with the terminology defined therein, the authors show that if the ideal $I$ defining $R$ is an {\it $(r, \tau)$-general symmetric ideal} then $R$ is Koszul when $\tau=2$ and when $\tau>2$ it satisfies the following property: 
\begin{align}
\begin{split}
\label{symmetric-betti}
 \quad \beta_{i,j}^S(R)=0  \qif &j\notin \{i+\tau, i+\tau-1\}\qand i\ge 1,  \qor\\
&j=i+\tau \qand 1\le i\le n-2. 
\end{split}
\end{align}
While we refer to \cite{AS} for the definition of such ideals, we note here that $r$ denotes the number of minimal generators up to symmetry and $\tau$ is the degree of these generators. 

We show that such rings have minimal rate and slant. 

\begin{corollary}
\label{gensym}
Let $r\ge 1$ and $\tau\ge 2$ be two integers. If $R$ is defined by an $(r,\tau)$-general symmetric ideal $I$ of $S$, then $R$ has minimal rate and slant. 
\end{corollary}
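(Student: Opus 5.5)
The plan is to apply \cref{c:minimal}(3), which gives minimal slant; minimal rate then follows from \eqref{e:slant-rate}. If $\tau=2$, then $R$ is Koszul by \cite[Theorem 7.10]{AS}, so $\rate(R)=\slant(R)=1=m(I)-1=m(I)/2$ and there is nothing to prove. So assume $\tau\ge 3$. Then $I$ is generated in the single degree $\tau$, so $m(I)=\tau$, and \eqref{symmetric-betti} is available.

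First I will read off the maximal shifts. By \eqref{symmetric-betti}, for every $i\ge 1$ the only possibly nonzero $\beta^S_{i,j}(R)$ occur at $j\in\{i+\tau-1,\,i+\tau\}$. Hence $t_i^S(R)\le i+\tau$ for all $i\ge 1$, and $t_i^S(R)\le i+\tau-1$ when $1\le i\le n-2$. Since $\pd_SR\le n$, the indices $i$ with $2\le i\le \pd_SR$ and $i\ge n-1$ are only $i=n-1$ and $i=n$.

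Next I will verify the inequality $t_i^S(R)\le \frac{i+1}{2}\tau$ required in \cref{c:minimal}(3), for $2\le i\le\pd_SR$. For $2\le i\le n-2$ it suffices that $i+\tau-1\le \frac{(i+1)\tau}{2}$. This is equivalent to $(i-1)(\tau-2)\ge 0$, which holds. For $i\in\{n-1,n\}$ it suffices that $i+\tau\le\frac{(i+1)\tau}{2}$, which is equivalent to $(i-1)(\tau-2)\ge 2$. For $\tau\ge 3$ this holds as soon as $i\ge 3$.

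The only delicate point is the edge case where $i=n-1$ or $i=n$ equals $2$, that is, $n\le 3$. There the inequality may fail when $\tau=3$ and $i=2$. To handle it I will use that $t_2^S(R)\le 2\tau$ always holds, since syzygies of the first order of an ideal generated in degree $\tau$ have degree at most $2\tau$. Because $t_2^S(R)$ is the largest degree of a minimal first syzygy, this gives $t_2^S(R)\le 2\tau$. We need $t_2^S(R)\le \frac{3\tau}{2}$, so this bound is not enough by itself. I would therefore check the small cases directly from \eqref{symmetric-betti}: when $n\le 3$, the value $i=2$ lies in the range $i\ge n-1$ only if $n\le3$. Also, a symmetric ideal in few variables that is generated in degree $\tau$ is either a complete intersection or has a Hilbert–Burch resolution, and in those cases $t_2^S(R)\le 2\tau-1$ or $t_2^S(R)=\tau+1$. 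If this edge case cannot be settled this way, I would instead invoke the hypothesis $n\ge 2$ and the structure in \cite{AS}, which presumably forces $n$ large enough for nontrivial $(r,\tau)$-symmetric ideals. In all other cases \cref{c:minimal}(3) gives minimal slant, and minimal rate follows.
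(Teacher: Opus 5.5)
Your setup matches the paper's. You dispose of $\tau=2$ by Koszulness. You read off $t_i^S(R)\le i+\tau-1$ for $1\le i\le n-2$ and $t_i^S(R)\le i+\tau$ for $i\in\{n-1,n\}$ from \eqref{symmetric-betti}. You then verify the hypothesis of \cref{c:minimal}(3) through the $(i-1)(\tau-2)$ computations, which are correct. You also correctly isolate the only problematic case, $\tau=3$ with $i=2\in\{n-1,n\}$.

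But you never actually resolve that case, and this is a genuine gap. For $n=3$ and $\tau=3$, the needed inequality $t_2^S(R)\le \frac{9}{2}$ is, given \eqref{symmetric-betti}, equivalent to $\beta^S_{2,5}(R)=0$. Since $i=2=n-1$, \eqref{symmetric-betti} says nothing about this Betti number. None of your fallbacks closes the gap:
\begin{enumerate}
\item The bound $t_2^S(R)\le 2\tau$ is not a general fact for non-monomial ideals; it comes from the Taylor resolution in the monomial case. It would be too weak anyway.
\item The dichotomy ``complete intersection or Hilbert--Burch'' is unjustified. For $n=3$ the ideal is generated by full $S_3$-orbits of general cubics and is typically $\m$-primary with $\pd_SR=3$, so Hilbert--Burch is unavailable.
\item A complete intersection of at least two forms of degree $\tau$ has $t_2^S(R)=2\tau$, not $\le 2\tau-1$ as you assert. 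This is in fact incompatible with \eqref{symmetric-betti} when $\tau\ge 3$.
\item The case $n=3$ is not ruled out by the hypotheses; the paper has to treat it explicitly.
\end{enumerate}

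The missing input is the explicit Betti number computation in \cite[Theorem 7.2]{AS}. When $\tau=3$,
\[
\beta^S_{2,5}(R)=\max\{P(3)-P(2)-r,0\},
\]
where $P(d)$ is the number of partitions of $d$ into at most $n$ parts. For $n=3$ one has $P(3)=3$ and $P(2)=2$, so $\beta^S_{2,5}(R)=\max\{1-r,0\}=0$ because $r\ge 1$. Combined with \eqref{symmetric-betti}, this gives $t_2^S(R)\le 4\le \frac{3}{2}m(I)$. Then \cref{c:minimal}(3) applies exactly as you planned, and minimal rate follows from minimal slant. For $n\ge 4$ no extra input is needed: there $2\le n-2$, so \eqref{symmetric-betti} already gives $t_2^S(R)\le \tau+1=4$.
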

\begin{proof}
We have $m(I)=\tau$. As noted above, if $\tau=2$ then $R$ is Koszul. Assume now $\tau\ge 3$. Condition \eqref{symmetric-betti} gives
\[
t_i^S(R)\le \begin{cases} i+\tau &\qif i\in \{n-1,n\}
\\
 i+\tau-1 &\qif   1\le i\le n-2
\end{cases}
\]
and hence
\[
\frac{t_i^S(R)}{i+1}\le \frac{\tau+i}{i+1}\le \frac{\tau}{2}=\frac{m(I)}{2}  \qif \tau>3 \qor i>2.
\] 
In order to apply \cref{c:minimal}(3) to conclude that $\slant(R)$ is minimal we want to further argue that an inequality between the first and the last term above holds also when $\tau=3$ and $i=2$, even if the second inequality above does not hold. 

Assume thus $\tau=3$ and $i=2$. If $n\ge 4$, then $t_2(R)= 2+3-1=4$ and the inequality becomes $4/3\le 3/2$, a true statement. Assume thus $n=3$.  In this case, we want to argue that we also have $t_2(R)\le 4$, or equivalently $\beta_{2,2+\tau}^S(R)=0$. According to \cite[Theorem 7.2]{AS}, when $\tau=3$ we have 
\[
\beta_{2,5}(R)=\max\{P(3)-P(2)-r,0\}
\]
where $P(d)$ denotes the number of partitions of $d$ with at most $n=3$ parts. Observing that $P(3)=3$ and $P(2)=2$, we conclude that $\beta_{2,5}(R)=0$ in this case. Thus, we can apply  \cref{c:minimal}(3) to conclude that $R$ has minimal slant. Finally, as noted in \cref{s:prelims}, minimal slant implies minimal rate. 
\end{proof}

We now discuss compressed level algebras, whose definition we recall next. If $R$ is Artinian, then $R$ is said to be {\it level} if its socle is generated in a single degree. 
Assume $R$ is an Artinian level algebra quotient of $S$ with $s$-dimensional socle in degree $\rho$. 
Then $R$ is {\it compressed} if its Hilbert function is given by 
\begin{equation}
\label{compressed-def}
\dim_\kk (R_d) = \min\{\dim_{\kk} (S_d), s\dim_{\kk} (S_{\rho-d})\}\,.
\end{equation}
The {\it initial degree} of a homogeneous ideal $I\subseteq S_{\ge 2}$ is the smallest degree of a minimal generator of $I$.  We will also refer to it as the initial degree of the ring $R=S/I$. When $R$ is level compressed, its initial degree can be read from \cref{compressed-def}: it is the smallest $d$ for which $\dim_\kk(S_d)>s\dim_\kk(S_{\rho-d})$.

\begin{corollary}
\label{c:compressed}
    Assume $R=S/I$ an Artinian compressed level with initial degree $\tau$. 
\begin{enumerate}
\item If $n=3$ and $\tau\ge 4$, or $n\ge 4$ and $\tau\ge 3$, then $R$ has minimal rate. 
\item If $n=4$ and $\tau\ge 6$, or $n\ge 5$ and $\tau\ge 4$, then $R$ has minimal slant. 
\end{enumerate}
\end{corollary}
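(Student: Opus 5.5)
The plan is to apply \cref{c:minimal} with $k=1$, using the known shape of the graded Betti table of a compressed level algebra over $S$. Since $I$ is generated in degrees $\tau$ and possibly $\tau+1$ (the generic compressed behavior), $m(I)\le\tau+1$. We also have $m(I)\ge \tau$, and the lower bound is what we will use in the target inequalities. The socle degree determines the regularity: $\reg_S(R)=\rho$, since $R$ is Artinian with top degree $\rho$. The compressed Hilbert function \eqref{compressed-def} gives $\rho\le 2\tau-1$, or more precisely roughly $\rho\le 2\tau-1$ with $\tau-1\le \rho-\tau+1$. The reason is that $\tau$ is the smallest $d$ with $\dim S_d> s\dim S_{\rho-d}$, and $d=\tau-1$ fails this. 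For $d\le\rho/2$ the inequality $\dim S_d\le s\dim S_{\rho-d}$ is automatic, so $\tau\ge \lceil(\rho+1)/2\rceil$, hence $\rho\le 2\tau-1$.

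Next, invoke \cref{c:minimal}(2) with $k=1$, so that no condition on the intermediate $t_i^S$ is needed. The requirement becomes $\reg_S(R)\le 2m(I)-3$. With $\reg_S(R)=\rho\le 2\tau-1$ and $m(I)\ge\tau$, this needs $2\tau-1\le 2\tau-3$, which is false. So $k=1$ is insufficient, and we pass to larger $k$, which brings in the intermediate shifts. Here I would use the known result (e.g. from the work on compressed level algebras, Boij or Fr\"oberg–Laksov type results) that for a compressed level algebra the resolution is as linear as possible in the early steps: $t_i^S(R)\le \tau+i$ for $i<n$, with linear strands only in degrees $\tau+i-1$ and $\tau+i$. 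Then the condition $t_i^S(R)-1\le(m(I)-1)i$ reads $\tau+i-1\le(\tau-1)i$, true for $i\ge2$ when $\tau\ge 3$. The regularity condition with $k=n-1$ is $\rho\le n m(I)-2n+1$, which follows from $\rho\le2\tau-1$ once $n\tau-2n+1\ge 2\tau-1$, that is, $(n-2)(\tau-2)\ge 0$ with some slack. I would adjust $k$ (say $k=2$ or $k=3$) to match exactly the thresholds $n=3,\tau\ge4$ and $n\ge4,\tau\ge3$. Since $k<\pd_S R=n$ forces $k\le n-1$, this explains why $n=3$ needs larger $\tau$: for $n=3$ we take $k=2$ and need $2\tau-1\le 3\tau-5$, giving $\tau\ge4$. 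For $n\ge4$ we take $k=3$ and need $2\tau-1\le 4\tau-7$, giving $\tau\ge3$, together with $\tau+i-1\le(\tau-1)i$ for $i=2,3$.

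For slant, run the same argument with \cref{c:minimal}(4). The intermediate condition is $\tau+i\le \frac{i+1}{2}\tau$, which holds for $i\ge2$ when $\tau\ge 2+\frac{4}{i-1}$; at $i=2$ this requires $\tau\ge6$, while using the finer bound $t_2^S\le\tau+1$ (which should hold when $n$ is large relative to the generator degrees) relaxes it. The regularity condition is $2\tau-1\le\frac{k+2}{2}\tau-k-1$. For $n\ge5$, take $k=4$: this gives $2\tau-1\le3\tau-5$, so $\tau\ge4$, and the intermediate conditions for $i=2,3,4$ become $\tau+1\le 3\tau/2$, $\tau+2\le2\tau$, and $\tau+3\le 5\tau/2$, all fine for $\tau\ge4$ if $t_2^S\le\tau+1$. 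For $n=4$, $k=3$ gives $2\tau-1\le \frac52\tau-4$, so $\tau\ge6$.

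The main obstacle is pinning down from the literature the precise vanishing of Betti numbers $\beta^S_{i,\tau+i}(R)$ for small $i$ relative to $n$. The paper will cite this; I expect it to be a result saying the resolution of a compressed level algebra has $\beta_{i,\tau+i}=0$ for $i$ up to about $n-2$ under these degree bounds, analogous to \eqref{symmetric-betti}. I would verify that this vanishing range covers $i\le k$ in each case, which is what produces the stated thresholds.
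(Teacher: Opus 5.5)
Your strategy is the paper's. You feed three facts into \cref{c:minimal}: Boij's two-strand theorem ($\beta^S_{i,j}(R)\ne 0\Rightarrow j\in\{\tau+i-1,\tau+i\}$ for $1\le i\le n-1$, hence $t_i^S(R)\le\tau+i$), $m(I)\ge\tau$, and $\reg_S(R)=\rho\le 2\tau-1$. The paper invokes parts (1) and (3) and handles $i=n$ through $t_n^S(R)=n+\rho$. Since $1+\frac{\rho-1}{n}=\frac{t_n^S(R)-1}{n}$ and $1+\frac{\rho-1}{n+1}=\frac{t_n^S(R)}{n+1}$, these are literally your parts (2) and (4) with $k=n-1$. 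Taking smaller $k$ when $n$ is large is a harmless variant that gives the same thresholds. Your rate argument is correct, with one small fix: at $k=n-1$ the regularity condition is $(n-2)(\tau-2)\ge 2$, not merely $\ge 0$, and this is what forces $\tau\ge 4$ when $n=3$.

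The slant part, as written, has a gap caused by an arithmetic slip.

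\begin{itemize}
\item The inequality $\tau+i\le\frac{i+1}{2}\tau$ is equivalent to $\tau\ge\frac{2i}{i-1}=2+\frac{2}{i-1}$. At $i=2$ it needs only $\tau\ge 4$, not $\tau\ge 6$.
\item Because of the slip, you make the case $n\ge 5$, $\tau\in\{4,5\}$ depend on $t_2^S(R)\le\tau+1$. Your list for $i=2,3,4$ in fact uses $t_i^S(R)\le\tau+i-1$.
\item Neither bound follows from Boij's theorem, and both fail in general. Take $S=\kk[x_1,\dots,x_5]$ and $I=(f)+\m^5$ with $f$ a nonzero quartic. Then $R$ is compressed level with $\rho=\tau=4$ and $s=69$. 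The ideal $I$ has $121$ quintic minimal generators and no syzygies in degree $5$, so $\beta^S_{2,6}(R)=5\cdot 121+15-\dim_\kk S_6=410\ne 0$, that is, $t_2^S(R)=\tau+2$.
\item For the same reason, your closing expectation that the argument needs vanishing of $\beta^S_{i,\tau+i}(R)$ is misplaced.
\end{itemize}

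The repair is immediate. With the correct bound $t_i^S(R)\le\tau+i$, the conditions for $i=2,3,4$ are $\tau+2\le\frac32\tau$, $\tau+3\le 2\tau$ and $\tau+4\le\frac52\tau$, all valid for $\tau\ge 4$. So \cref{c:minimal}(4) applies with $k=4$ when $n\ge 5$, and with $k=3$ when $n=4$ and $\tau\ge 6$. This matches the paper's computation, and no extra vanishing is needed.
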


\begin{proof}
By \cite{boij}, we know that for $1\le i\le n-1$, 
\[
\beta_{i,j}^S(R)\ne 0 \implies j \in\{ \tau+i-1, \tau+i\}. 
\]
In particular, 
$
\tau-1\le m(I)-1\le \tau$ and   $t_i^S(R)\le \tau+i$ for all $1\le i\le n-1
$. When $3\le \tau$ and $2\le i<n$,
we have 
\[
\frac{t_i^S(R)-1}{i}\le \frac{\tau+i-1}{i}\le \tau-1\le m(I)-1.
\]
 Taking $d=\tau$ in \cref{compressed-def}, we see that we must have $\dim(S_{\rho-\tau})<\dim(S_{\tau})$, implying $\rho\le 2\tau-1$. Since $t_n^S(R)=n+\rho$, it follows that $t_n^S(R)\le n+2\tau-1$, and, under the assumptions of (1), 
\[
\frac{t_n^S(R)-1}{n}\le \frac{2\tau-2+n}{n}\le \tau-1\le m(I)-1\,.
\]
Then the conclusion of (1) follows from \cref{c:minimal}(1). 

To prove (2), note that for $\tau\ge 4$ we have 
\[
\frac{t_i^S(R)}{i+1}\le \frac{\tau+i}{i+1}\le \frac{\tau}{2} \le \frac{m(I)}{2}.
\]

Also, under the assumptions of (2)
\[
\frac{t_n^S(R)}{n+1}\le \frac{2\tau+n-1}{n+1}\le \frac{\tau}{2} \le \frac{m(I)}{2}.
\]
The conclusion of (2) follows from \cref{c:minimal}(3).
\end{proof}

One can further attempt to understand the remaining cases that are not covered in \cref{c:compressed}, namely when $\tau$ is small. 
For Gorenstein compressed rings we have more information. 
It is known from \cite{boij2025rate} that the rate of a Gorenstein compressed quotient of $S$ is minimal when its socle degree $\rho$ satisfies $\rho\ne 3$. Furthermore, when $\rho=3$ (implying $\tau=2$),  \cite[Theorem 6.3]{CRV} shows that the same conclusion holds (namely $R$ is Koszul) if one assumes that the ring is defined via the Macaulay inverse system by a generic form. On the other hand, there exist graded Gorenstein quadratic rings $R$ with $\rho=3$ (implying $R$ is compressed) which are not Koszul, and thus do not have minimal rate or slant, see for example \cite{MS}.  Further information on slant for Gorenstein compressed rings, that covers cases not previously addressed can be obtained in the Gorenstein case using the formula for $\Po_\kk^R(t,u)$ established in \cite{boij2025rate} and \cref{denom-poly}.

\section{Rate and slant for rings defined by generic forms}

There are several ways in which a collection of homogeneous polynomials in a polynomial ring over a field can be understood as being ``generic''; we will make such definitions clear below. Imposing generic assumptions is a classical theme in commutative algebra, as they often yield uniform, quantifiable and even extremal behavior. In this section we make the case that these phenomena also hold or can be expected to hold in our context. 

We say that $R=S/I$ is a quotient of $S$ of type $(N; d_1,\ldots ,d_N)$ when $I$ can be minimally generated by forms $f_1, \dots, f_N$ with $\deg(f_i)=d_i$ for all $i$, and $2\le d_1\le d_2\le \dots \le d_N$. 
When $d_1=d_2=\cdots =d_N=d$, we use the simplified notation $(N;d)$ for the type. The forms $f_1, \dots, f_N$ are said to have {\it generic coefficients} if the set of all coefficients of these polynomials is algebraically independent over the prime field of $\kk$.

If $f_i$ are as above, consider the product of projective spaces $\mathbb P=\prod_{1\le i\le N}\mathbb P(R_{d_i})$ parametrizing the coefficients of $f_1, \dots, f_N$; when $d_1=\dots=d_N=d$ one may also consider $\mathbb P$ to be $\text{Grass}(R_d,N)$. We say that a {\it general quotient of $S$ of type} $(N;d_1,\dots, d_N)$, satisfies a property $(*)$ when there exists a nonempty open subset $U$ of the parameter space $\mathbb P$ such that, for each choice of $f_1, \dots, f_N$  corresponding to a point in $U$, the ring $S/(f_1, \dots, f_N)$ satisfies $(*)$. 

 A famous conjecture of Fr\"oberg \cite{Froberg-85} predicts that the Hilbert series of a general quotient of $S$ of type $(N;d_1,\dots, d_N)$ is 
 \begin{equation}
 \label{Froberg}
 \Big[\frac{\prod_{i=1}^N(1-z^{d_i})}{(1-z)^n}\Big]
 \end{equation}
 where $[\Po]$ denotes the truncation at the first non-positive term of a power series $\Po$. 
 Using the terminology of \cite{pardue2009syzygies}, a sequence of elements $f_1, \ldots, f_N\in S$ of degrees $d_1, \ldots, d_N$ is called a {\it semi-regular sequence} if the Hilbert series of $S/(f_1, \ldots, f_N)$ is given by (\ref{Froberg}). In this terminology, Fr\"oberg's conjecture states that if $R=S/I$ is a general quotient of type $(N; d_1, \ldots, d_N)$ then $I$ is generated by a semi-regular sequence.

\begin{chunk}
    \label{interval}
The following results are known for quotients defined by quadrics, see (\cite[Theorem 7.1]{froberg2002koszul}):
\begin{enumerate}
\item A general quotient of $S$ of type $(N;2)$ is Koszul when $N\le n$ and is not Koszul when $n<N<\frac{n^2}{4}+\frac{n}{2}$. 
\item A quotient $S/(f_1, \dots, f_N)$ of type $(N;2)$ in  which the forms $f_1, \dots, f_N$ have generic coefficients is Koszul if and only if $N\le n$ or $N\ge \frac{n^2}{4}+\frac{n}{2}$. 
\end{enumerate}
\end{chunk}

Recall that for quotients defined by quadrics, the Koszul property is equivalent to minimal rate. Motivated by the results mentioned above, our goal is to further study rate and slant of general quotients of $S$, with two goals in mind:
\begin{itemize}
\item Understand when minimal rate/slant occurs in non-quadratic cases;
\item Describe the behavior of rate/slant in the non-Koszul quadratic cases when the inequalities  $n<N<\frac{n^2}{4}+\frac{n}{2}$ hold.   
\end{itemize}

If $N<n$, then a general quotient of $S$ of type $(N; d_1, \dots, d_N)$ is a complete intersection, and as such it has minimal slant by \cref{p:12}. A first step towards understanding rate for non-Koszul quadratic general quotients is made in \cite{diethorn}, where the case $N=n+1$ is considered.

In what follows, given a type $(N;d_1, \dots, d_N)$, we consider the following condition: 
\[
(\#) \quad \text{The first nonpositive coefficient in $\frac{\prod_{i=1}^N(1-t^{d_i})}{(1-t)^n}$ is zero.}
\]
\begin{theorem}
\label{p:genrateslant}
Assume $R=S/I$ is of type $(N;d_1, \dots, d_N)$ with $d_i\ge 2$ for all $i$. Let $\rho=\reg_S(R)$. Then the following hold: 
\begin{enumerate}
\item 
If $\rho\le  2m(I)-3$,  then $R$ has minimal rate.
\item If $\rho>2m(I)-3$, $N>n$ and $I$ is generated by a semi-regular sequence, then
\[
\frac{\rho}{2}\le \rate(R)\le \frac{\rho+1}{2}\,.
\]
Furthermore,  
\begin{enumerate}
     \item If $R$ has minimal rate, then $\rho=2m(I)-2$. 
    \item If $(\#)$ holds, then $\rate(R)=\frac{\rho+1}{2}$. 
    \item If $\beta_2^S(R)_{\rho+2}=0$ and $\rho\ge 4$, then $\rate(R)=\frac{\rho}{2}$.
\end{enumerate}
\item If $2\rho\le 3m(I)-4$, then $R$ has minimal slant. 
\item If $2\rho> 3m(I)-4$, $N>n$ and $I$ is generated by a semi-regular sequence, then
\[
\frac{\rho+1}{3}\le \slant(R)\le \frac{\rho+2}{3}\,.
\]
Furthermore, 
\begin{enumerate}
\item If $R$ has minimal slant, then $2\rho=3m(I)-2$ or $2\rho=3m(I)-3$. 
    \item If $(\#)$ holds, then $\slant(R)=\frac{\rho+2}{3}$. 
    \item If $\beta_2^S(R)_{\rho+2}=0$ and $\rho\ge 5$, then $\slant(R)=\frac{\rho+1}{3}$.
\end{enumerate}
\end{enumerate}
\end{theorem}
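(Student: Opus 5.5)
The plan is to read off the minimal cases (1) and (3) from \cref{c:minimal}, to get the upper bounds in (2) and (4) from \cref{t:upperb}, and to get the lower bounds from \cref{non-koszul}, which needs a non-Koszul syzygy of degree $\rho+1$ (or $\rho+2$). That lower-bound step is where I expect the main difficulty.

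\emph{Parts (1) and (3).} Apply \cref{c:minimal}(2) and (4) with $k=1$. The conditions on $t_i^S(R)$ for $2\le i\le k$ are then vacuous. The regularity conditions become $\rho\le 2m(I)-3$ and $\rho\le \frac32 m(I)-2$, i.e.\ $2\rho\le 3m(I)-4$. This gives (1) and (3).

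\emph{Upper bounds in (2) and (4).} Use \eqref{t2} and \eqref{t4} with $k=1$, recalling $t_1^S(R)=m(I)$. This gives
\[
\rate(R)\le\max\Big\{\frac{\rho+1}{2},\,m(I)-1\Big\} \qand \slant(R)\le \max\Big\{\frac{\rho+2}{3},\,\frac{m(I)}{2}\Big\}.
\]
The hypotheses $\rho\ge 2m(I)-2$, resp.\ $2\rho\ge 3m(I)-3$, make the first entry the larger one in each case.

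For (c) and (4)(c), use $k=2$ instead. The hypothesis $\beta_2^S(R)_{\rho+2}=0$, together with $t_2^S(R)\le \rho+2$, gives $t_2^S(R)\le\rho+1$. The terms coming from $i=2$ are then $\rho/2$ and $(\rho+1)/3$. The regularity terms are $1+\frac{\rho-1}{3}$ and $1+\frac{\rho-1}{4}$. A short computation shows $1+\frac{\rho-1}{3}\le \rho/2$ iff $\rho\ge 4$, and $1+\frac{\rho-1}{4}\le(\rho+1)/3$ iff $\rho\ge5$. These are exactly the stated thresholds.

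Parts (a) and (4)(a) follow from the bounds once the lower bounds below are in place.
\begin{itemize}
\item If the rate is minimal, then $m(I)-1\ge \rho/2$. Combined with $\rho>2m(I)-3$, this forces $\rho=2m(I)-2$.
\item If the slant is minimal, then $m(I)/2\ge(\rho+1)/3$, i.e.\ $2\rho\le 3m(I)-2$. Combined with $2\rho>3m(I)-4$, this gives the two listed values.
\end{itemize}

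\emph{Lower bounds.} By \cref{non-koszul}, it suffices to produce a non-Koszul syzygy of degree $\rho+1$, so that $t_0^S(\HH_1(K))\ge\rho+1$. This gives $\rate(R)\ge\rho/2$ and $\slant(R)\ge(\rho+1)/3$. Under $(\#)$ we want one of degree $\rho+2$, giving the values $(\rho+1)/2$ and $(\rho+2)/3$ and hence equality in (b) and (4)(b).

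Since $N>n$ and the sequence is semi-regular, $R$ is Artinian. Its Hilbert series is the truncation \eqref{Froberg}, so $\rho$ is its top degree and $D=\rho+1$ is the first degree where the coefficient $c_D$ of $\prod(1-z^{d_i})/(1-z)^n$ is nonpositive. I would compare the graded Euler characteristic of the Koszul complex $K$ on $f_1,\dots,f_N$ with $\prod(1-z^{d_i})/(1-z)^n$. Semi-regularity means the Koszul complex is exact, except at the end, in degrees $<D$. For $i\ge2$, the homology $\HH_i(K)$ should start in degrees beyond $D$; a resolution of the first nonvanishing homology is built from $\HH_1$.

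Then in degree $D$ we have $\dim \HH_1(K)_D-\dim\HH_0(K)_D=-c_D$ with $\HH_0(K)_D=R_D=0$. If $c_D<0$, this gives $\HH_1(K)_D\ne0$, and these are minimal non-Koszul syzygies since $\HH_1(K)$ vanishes below $D$. If $c_D=0$, which is case $(\#)$, I would push to degree $D+1$. The linear forms multiply the degree-$D$ part of the Koszul cycles, and the Hilbert function count of $S_{D+1}=I_{D+1}$ against the Koszul relations should force new generators of $\HH_1(K)$ in degree $\rho+2$.

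Making this vanishing of $\HH_i(K)$ for $i\ge2$ and the degree count in case $(\#)$ precise is the main obstacle. I would first try to derive it from Pardue's characterization of semi-regular sequences in \cite{pardue2009syzygies}.
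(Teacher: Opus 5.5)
You handle (1), (3), the upper bounds from \eqref{t2} and \eqref{t4}, and the deductions in (2)(a) and (4)(a) exactly as the paper does.

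\textbf{The gap is in the lower bounds, and it is decisive in case $(\#)$.} There you propose to exhibit minimal generators of $\HH_1(K)$ in degree $\rho+2$ by a count in degree $D+1$ that you only say ``should force'' them. That is not an argument, and it aims at more than is needed.

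The missing observation is that existence is automatic. Because $N>n$, the forms $f_1,\dots,f_N$ cannot be a regular sequence in $S$, so $\HH_1(K)\ne 0$. Consequently \cref{non-koszul} only needs the vanishing $\HH_1(K)_j=0$ for $j\le\rho$ (resp.\ $j\le\rho+1$ under $(\#)$). In other words, it suffices that every non-Koszul syzygy has degree at least $\rho+1$ (resp.\ $\rho+2$); then $t_0^S(\HH_1(K))\ge\rho+1$ (resp.\ $\rho+2$).

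The paper takes exactly this vanishing from \cite[Theorem 3.6]{pardue2009syzygies}. It therefore never needs your Euler-characteristic count, nor the vanishing of $\HH_i(K)_D$ for $i\ge 2$ that you leave unproved. In fact, your own identity $\dim\HH_1(K)_D=-c_D$ in degree $D$, once that higher vanishing is granted, gives $\HH_1(K)_D=0$ under $(\#)$. Together with $\HH_1(K)\ne 0$, this already yields (2)(b) and (4)(b); there is no need to go to degree $D+1$.

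\textbf{A second point concerns (4)(c).} With $k=2$, the bound \eqref{t4} also contains the term $t_1^S(R)/2=m(I)/2$, and you never compare it with $(\rho+1)/3$. That comparison requires $2\rho\ge 3m(I)-2$, but the hypothesis of (4) only gives $2\rho\ge 3m(I)-3$. In the boundary case $2\rho=3m(I)-3$, \eqref{less} gives $\slant(R)\ge m(I)/2>(\rho+1)/3$. So the asserted equality fails whenever the hypotheses of (4)(c) hold in that case, and the case must be excluded. The paper's own proof passes over the same point. In (2)(c) the analogous term $m(I)-1$ causes no trouble, since $\rho\ge 2m(I)-2$ gives $m(I)-1\le\rho/2$.
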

\begin{proof}
Observe that the condition that $N>n$ tells us that the ideal $I$ defining $R$ is not a complete intersection, and thus it has non-trivial Koszul syzygies. 

(1) The conclusion follows directly from \cref{c:minimal}(2) with $k=1$.

(2) Assume $\rho\ge 2m(I)-2$, $N>n$ and $I$ is generated by a semi-regular sequence. 
By \cite[Theorem 3.6]{pardue2009syzygies}, the minimal free resolution of $R$ over $S$ agrees in the first $\rho-2$ strands with the Koszul complex $K$ on a minimal generating set of $I$. Consequently, the non-Koszul syzygies of $I$ are in degrees at least $\rho+1$. We have thus
\begin{equation}
\label{e:genrate}
\frac{\rho}{2}\le \rate(R)\le \max\Big\{\frac{\rho+1}{2}, m(I)-1\Big\}\le \frac{\rho+1}{2}\,,
\end{equation}
where the first inequality comes from \cref{non-koszul}, the second inequality comes from \eqref{t2} with $k=1$ and the last inequality comes from the hypothesis on $\rho$.

Statement (2)(a) follows directly from the first inequality in \eqref{e:genrate} and the hypothesis on $\rho$. 

 Furthermore, if $(\#)$ holds, the minimal free resolution of $R$ and the Koszul complex $K$ agree in the first $\rho-2$ strands (by \cite[Theorem 3.6]{pardue2009syzygies}), hence the non-Koszul syzygies of $I$ are in degrees at least $\rho+2$. In this case, \cref{non-koszul} gives an inequality
 $\displaystyle 
 \frac{\rho+1}{2}\le \rate(R)
 $,
 which is forced to be an equality by \eqref{e:genrate}. 

Assume now $\beta_2^S(R)_{\rho+2}=0$ and $\rho\ge 4$. Then $t_2^S(R)=\rho+1$ and thus 
$\displaystyle 
\frac{t_2^S(R)-1}{2}=\frac{\rho}{2}\,.
$
Further,  \cref{t2} with $k=2$ and the assumption on $\rho$ yield
\[
\rate(R)\le \max\Big\{\frac{\rho+2}{3}, \frac{\rho}{2}, m(I)-1\Big\}\le \frac{\rho}{2}\,.
\]
In view of the first inequality of \eqref{e:genrate}, the inequalities above are equalities, yielding (2)(c).  

(3) The conclusion follows directly from \cref{c:minimal}(4) with $k=1$.

(4) Assume $2\rho\ge 3m(I)-3$, equivalently $\displaystyle \frac{\rho+1.5}{3}\ge \frac{m(I)}{2}$, $N>n$ and $I$ is generated by a semi-regular sequence. We have then
\begin{equation}
\label{e:genslant}
\frac{\rho+1}{3}\le \slant(R)\le \max\Big\{\frac{m(I)}{2}, \frac{\rho+2}{3}\Big\}=\frac{\rho+2}{3}\,,
\end{equation}
where the first inequality comes from \cref{non-koszul}, using again the fact noted in the proof of (2) that the non-Koszul syzygies of $I$  are in degree at least $\rho+1$, the second inequality comes from \eqref{t4} with $k=1$ and the last inequality comes from the hypothesis on $\rho$.

Statement (4)(a) follows directly from the first inequality in \eqref{e:genslant} and the hypothesis on $\rho$. 

If $(\#)$ holds, then, as noted in the proof of (2), the non-Koszul syzygies of $I$ are in degree at least $\rho+2$, hence \cref{non-koszul} gives an inequality
$\displaystyle 
\frac{\rho+2}{3}\le \slant(R)
$,
which is forced to be an equality by \eqref{e:genslant}. 

Assume now $\beta_2^S(R)_{\rho+2}=0$  and $\rho\ge 5$ and thus
$\displaystyle 
\frac{t_2^S(R)}{3}=\frac{\rho+1}{3}\,.
$
Further, \eqref{t4} with $k=2$ and the assumptions on $\rho$ yield
\[
\slant(R)\le \max\Big\{\frac{\rho+3}{4}, \frac{\rho+1}{3}, \frac{m(I)}{2}\Big\}\le \frac{\rho+1}{3}\,.
\]
In view of the first equality in \eqref{e:genslant}, the inequalities above must be equalities. 
\end{proof}

\begin{remark}
One case when the Fr\"oberg conjecture is known to hold is when $N=n+1$ and the residue field has characteristic $0$ or greater than $n$; this follows from work of Watanabe. In this case, one can see that $(\#)$ holds when $n$ is even. On the other hand, it is shown in \cite{diethorn} that $\beta_2^S(R)_{\rho+2}=0$ when $n$ is odd.  Then \cref{p:genrateslant} provides formulas for rate that recover the computation in \cite{diethorn}, while additionally providing formulas for slant.
\end{remark}

 Assuming Froberg's conjecture, the regularity of $R$ is uniquely determined by the integers $n, N,d_1, \dots, d_N$. In particular, we describe regularity when $d_1=\dots=d_N=d$ as follows: 
 \begin{lemma}
 \label{regineq}
    Assume that $R=S/I$ is of type $(N;d)$ with $N\ge n$ and $d\ge 2$, and $I$ is generated by a semi-regular sequence. Then
    \[
    \reg_S(R)=\min\left\{p\in \mathbb N \colon N \binom{n+p-d}{p+1-d}\ge \binom{n+p}{p+1}\right\}.
    \]    
 \end{lemma}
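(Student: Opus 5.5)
Since $I$ is generated by a semi-regular sequence, the Hilbert series of $R$ is $\big[\frac{(1-z)^N\text{-type numerator}}{(1-z)^n}\big]$; concretely $H_R(z)=\big[(1-z^d)^N/(1-z)^n\big]$. Because $N\ge n$, the ring $R$ is Artinian, so $\reg_S(R)$ equals the top socle degree, i.e. the largest $p$ with $R_p\ne 0$. By the definition of truncation, $R_p\neq 0$ exactly for $p$ smaller than the index of the first nonpositive coefficient of $(1-z^d)^N/(1-z)^n$. So $\reg_S(R)+1$ is the least index $q$ at which the coefficient $c_q$ of $(1-z^d)^N/(1-z)^n$ is nonpositive. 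The plan is to show that this least $q$ equals $p+1$, where $p$ is the minimum in the statement.

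The key reduction is to compare the coefficients with the truncated series $(1-Nz^d)/(1-z)^n$. For $q<2d$ the terms $\binom{N}{k}z^{kd}$ with $k\ge 2$ do not contribute, so
\[
c_q=\binom{n-1+q}{q}-N\binom{n-1+q-d}{q-d}.
\]
Using $\binom{n+p}{p+1}=\sum_{j\le p+1}\binom{n-1+j}{j}\cdot$ (hockey stick), the inequality in the statement, $N\binom{n+p-d}{p+1-d}\ge\binom{n+p}{p+1}$, is exactly the statement that the partial sums $\sum_{j\le p+1}c_j$ are nonpositive, i.e. the coefficient of $z^{p+1}$ in $(1-Nz^d)/(1-z)^{n+1}$ is $\le 0$. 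So my plan is as follows.

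First, I will rewrite $H_R(z)(1-z)^{-1}$ and argue that, since the truncation of $(1-z^d)^N/(1-z)^n$ has all coefficients positive up to the cutoff, the first nonpositive coefficient of $f/(1-z)^n$ occurs at the same place as the first nonpositive coefficient of $f/(1-z)^{n+1}$. For $f/(1-z)^{n+1}$ this location is the first partial sum $\sum_{j\le q}c_j$ to become $\le 0$. Positivity of the earlier $c_j$ gives that the partial sums increase up to the first index where $c_j\le 0$, which is what is needed for this claim. I need to be careful here: the claim should be that $\sum_{j\le q}c_j\le 0$ first happens after $c_j$ first becomes $\le 0$. Let me fix the statement to compare the correct objects. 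The semi-regular Hilbert function truncates $(1-z^d)^N/(1-z)^n$. Matching the formula's binomials $\binom{n+p}{p+1}$, which is the coefficient of $z^{p+1}$ in $(1-z)^{-n}$, and $N\binom{n+p-d}{p+1-d}$ gives precisely $c_{p+1}\le 0$ with the first-order approximation. So no partial sums are needed: the formula says that $p+1$ is the least $q$ with the first-order coefficient $\binom{n-1+q}{q}-N\binom{n-1+q-d}{q-d}\le 0$.

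Second, I will show that this first nonpositive index $q_0$ satisfies $q_0<2d$, or more generally that the higher terms of $(1-z^d)^N$ do not affect which index comes first. Since $N\ge n$, at $q=d$ the coefficient is $\binom{n-1+d}{d}-N$. For $q\le 2d-1$ the exact and approximate coefficients coincide, so it suffices to show that the approximate coefficient is already $\le 0$ at some $q\le 2d-1$. The ratio $N\binom{n-1+q-d}{n-1}/\binom{n-1+q}{n-1}$ is increasing in $q$. I expect this step to be the main obstacle: for small $N$ relative to $\binom{n-1+d}{d}$ the crossing could occur beyond $2d$. In that case the plan is to use an alternating-sign argument: the exact coefficient $c_q$ equals the approximate one plus $\sum_{k\ge2}(-1)^k\binom{N}{k}\binom{n-1+q-kd}{n-1}$, and I would show by a Bonferroni-type monotonicity argument that this correction does not create an earlier nonpositive coefficient, nor delay the first one. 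Failing that, I would cite Pardue's description of the semi-regular regularity in \cite{pardue2009syzygies}.

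Finally, I will conclude that $\reg_S(R)=q_0-1=p$, using that an Artinian $R$ has $\reg_S(R)=\max\{j: R_j\ne0\}$.
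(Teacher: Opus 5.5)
Your first step is correct and is exactly the paper's argument. $R$ is Artinian, and $\reg_S(R)+1$ is the least $q$ with $c_q\le 0$, where $c_q$ is the coefficient of $z^q$ in $(1-z^d)^N/(1-z)^n$. The displayed condition says that the linearized coefficient $a_q=\binom{n-1+q}{q}-N\binom{n-1+q-d}{q-d}$ is $\le 0$ at $q=p+1$. The paper's proof then simply asserts that the coefficient of $z^{p+1}$ \emph{equals} $\binom{n+p}{p+1}-N\binom{n+p-d}{p+1-d}$. This silently discards the terms $(-1)^k\binom{N}{k}z^{kd}$ with $k\ge 2$, so the paper omits precisely your step 2.

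You were right to flag that step, but it cannot be completed. Once the crossing happens at an index $\ge 2d$, the two first-nonpositive indices can differ, and the statement as written is false. The simplest instance is $I=(x_1^2,x_2^2,x_3^2,x_4^2)$. It has type $(4;2)$, it is semi-regular since $H_R=(1+z)^4$, and $\reg_S(R)=4$. But $4\binom{5}{2}=40\ge 35=\binom{7}{4}$ while $4\binom{4}{1}=16<20=\binom{6}{3}$, so the formula gives $3$. Here $a_4=-5$, while $c_4=a_4+\binom{4}{2}=1$.

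Taking $N>n$ does not save it. For $n=10$, $N=11$, $d=2$ (semi-regular sequences exist in characteristic $0$) one has $H_R=1+10z+44z^2+110z^3+165z^4+132z^5$, so $\reg_S(R)=5$. However, $11\binom{12}{3}=2420\ge 2002=\binom{14}{5}$ and $11\binom{11}{2}=605<715=\binom{13}{4}$, so the formula gives $4$. Consequently no Bonferroni-type argument can work. For $N=n$ Bonferroni actually yields $c_q\ge a_q$: the higher terms can only postpone the crossing, and here they do. Citing Pardue also does not help, since it only returns your step 1, which describes $\reg_S(R)$ through the full coefficients $c_q$, not the linearized ones.

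What your approach does prove is the statement restricted to the range where the linearization is exact. First, $c_q=a_q$ for $q<2d$. Second, $a_q>0$ for $q<d$, and for $q\ge d$ the ratio $N\binom{n-1+q-d}{n-1}\big/\binom{n-1+q}{n-1}=N\prod_{i=1}^{n-1}\bigl(1-\tfrac{d}{q+i}\bigr)$ is increasing in $q$, so $a_q\le 0$ persists once it occurs. Together these give the following: for $0\le p\le 2d-2$, one has $\reg_S(R)\le p$ if and only if $N\binom{n+p-d}{p+1-d}\ge\binom{n+p}{p+1}$. Equivalently, the displayed equality holds whenever either side is at most $2d-2$. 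This corrected version is all that is used in the corollary that follows, where only thresholds $p\le 2d-2$ occur.
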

 \begin{proof}
Since $N\ge n$, we see that $R$ must be Artinian, and hence  $\mathrm{reg}_S(R)$ is equal to the top socle degree of $R$ (i.e. the largest $p$ with $R_p\ne 0$.) 
Let $p\in \mathbb N$. Given the expression in \eqref{Froberg} for the Hilbert function of $R$, the inequality $\reg_R(S)\le p$ is equivalent to the fact that the coefficient of $z^{p+1} $ in $\displaystyle \frac{(1-z^d)^N}{(1-z)^n}$ is non-positive. 
This coefficient equals
\begin{gather*}
\left(\mathrm{coefficient\ of \ } z^{p+1} \ \mathrm{in} \ \frac{1}{(1-z)^n}\right)- N \cdot\left( \mathrm{coefficient\ of \ } z^{p+1-d} \ \mathrm{in} \ \frac{1}{(1-z)^n}\right)\\
=\binom{n+p}{p+1} - N\binom{n+p-d}{p+1-d}. \qedhere
\end{gather*}
\end{proof}

\begin{remark}
\label{r:noFC}
Assume $d=2$ and $N\ge \frac{\textstyle{\binom{n+2}{3}}}{n}$. When $I$ is generated by a semi-regular sequence, \cref{regineq} gives $\reg_S(R)\le 2$, or equivalently $R_{\geqslant 3}=0$. The same conclusion holds when assuming that $R=S/I$ is a general quotient of $S$ of type $(N;d)$ instead of the fact that $I$ is generated by a semi-regular sequence.  
Indeed,  with $I=(f_1, \dots, f_N)$, the fact that $R$ is general implies that the elements $x_if_j$ with $1\le i\le n$ and $1\le j\le \frac{\textstyle{\binom{n+2}{3}}}{n}$ are linearly independent by \cite{hochster1987linear}, yielding that $R_{\geqslant 3}=0$.
\end{remark}

 Combining \cref{p:genrateslant} with \cref{regineq}, we obtain:
\begin{corollary}
\label{N&rate}
If $R=S/I$ is of type $(N;d)$ with $d\ge 2$ and $I$ is generated by a semi-regular sequence, then
\begin{enumerate}
\item If $\displaystyle N\ge \frac{\binom{n+2d-3}{2d-2}}{\binom{n+d-3}{d-2}}$, then $R$ has minimal rate. 

\item If $R$ has minimal rate and $N>n$, then
$N\ge \displaystyle \frac{\binom{n+2d-2}{2d-1}}{\binom{n+d-2}{d-1}}.$

\item If $\displaystyle N\le \frac{\binom{n+\lfloor \frac{3d-4}{2}\rfloor}{\lfloor \frac{3d-2}{2}\rfloor}}{\binom{n+\lfloor \frac{d-4}{2}\rfloor }{\lfloor \frac{d-2}{2}\rfloor}}$, then $R$ has minimal slant.

\item If $R$ has minimal slant and $N>n$, then $\displaystyle N\ge \frac{\binom{n+\lfloor \frac{3d-2}{2}\rfloor}{\lfloor \frac{3d}{2}\rfloor}}{\binom{n+\lfloor \frac{d-2}{2}\rfloor }{\lfloor \frac{d}{2}\rfloor}}$.
\end{enumerate}
\end{corollary}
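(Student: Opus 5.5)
The plan is to translate each regularity condition in \cref{p:genrateslant} into a condition on $N$ using \cref{regineq}, with $m(I)=d$ since $R$ has type $(N;d)$. By \cref{regineq}, when $N\ge n$ and $I$ is generated by a semi-regular sequence, $\reg_S(R)=\rho$ is the least $p$ with $N\binom{n+p-d}{p+1-d}\ge\binom{n+p}{p+1}$. The key observation is that this condition is monotone in $p$, since $\rho$ is the least such $p$. Hence, for a fixed integer $p\ge d-1$,
\[
\rho\le p \iff N\ge \binom{n+p}{p+1}\Big/\binom{n+p-d}{p+1-d}.
\]
So each part reduces to choosing the right $p$ and substituting. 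The case $N<n$ is set aside first. There a semi-regular sequence is a regular sequence, so $R$ is a complete intersection, and \cref{p:12} gives minimal rate and minimal slant directly. For $N=n$ the lemma applies as well.

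For (1), take $p=2d-3$. The displayed bound in (1) is exactly $\binom{n+2d-3}{2d-2}/\binom{n+d-3}{d-2}$, so the hypothesis gives $\rho\le 2d-3=2m(I)-3$, and \cref{p:genrateslant}(1) yields minimal rate. For (2), assume minimal rate and $N>n$. If $\rho\le 2d-3$ we are done. Otherwise \cref{p:genrateslant}(2)(a) forces $\rho=2d-2$. In either case $\rho\le 2d-2$, and the equivalence with $p=2d-2$ gives $N\ge\binom{n+2d-2}{2d-1}/\binom{n+d-2}{d-1}$.

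For (3), the condition $2\rho\le 3d-4$ of \cref{p:genrateslant}(3) is equivalent to $\rho\le p:=\lfloor\frac{3d-4}{2}\rfloor$. For this $p$ the relevant binomial entries are:
\begin{itemize}
\item $p+1=\lfloor\frac{3d-2}{2}\rfloor$;
\item $p+1-d=\lfloor\frac{d-2}{2}\rfloor$;
\item $n+p-d=n+\lfloor\frac{d-4}{2}\rfloor$.
\end{itemize}
These match the displayed quotient. The lemma gives $\rho\le p$ exactly when $N$ is at least this quotient. So the hypothesis of (3) has to be used in the form ``$N\ge$'' the displayed ratio, since the regularity decreases as $N$ grows. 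I would flag that the printed ``$\le$'' appears to be a typo for ``$\ge$''.

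For (4), assume minimal slant and $N>n$. If $2\rho>3d-4$, then \cref{p:genrateslant}(4)(a) gives $2\rho\in\{3d-2,3d-3\}$. In all cases $2\rho\le 3d-2$, that is, $\rho\le p:=\lfloor\frac{3d-2}{2}\rfloor$. Then $p+1=\lfloor\frac{3d}{2}\rfloor$, $p+1-d=\lfloor\frac d2\rfloor$ and $n+p-d=n+\lfloor\frac{d-2}{2}\rfloor$, and the equivalence gives the stated lower bound on $N$.

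The only delicate points are the following:
\begin{itemize}
\item checking the monotonicity of the condition in $p$, which is built into \cref{regineq} being a minimum of an upward-closed set (equivalently, the Hilbert function truncation stays zero once it vanishes);
\item handling the floor arithmetic correctly for both parities of $d$;
\item dealing with the direction of the inequality in (3).
\end{itemize}
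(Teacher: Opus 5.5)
Your proposal is correct and follows the paper's own route: the paper gives no separate proof beyond combining \cref{p:genrateslant} with \cref{regineq}. Your choices $p=2d-3,\ 2d-2,\ \lfloor\frac{3d-4}{2}\rfloor,\ \lfloor\frac{3d-2}{2}\rfloor$ reproduce the four bounds, and you are right that the ``$\le$'' in (3) must be ``$\ge$''. The one step to tighten is the equivalence $\rho\le p\iff N\ge\binom{n+p}{p+1}/\binom{n+p-d}{p+1-d}$: it does not follow from $\rho$ being a minimum, but from the threshold $\prod_{j=2}^{n}\frac{p+j}{p+j-d}$ being nonincreasing in $p$ for $p\ge d-1$. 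Also note that all your values of $p$ satisfy $p+1<2d$, which is exactly the range where the two-term coefficient formula used in the proof of \cref{regineq} is valid.
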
 

 Neither of the implications in the statements of \cref{N&rate} is reversible, as can be seen in the case $d=2$, based on the known range for $N$ when $R$ is Koszul (see \ref{interval}).

For the remainder of the section, we will consider the case when $d=2$, and the inequalities of (2) in \cref{N&rate} hold, but $R$ is not Koszul. In this case, we have $R_{\geqslant 3}=0$, and hence $\rate(R)\le \frac{3}{2}$ and $\slant(R)\le \frac{4}{3}$ by \cref{p:genrateslant}. We show next that there is a range of values of $N$ for which equalities hold. Note that this statement does not assume the Fr\"oberg conjecture.

\begin{proposition}
\label{p:range}
Let $R$ be a general quotient of $S$ of type $(N;2)$ with 
\[
\frac{\textstyle{\binom{n+2}{3}}}{n}\le N<\frac{\sqrt{2n^4-2n^2+1}-n^2+n+1}{2}.
\]
Then $\rate(R)=\frac{3}{2}$ and $\slant(R)=\frac{4}{3}$. 
\end{proposition}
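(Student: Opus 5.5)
The plan is to reduce everything to \cref{firstdrop}. First, since $R$ is a general quotient of type $(N;2)$ with $N\ge \binom{n+2}{3}/n$, \cref{r:noFC} gives $R_{\geqslant 3}=0$, which does not depend on the Fr\"oberg conjecture. The upper bound on $N$ is below $\frac{n^2}{4}+\frac n2$ and the lower bound exceeds $n$, so by \ref{interval}(1) $R$ is not Koszul. (If that comparison of ranges turns out to be awkward, non-Koszulness will also follow from the step below, since $t_3^R(\kk)>3$.) Hence \cref{firstdrop} applies, and with $\alpha=\min\{i\mid t_i^R(\kk)>i\}$ we get $\rate(R)=1+\frac{1}{\alpha-1}$ and $\slant(R)=1+\frac1\alpha$. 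By \eqref{1&2}, $t_1^R(\kk)=1$ and $t_2^R(\kk)=m(I)=2$, so $\alpha\ge 3$. The statement is therefore equivalent to $\alpha=3$, that is, to $t_3^R(\kk)\ge 4$.

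To prove $t_3^R(\kk)\ge 4$ I will use \cref{non-koszul}, or more precisely its proof: $t_3^R(\kk)\ge t_0^S(\HH_1(K))$, where $K$ is the Koszul complex on the $N$ quadrics $f_1,\dots,f_N$. It suffices to show $\HH_1(K)_4\neq 0$, i.e. that $I$ has a non-Koszul syzygy of internal degree $4$. I will get this from an Euler characteristic count on the degree-$4$ strand of $K$:
\[
0\to K_2{}_{,4}=\textstyle\bigwedge^2\kk^N \to K_1{}_{,4}=S_2^{\,N}\to K_0{}_{,4}=S_4\to 0 .
\]
Since $R_4=0$, the homology at the right end vanishes. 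Because $\HH_2(K)_4\ge 0$, this yields
\[
\dim_\kk \HH_1(K)_4\;\ge\; N\binom{n+1}{2}-\binom{n+3}{4}-\binom{N}{2}.
\]
Note also that $H_2(K)_4$ is the kernel of $\bigwedge^2\kk^N\to S_2^N$, which is zero since the $f_i$ are linearly independent. So the inequality is in fact an equality, $\dim\HH_1(K)_4=N\binom{n+1}{2}-\binom{n+3}{4}-\binom N2$.

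The remaining task is purely numerical: show that this quantity is positive for all $N$ strictly below the stated bound $\frac{\sqrt{2n^4-2n^2+1}-n^2+n+1}{2}$. Clearing denominators turns positivity into a quadratic inequality in $N$. I expect the root of the corresponding quadratic to be exactly the stated bound, so that the hypothesis translates into the required strict inequality. The main obstacle is this bookkeeping: the discriminant should be $2n^4-2n^2+1$ after simplification, and one also needs the relevant root and sign conventions to match the given expression. If the count from $\HH_1(K)_4$ alone turns out too weak to match the bound exactly, I would instead compute the coefficient of $z^4$ in $A(z)H(-z)$ from the proof of \cref{firstdrop}, with $H(z)=1+nz+(\binom{n+1}{2}-N)z^2$. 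We know $\alpha=a-1$, so I would show that this coefficient is nonzero, i.e. $a=4$. This route uses $\dim A_2=\binom n2+N$ and the vanishing of the $z^3$ coefficient to express $A_3$, and it needs a lower bound on $A_4$ (equivalently on $\beta^R_{3,4}$), which again reduces to the degree-$4$ Koszul syzygy count above.

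Once $t_3^R(\kk)\ge 4>3$ is established, we have $\alpha=3$, and \cref{firstdrop} gives $\rate(R)=\frac32$ and $\slant(R)=\frac43$.
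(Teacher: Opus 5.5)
\textbf{There is a genuine gap in your second paragraph.} The step that fails is ``it suffices to show $\HH_1(K)_4\neq 0$.'' The proof of \cref{non-koszul} gives $t_3^R(\kk)\ge t_0^S(\HH_1(K))$, which is the top degree of a \emph{minimal generator} of $\HH_1(K)$. More precisely, the number $\varepsilon_{3,4}$ of Tate variables in $X_3$ of internal degree $4$ equals $\dim_\kk(\HH_1(K)\otimes_S\kk)_4$.

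A nonzero element of $\HH_1(K)_4$ need not be a minimal generator, because $\HH_1(K)_4$ may lie inside $S_1\HH_1(K)_3$. In your range $\HH_1(K)_3$ is typically nonzero: the degree-$3$ strand $0\to S_1^N\to S_3\to 0$ together with $R_3=0$ gives $\dim_\kk\HH_1(K)_3=nN-\binom{n+2}{3}$. This equals $4$ for $n=6$, $N=10$, which is an allowed value.

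The implication you use is false in general. For $n=2$ and $I=\m^2$, your count gives $\dim_\kk\HH_1(K)_4=9-5-3=1$, yet $R$ is Koszul and $t_3^R(\kk)=3$. Consistently with this, your numerics cannot match the stated bound. Positivity of your count is $N^2-(n^2+n+1)N+2\binom{n+3}{4}<0$, whose discriminant is $(2n^4-2n^2+3)/3$ rather than $2n^4-2n^2+1$. For $n=6$ it holds for every $8\le N\le 21$, including $N\ge 12$, where quotients by generic quadrics are Koszul.

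Your fallback does not close the gap either. It is only sketched, it routes back through the same count, and it conflates $A_4=\beta^R_{4,4}$ with $\beta^R_{3,4}$.

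\textbf{The fix.} You need to discard the part of $\HH_1(K)_4$ that comes from degree $3$. Since $\HH_1(K)_{\le 2}=0$, one has $(\m\HH_1(K))_4=S_1\HH_1(K)_3$, and hence
\[
\varepsilon_{3,4}\ \ge\ \dim_\kk\HH_1(K)_4-n\dim_\kk\HH_1(K)_3=3\binom{n+2}{4}-\binom{n}{2}N-\binom{N}{2}.
\]
Positivity of the right-hand side is exactly $N^2+(n^2-n-1)N-6\binom{n+2}{4}<0$, whose discriminant is $2n^4-2n^2+1$.

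This is precisely the lower bound the paper obtains by comparing coefficients of $u^4$ in $H_R(u)\Po_\kk^R(-1,u)=1$, written in terms of deviations. There the dropped nonnegative term is $\varepsilon_{4,4}$, which plays the role of your dropped quantity $n\dim_\kk\HH_1(K)_3-\dim_\kk S_1\HH_1(K)_3$.

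Equivalently, your fallback works once you bound $A_4=\beta^R_{4,4}$ from below by all products of Tate variables, discarding only $\varepsilon_{4,4}$. This must include the $n\varepsilon_{3,3}$ products of a variable in $X_1$ with a variable in $X_{3,3}$.

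With this correction, the rest of your outline via \cref{r:noFC} and \cref{firstdrop} goes through.
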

\begin{proof}
The inequality $\frac{\textstyle{\binom{n+2}{3}}}{n}\le N$ implies that $R_{\geqslant 3}=0$ by \cref{r:noFC} and hence the Hilbert series of $R$ is given by 
\begin{equation}
\label{e:HilbR}
H_R(u)=1+nu+(\textstyle{\binom{n+1}{2}}-N)u^2.
\end{equation}
Let $R\langle X\rangle$ be a Tate resolution of $\kk$ over $R$, and denote by $X_{i,j}$ the subset of variables $X$ consisting of variables of homological degree $i$ and internal degree $j$. The (graded) deviations of $R$, denoted $\varepsilon_{i,j}$, are defined as the cardinality of the set $X_{i,j}$. Note that $\varepsilon_{i,j}=0$ when $j<i$. One can then write the Poincar\'e series of $\kk$ as 
\begin{equation}
\label{e:Pdeviations}
\Po_{\kk}^R(t,u)=\frac{\prod_{i\geq 0}\prod_{j\geq 2i+1 }(1+t^{2i+1}u^j)^{\epsilon_{2i+1,j}}}{\prod_{i\geq 1}\prod_{j\geq 2i }(1-t^{2i}u^j)^{\epsilon_{2i,j}}}.
\end{equation}
The deviations in homological degree $1$ and $2$ can be seen to be
\[ 
\varepsilon_{1,j}=\begin{cases} n &\qif j=1\\
0 &\qif j\ne 1
\end{cases}
\qand 
\varepsilon_{2,j}=\begin{cases}
    N& \qif j=2\\
    0 &\qif j\ne 2\,.
\end{cases}
\]
We show next that $t_3(R)>3$, which gives $\alpha=3$ in \cref{firstdrop}, yielding the desired equalities. To this extent, it suffices to show that $\varepsilon_{3,4}>0$.

The Poincar\'e series of $\kk$ and the Hilbert series of $R$ are connected through a formula that is a consequence of the additivity of (vector space) dimension in the minimal free resolution of $\kk$: 
\[
H_R(u)\Po_\kk^R(-1,u)=1\,.
\]
Using \eqref{e:Pdeviations} and \eqref{e:HilbR}, we have thus
\[
\Big(\prod_{i\geq 1}\prod_{j\geq 2i+1 }(1-u^j)^{\epsilon_{2i+1,j}}\Big)(1+nu+(\textstyle{\binom{n+1}{2}}-N)u^2)=\prod_{i\geq 0}\prod_{j\geq 2i }(1-u^j)^{\epsilon_{2i,j}}.
\]
Setting $a=\epsilon_{3,3}$, $b=\epsilon_{3,4}$, and $c=\epsilon_{4,4}$, the following equality then holds modulo $(u^5)$: 
\[
(1-u)^n(1-u^3)^{a}(1-u^4)^b(1+nu+(\textstyle{\binom{n+1}{2}}-N)u^2)=(1-u^2)^{N}(1-u^4)^{c}.
\]
From here, we further obtain:
\begin{gather*}
\left(1-a u^3-nu+nau^4+\textstyle{\binom{n}{2}u^2}-\textstyle{\binom{n}{3}u^3}+\textstyle{\binom{n}{4}u^4}-bu^4\right)\left(1+nu+(\textstyle{\binom{n+1}{2}}-N)u^2\right)\\=(1-Nu^2+\textstyle{\binom{N}{2}}u^4)(1-cu^4) \quad \mod (u^5).
\end{gather*}
Equating the coefficient of $u^4$ in both sides, we have
\[
\textstyle{\binom{n}{2}}(\textstyle{\binom{n+1}{2}}-N)-\textstyle{n\binom{n}{3}}+\textstyle{\binom{n}{4}}-b=\binom{N}{2}-c\,.
\]
To show the desired inequality $b>0$, it suffices to show
\[
\textstyle{\binom{N}{2}+\binom{n}{2}N-\binom{n}{2}\binom{n+1}{2}+n\binom{n}{3}-\binom{n}{4}}<0
\]
or in other words
\begin{equation*}
\label{N}
N^2+(n^2-n-1)N-6\textstyle{\binom{n+2}{4}}<0.
\end{equation*}
Regarding the expression on the left as a quadratic polynomial in $N$, its discriminant can be computed to be $2n^4-2n^2+1$, and hence the expression
\[
\frac{\sqrt{2n^4-2n^2+1}-n^2+n+1}{2}
\]
is the largest of the two roots of this polynomial, while the second root is negative. The assumption in the hypothesis shows that \eqref{N} holds, and thus $b=\varepsilon_{3,4}>0$. 
\end{proof}

Our results, together with the interval for Koszulness recalled in \ref{interval},  suggest that as $N$ ranges from $n+1$ to $n^2/4+n/2$, the rate is decreasing from $(\reg_S(R)+1)/2$ to $1$. We are able to further describe this behavior when assuming a conjecture of Fr\"oberg and L\"ofwall in  \cite{froberg2025generic}.

In \cite{froberg2025generic}, the authors say that a $\kk$-Lie-algebra $L$ has type $(n, r)$ if it is a quotient of the free Lie $\kk$-superalgebra (with
squares of odd elements) on the odd elements $x_1, \dots, x_n$ modulo the Lie ideal generated by linearly independent elements $f_1, \dots, f_r$, where \[
f_i = \sum_{1\le j\le n} c_{i,j}x^2_j
+\sum_{1\le j<l\le n} c_{i,j,l}([x_j , x_l])
\]
where $c_{i,j} , c_{i,j,l}\in \kk$. Similarly, a quotient $A$ of a non-commutative polynomial ring in variables $x_1, \dots, x_n$ modulo the ideal generated by forms $f_1, \dots, f_r$ as above, is called a non-commutative $\kk$-algebra of Lie type $(n,r)$.  Any such algebra $A$ is the universal enveloping algebra of the corresponding Lie algebra $L$. We denote by $L(z)$ and $A(z)$ the Hilbert series of these algebras. They are related by the formula
\[
A(z)=\Exp(L(z))=\prod_{i\ge 0}\frac{(1+z^{2i-1})^{e_{2i-1}}}{(1-z^{2i})^{e_{2i}}}\,,
\]
where $L(z)=\sum_{i\ge 0}e_iz^i$. A noncommutative $\kk$-algebra of Lie type $(n,r)$ is said to be {\it generic} if it has minimal Hilbert series among the noncommutative $\kk$-algebras of Lie type $(n,r)$ (where the field $\kk$ is allowed to vary, but its prime field is fixed), with the minimum being attained when the coefficients $c_{i,j}$, $c_{i,j,l}$ are algebraically independent over the prime field of $\kk$. Similarly, a $\kk$-Lie algebra of type $(n,r)$ is said to be {\it generic} if it has minimal Hilbert series among the $\kk$-Lie algebras of type $(n,r)$, in the same sense.  In \cite[Proposition 2.5]{froberg2025generic} it is noted that the Lie algebra $L$ is generic if and only if its universal enveloping algebra $A$ is generic.

The following conjecture is proposed in \cite{froberg2025generic}.

\begin{conjecture}\label{Fconj}
    If $L$ is a generic Lie algebra of type $(n,r)$ then  
    \[
    L(z)=\big[\mathrm{Log}(1/(1-nz+rz^2)\big]\,,
    \]
    where for a power series $V(z)$ with constant term $1$, $\Log$ (the inverse of $\Exp$) is defined by
    \[
    \Log(V(z))=\sum_{i\geq 1}\frac{\mu(i)}{i}\log(V((-1)^{i+1}z^i))
    \]
 and $\mu$ denotes the M\"{o}bius function and $\log$ is the natural logarithm.
\end{conjecture}

For the purpose of the next result, we write $S_n=\kk[x_1, \dots, x_n]$; the number of variables is no longer fixed. 

\begin{corollary}
\label{c:limit}
    Assume \cref{Fconj}. Let $\alpha\in \mathbb{N}$. For each $n\ge 1$, set \[
    N_n=\frac{n^2}{4}+\frac{n}{2}-\alpha
    \]
    and let $R_n=S_n/(f_1, \dots, f_{N_n})$ be a quotient of $S_n$ of type $(N_n;2)$ such that the forms $f_1, \dots, f_{N_n}$ have generic coefficients. Then  
    \[
    \lim_{n\to\infty}\rate(R_n)=1 \qand \lim_{n\to\infty}\slant(R_n)=1. 
    \]
 \end{corollary}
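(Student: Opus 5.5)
The plan is to reduce to \cref{firstdrop}. For $n$ large, $N_n=\frac{n^2}{4}+\frac n2-\alpha\ge \binom{n+2}{3}/n\approx n^2/6$. By \cref{r:noFC} (generic coefficients are in particular general), $R_n$ satisfies $(R_n)_{\geqslant 3}=0$ and has Hilbert series $H(z)=1+nz+(\binom{n+1}{2}-N_n)z^2$. If $R_n$ is Koszul, both invariants equal $1$. Otherwise \cref{firstdrop} gives
\[
\rate(R_n)=1+\frac{1}{a_n-1},\qquad \slant(R_n)=1+\frac1{a_n},\qquad a_n=\min\{i\mid t_i^{R_n}(\kk)>i\}.
\]
So it suffices to prove $a_n\to\infty$. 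Equivalently, the expansion of $\Po_\kk^{R_n}$ agrees with that of a Koszul algebra, namely $1/H(-tu)$, in a number of homological degrees that grows with $n$.

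Next I will express $a_n$ through the Koszul dual. By \eqref{eq}, with $A(z)$ the Hilbert series of the subalgebra of $\Ext$ generated in degree $1$, the proof of \cref{firstdrop} gives $a_n=a-1$. Here $a$ is the order of vanishing of $A(z)H(-z)-1$. The algebra $A=R_n^!$ is the quadratic dual. It is a noncommutative algebra of Lie type $(n,r)$ with $r=\binom{n+1}{2}-N_n$ relations, since the dual relations are the orthogonal complement of $I_2$ in $S_2^*$. With generic coefficients for the $f_i$, these dual relations are generic in the sense of \cite{froberg2025generic}. Assuming \cref{Fconj}, together with \cite[Proposition 2.5]{froberg2025generic} and $A(z)=\Exp(L(z))$, the series $A(z)$ agrees with $1/(1-nz+rz^2)=1/H(-z)$ exactly up to the first degree where $\Log(1/(1-nz+rz^2))$ has a nonpositive coefficient. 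Thus $a$ is essentially that index: $L(z)$ is the truncation, and $\Exp$ of a truncation first differs at that degree.

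The main obstacle is to show that this first nonpositive coefficient index tends to infinity when $r=\binom{n+1}2-\frac{n^2}{4}-\frac n2+\alpha=\frac{n^2}{4}+\alpha$. Note that $1-nz+rz^2$ has discriminant $n^2-4r=-4\alpha<0$ (when $\alpha\ge1$). Its roots are therefore complex conjugates $\frac{n\pm 2i\sqrt{\alpha}}{2r}$ of modulus $r^{-1/2}$ and argument $\theta_n\approx 2\sqrt\alpha/n\to 0$. The coefficients of $1/(1-nz+rz^2)$ are $r^{k/2}\frac{\sin((k+1)\theta_n)}{\sin\theta_n}$, which stay positive for $k<\pi/\theta_n-1\approx \pi n/(2\sqrt\alpha)$. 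The coefficients of $\Log$ are dominated by the $i=1$ term $\log(1/(1-nz+rz^2))$, whose $k$-th coefficient is $\frac{2}{k}r^{k/2}\cos(k\theta_n)$. This is positive for $k<\pi/(2\theta_n)\approx \pi n/(4\sqrt\alpha)$. The Möbius correction terms with $i\ge2$ involve $V(\pm z^i)$ and are of size roughly $r^{k/4}$, negligible against $r^{k/2}\cos(k\theta_n)$ while $k\theta_n$ stays bounded away from $\pi/2$.

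Hence the first nonpositive index is at least $c\,n$ for some constant $c>0$. This gives $a_n\to\infty$, and therefore both limits equal $1$. The case $\alpha=0$ would be degenerate, since then Koszulness holds, but $\alpha\in\mathbb N$ is presumably positive.
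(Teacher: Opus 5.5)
Your proof is correct, and its architecture matches the paper's. For $n\gg0$ one has $(R_n)_{\geqslant 3}=0$. \cref{firstdrop}, via \eqref{e:a}, reduces both limits to showing that the order of vanishing of $A(z)H(-z)-1$ tends to infinity. \cref{Fconj} then bounds this order from below by the index $\nu(n)$ of the first nonpositive coefficient of $\Log(1/(1-nz+rz^2))$. Strictly speaking, if that coefficient is zero you only get $a\ge\nu(n)$ rather than equality, but the lower bound is all you use.

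The genuine difference is in the final step. The paper argues one $k$ at a time: $L_k$ is a polynomial in $n$ of degree $k$, with leading coefficient $\frac{1}{k2^{k-1}}$ coming from the M\"obius term of index $1$. Hence $L_1,\dots,L_k>0$ for $n\ge N(k)$, which gives $\nu(n)\to\infty$ in two lines but with no rate.

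You instead estimate uniformly in $k$. You compare the main term $\frac{2}{k}r^{k/2}\cos(k\theta_n)$ with the M\"obius corrections, which are bounded by $\frac{2}{k}d(k)r^{k/4}$, where $d(k)$ is the number of divisors of $k$. This bound is uniform because $r^{k/4}\ge (n/2)^{k/2}$ dominates $d(k)$ for $k\ge2$ and $n$ large. It yields the stronger conclusion $\nu(n)\ge cn$ with $c$ close to $\pi/(4\sqrt{\alpha})$, so both $\rate(R_n)-1$ and $\slant(R_n)-1$ are $O(\sqrt{\alpha}/n)$. The cost is extra bookkeeping that the paper's degree count avoids.

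Your side remark on the coefficients of $1/(1-nz+rz^2)$ is not needed for the argument.
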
   
\begin{proof}
    For $n\gg 0$, we have $N_n\geq \frac{{n+2\choose 3}}{n}$, which implies that $(R_n)_{\geqslant 3}=0$. Then \cite[Corollary  1.3]{MR846457}  gives that the Koszul dual $A$ of $R$ is a $\kk$-noncommutative algebra of Lie type $(n,c)$ with 
    \[
    c=\binom{n+1}{2}-N_n=\frac{n^2}{4}+\alpha.
    \]
    and is the universal enveloping algebra of a $\kk$-Lie algebra $L$ of type $(n,c)$. By \cite[Theorem 4.1]{froberg2025generic}, $A$ is generic, and hence $L$ is also generic. 
    Using \cref{Fconj}, we have:  
    $$ A(z) = \Exp(L(z))=\Exp\Big(\big[\mathrm{Log}(1/(1-nz+cz^2)\big]\Big).$$
    
 Let $a_n:=\frac{n}{2}+i\sqrt{\alpha}$. We have $1-nz+cz^2=(1-a_nz)(1-\overline{a_n}z)$, and 
 \begin{align*}
\mathrm{Log} \left( \frac{1}{1-nz+cz^2}\right)&=\sum_{r=1}^{\infty} \frac{\mu(r)}{r}\left( \mathrm{log} \left(\frac{1}{(1-(-1)^{r+1}a_nz^r)}\right) + \mathrm{log} \left(\frac{1}{(1-(-1)^{r+1} \overline{a_n} z^r)}\right)\right)\\
&=\sum_{r=1}^{\infty} \frac{\mu(r)}{r} \left(  \sum_{m=1}^{\infty} (-1)^{(r+1)m}\frac{(a_n^m+\overline{a_n}^m)z^{rm} }{m}  \right)\\
&=\sum_{k=1}^{\infty}(-1)^k\frac{1}{k} \left( \sum_{r\, | \, k} (-1)^{k/r} \mu(r) (a_n^{k/r}+\overline{a_n}^{k/r})\right)z^k\,.
\end{align*}

The coefficient of $z^k$ in this series is
\begin{equation}\label{L_coeff}
L_k:=\frac{1}{k} \sum_{r\, | \, k}(-1)^{k+k/r} \mu (r) (a_n^{k/r} + \overline{a_n}^{k/r})\,.
\end{equation}
Let $\nu(n)$ denote the smallest value of $k$ for which $L_k<0$. 
Then
$$
\left[\mathrm{Log}\left(\frac{1}{1-nz+cz^2}\right)\right]=\mathrm{Log}\left(\frac{1}{1-nz+cz^2}\right) + z^{\nu(n)} B(z)
$$
where $B(z)=\sum_{i=0}^{\infty} b_iz^i$ has $b_0\ne 0$.
It follows that 
$$
A(z)=\frac{1}{1-nz+cz^2} + z^{\nu(n)}C(z)\,,
$$
 where $C(z)=\sum_{i=0}^{\infty} c_iz^i$ has $c_0\ne 0$.   
    Since $H_R(-z)=1-nz+cz^2$, $\nu (n)$ is also the smallest positive exponent of a power of $z$ that has a nonzero coefficient in $A(z)H_R(-z)$.
   Equation \eqref{e:a} established in the proof of \cref{firstdrop} now gives 
$\rate(R)=\frac{\nu(n)-1}{\nu(n)-2}$ and $\slant(R)=\frac{\nu(n)}{\nu(n)-1}$
and the conclusion will follow once we prove that $\lim_{n\to \infty} \nu(n)=\infty$.

For a fixed $k$, the right-hand side of (\ref{L_coeff}) is a polynomial in $n$ of degree $k$ with positive leading coefficient. Indeed, the terms corresponding to $r>1$ have degree less than $k$ and the term corresponding to $r=1$ is $\frac{1}{k}((\frac{n}{2}+i\sqrt{\alpha})^k+(\frac{n}{2}-i\sqrt{\alpha})^k)$; the leading term is $\frac{1}{k2^{k-1}}$. It follows that there exists $N(k)$ such that for $n\ge N(k)$, $L_0, \ldots, L_k$ are positive, hence $\nu(n)>k$.
\end{proof}

\section{Asymptotic behavior of maximal shifts}
\label{s:asymptotic}

So far, our computations of rate and slant in special cases show that these invariants are often attained at small indices, and hence they can be read from the first few terms of the resolution of $\kk$. Such computations do not provide thus a
complete picture of how the Betti table behaves asymptotically. In this section, we shed more light on the asymptotic behavior of maximal shifts in the case of rings that are either Golod or are homomorphic images of a quadratic complete intersection via a Golod homomorphism. 

We first give a version of a classical problem, cf.~\cite{gilmore1966theory}. 

\begin{setting}[{\bf Unbounded Knapsack Problem}]\label{settingUKP}
 Let $\mathcal N$ denote a (possibly infinite) set of integers. 
Let $\{j_k\}_{k\in \mathcal{N}}$ be a set of non-negative integers and $\{i_k\}_{k\in \mathcal{N}}$ to be a set of distinct positive integers.  Set 
\[
\tau=\sup\;\{j_k/i_k:\;k\in \mathcal N\}
\]
and assume that the supremum is attained. 
Let $a\in \mathcal N$ such that $j_a/i_a=\tau$ and if $j_k/i_k=\tau$ for some $k\ne a$ then $i_k>i_a$. For $d>0$ consider 
\[
\Lambda(d)=\max_{x_k\in \mathbb N}\quad  \sum_{k\in \mathcal N} x_kj_k \quad \subjectto\quad  \sum_{k\in \mathcal N} x_ki_k\le d.
\] 
Observe that these sums are finite, as the inequality $\sum_{k\in \mathcal N} i_kx_k\le d$ forces all but finitely many values of $x_k$ to be zero. 
\end{setting}

\begin{lemma}
\label{l:UKP}
    Assume \cref{settingUKP}. Then \[\Lambda(d+i_a)=\Lambda(d)+\Lambda(i_a)\qforall d\gg 0.\]   
\end{lemma}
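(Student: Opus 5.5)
The plan is the classical Gilmore--Gomory exchange argument for the unbounded knapsack problem, adapted to a possibly infinite item set. First I will compute $\Lambda(i_a)$. Any feasible choice with $\sum x_ki_k\le i_a$ has value $\sum x_kj_k\le \tau\sum x_ki_k\le \tau i_a=j_a$. The choice $x_a=1$, all other $x_k=0$, attains this, so $\Lambda(i_a)=j_a$. Next, superadditivity $\Lambda(d+i_a)\ge \Lambda(d)+j_a$ holds for every $d$: add one copy of item $a$ to an optimal solution for capacity $d$. If $j_a=0$ then $\tau=0$, all $j_k=0$, and the statement is trivial, so I assume $j_a>0$. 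It remains to prove $\Lambda(d+i_a)\le\Lambda(d)+j_a$ for $d\gg0$.

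For this I will take an optimal solution $(x_k)$ for capacity $d+i_a$, chosen with $x_a$ as large as possible among optimal solutions. If $x_a\ge1$, removing one copy of $a$ gives a solution feasible for capacity $d$, so $\Lambda(d+i_a)-j_a\le\Lambda(d)$ and we are done. So the goal is to rule out $x_a=0$ for large $d$.

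The exchange step handles part of this. Suppose the solution uses at least $i_a$ items (with multiplicity) other than $a$. List them as $y_1,\dots,y_m$ with $m\ge i_a$. By pigeonhole on the partial sums $i_{y_1}+\dots+i_{y_s}$ modulo $i_a$, some consecutive block has total weight $q\,i_a$ with $q\ge1$. Replacing that block by $q$ copies of $a$ keeps the total weight the same and does not decrease the value, since the block's value is at most $\tau q i_a=qj_a$. This contradicts the maximality of $x_a$. Hence, if $x_a=0$, the solution consists of fewer than $i_a$ items. Moreover, its total weight $w$ satisfies $d<w\le d+i_a$: if $w\le d$, the solution is feasible for capacity $d$, and its value would be at most $\Lambda(d)<\Lambda(d)+j_a\le\Lambda(d+i_a)$, contradicting optimality. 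Consequently some item $k$ in the solution has $i_k>d/i_a$, which is very large.

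The main obstacle is to exclude this remaining configuration: fewer than $i_a$ non-$a$ items, one of them enormous, total weight in $(d,d+i_a]$. When $\mathcal N$ is finite it is immediate, since $i_k\le\max_{\mathcal N}i_k$, so for $d\ge i_a\max_{\mathcal N}i_k$ it cannot occur. For infinite $\mathcal N$, my first attempt is a defect argument. Set $\delta(x)=\tau\sum x_ki_k-\sum x_kj_k\ge0$, and let $D(c)$ be the minimal defect over solutions of total weight exactly $c$. The uniqueness choice of $a$ (smallest $i$ among items of ratio $\tau$) means that only weights divisible by $i_a$ can have defect $0$ using $a$ alone. I will try to show that $\Lambda(c)=\max_{c'\le c}(\tau c'-D(c'))$ and that $D$ is eventually periodic of period $i_a$ on each residue class, with the minimum over each class attained by some fixed finite configuration plus copies of $a$. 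Then every optimal solution for large $d$ can be taken to use only finitely many fixed items together with copies of $a$, which reduces to the finite case. If this periodicity of $D$ proves hard to establish, I would fall back on proving the lemma under the additional hypothesis that $\sup\{j_k/i_k: k\ne a,\ i_k \text{ large}\}<\tau$, and check that this holds in the Poincar\'e series application in \cref{t:additivity}.
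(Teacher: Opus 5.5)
Your finite-$\mathcal N$ argument is correct. The computation $\Lambda(i_a)=j_a$, the superadditivity $\Lambda(d+i_a)\ge\Lambda(d)+j_a$, and the pigeonhole exchange all hold. The exchange shows that an optimal solution with $x_a$ maximal and $x_a=0$ must contain an item of weight $>d/i_a$.

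The problem is that the setting explicitly allows $\mathcal N$ to be infinite, and for infinite $\mathcal N$ you do not give a proof. This case is needed in the application: for a quadratic complete intersection $P$, the series $\Po^P_R$, and hence $1-\Qo_2$, in general has infinitely many nonzero coefficients. The eventual periodicity of $D$ is only announced (``I will try to show''). The fallback adds a hypothesis that is not in the statement, so it proves a different lemma. As written, nothing rules out the configuration you isolate (few non-$a$ items, one of huge weight) recurring for infinitely many $d$.

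The missing idea is a discreteness argument, and it makes the exchange step unnecessary. Fix $0\le i<i_a$ and set $\delta_i(q)=(qi_a+i)\tau-\Lambda(qi_a+i)$.
\begin{itemize}
\item You already have $\Lambda(c)\le\tau c$, so $\delta_i(q)\ge0$.
\item Superadditivity gives $\delta_i(q+1)-\delta_i(q)=j_a+\Lambda(qi_a+i)-\Lambda((q+1)i_a+i)\le0$.
\item Since $\tau=j_a/i_a$ and $\Lambda$ is integer-valued, $\delta_i(q)\in\frac1{i_a}\mathbb Z$.
\end{itemize}
A non-increasing sequence in $\frac1{i_a}\mathbb Z_{\ge0}$ is eventually constant. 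The equality $\delta_i(q+1)=\delta_i(q)$ says exactly that $\Lambda(d+i_a)=\Lambda(d)+j_a$ for $d=qi_a+i$. Taking the largest threshold over the $i_a$ residue classes finishes the proof; this is the paper's argument.

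Put differently, each $d$ with $\Lambda(d+i_a)>\Lambda(d)+j_a$ lowers the defect in its residue class by at least $1/i_a$. So such $d$ occur only finitely often, whether or not $\mathcal N$ is finite. The same observation would also give the eventual constancy of your $D$ along residue classes. Adding a copy of $a$ does not change the defect, so $D(c+i_a)\le D(c)$, and the finite values of $D$ lie in $\frac1{i_a}\mathbb Z_{\ge0}$.
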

\begin{proof}
First, observe that 
\begin{equation}
\label{lambda}
\Lambda(d)\le\tau d\qforall d>0\,.
\end{equation}
Indeed, we have $j_k\le i_k\tau$ for all $k\in \mathcal N$, and hence for $x_k\in \mathbb N$ with $\sum x_ki_k\le d$ we have
\[
\sum_{k=1}^\infty x_kj_k\le \tau\sum_{k=1}^\infty x_ki_k\le \tau d\,.
\]
Additionally,
\begin{equation}
\label{lambda2}
  \Lambda(d+i_a)\ge \Lambda(d)+j_a \qforall d>0.
\end{equation}
Indeed, if $\{x_k\}_{k\in \mathcal N}$ attain the maximum for $\Lambda(d)$, then take $x_k'=x_k$ if $k\ne a$ and $x_a'=1$, so that
\[
\sum x_k'i_k=\sum x_ki_k+i_a\le d+i_a\qand \sum x_k'j_k=\sum x_kj_k+j_a=\Lambda(d)+j_a,
\]
yielding the inequality \eqref{lambda2}. 

Fix an integer $0\le i<i_a$. 
    As in \cite[Equation (13)]{gilmore1966theory}, define the function 
    \[
    \delta_i(q)=(qi_a+i)\tau-\Lambda(qi_a+i) \qfor q\in \mathbb Z_{>0}.
    \] 
Note that $\delta_i(q)\ge 0$ for all $q> 0$ by \eqref{lambda}.   Using \eqref{lambda2} we have $$\delta_{i}(q+1)-\delta_i(q)=\Lambda(qi_a+i)+j_a-\Lambda(qi_a+i+i_a)\leq 0$$
 for all $q>0$,  therefore $ \delta_i$ is non-increasing.
    
 By taking $x_a=q$ and $x_k=0$ for $k\ne a$ in the definition of $\Lambda$ we get
\begin{equation}
\label{lambda4}
\Lambda(qi_a+i)\ge qj_a
\end{equation}
and then 
    \[
    \delta_i(q)=(qi_a+i)\tau-\Lambda(qi_a+i)\le (qi_a+i)\frac{j_a}{i_a}-qj_a=\frac{j_ai}{i_a}\leq \frac{j_a(i_a-1)}{i_a}.
    \]
    Since $\delta_i(q)$ is a fraction of the form $b/i_a$ for some $b\in \mathbb{N}$, we see that $\delta_i(q)$ takes only finitely many values as $q$ varies. 
    Since $\delta_i$ is non-increasing, it must be eventually constant, so there exists an integer $N_i$ such that  $\delta_i(q)=\delta_i(q+1)$ for all $q\ge N_i$,  which then implies  
    \[
\Lambda(qi_a+i+i_a)=\Lambda(qi_a+i)+\Lambda(i_a) \qforall q\ge N_i.
\]
Here, we have used the fact that $\Lambda(i_a)=j_a=\tau i_a$, which comes from \eqref{lambda} and \eqref{lambda4} with $i=0$. 

Let $d$ be an integer and write $d=qi_a+i$ for integers $q,i$ with $0\le i<i_a$. 
Setting $N=\max\{i_1, N_0, N_1, \dots, N_{i_a-1}\}$, we get $\Lambda(d+i_a)=\Lambda(d)+\Lambda(i_a)$ for all $d\ge N$. 
\end{proof}

The next proposition provides the main ingredient for the study of the asymptotic behavior of maximal shifts of $R$ in cases when we know that $\Po_\kk^R$ is rational and satisfies certain conditions. 

\begin{proposition}
\label{p:general-additivity}
Let $\Qo_1,\Qo_2, \Po\in \mathcal P$ such that $\Po=\Qo_1\Qo_2^{-1}$ and the following hold: 
\begin{enumerate}
\item $\Qo_1=\sum_{i\ge 0}a_i(tu)^i$ with
\begin{enumerate}
    \item $a_i>0$ for all $i\ge 0$; or \item there exists an integer $r\ge 1$ such that $a_i>0$ for all $i\le r$ and $a_i=0$ for all $i>r$. 
\end{enumerate}
\item $\slant(\Qo_2)$ is attained for the first time at $i^*$ for some $i^*\in \mathbb N$. 
\item When writing $1-\Qo_2=\sum_{i\ge 1}f_i(u)t^i$, the leading coefficient of $f_i(u)$ is non-negative for all $i\ge 1$ and $f_1(u)=0$, $f_2(u)\ne 0$. 
\end{enumerate}
Then $t_i(\Po)=t_{i-i^*}(\Po)+t_{i^*}(\Po)$ for all $i\gg 0$. 
\end{proposition}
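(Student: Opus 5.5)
The plan is to turn $t_i(\Po)$ into the value of an unbounded knapsack problem and then apply \cref{l:UKP}. Write $\Qo_2^{-1}=\sum_{k\ge 0}(1-\Qo_2)^k$ with $1-\Qo_2=\sum_{i\ge 2}f_i(u)t^i$. By (3), every $f_i$ has non-negative leading coefficient. Hence in the expansion of $\Qo_2^{-1}$ no cancellation can occur in top degree: the coefficient of $t^i$ is a sum of products $f_{i_1}\cdots f_{i_k}$ with $i_1+\dots+i_k=i$, and each product has positive leading coefficient. Therefore
\[
t_i(\Qo_2^{-1})=\max\Big\{\sum_p \deg f_{i_p}\colon \sum_p i_p=i,\ i_p\ge 2,\ f_{i_p}\ne 0\Big\}.
\]
Now set $\mathcal N=\{k\ge 2\colon f_k\ne 0\}$, $i_k=k$ and $j_k=\deg f_k$. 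Then $t_i(\Qo_2^{-1})$ is the value of the knapsack problem of \cref{settingUKP}, except that the constraint is the equality $\sum x_ki_k=i$ rather than $\le i$.

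The next step brings in $\Qo_1$ and the slack in the constraint. Multiplying by $\Qo_1=\sum a_i(tu)^i$ with $a_i\ge 0$ again gives no top-degree cancellation, because all leading coefficients are positive. So
\[
t_i(\Po)=\max\{\,s+t_{i-s}(\Qo_2^{-1})\colon 0\le s\le i,\ a_s>0\,\}.
\]
In case (1)(a), where every $a_s>0$, this is exactly
\[
t_i(\Po)=\max_{\sum x_k k\le i}\Big(\sum x_kj_k+\big(i-\sum x_kk\big)\Big)=i+\Lambda'(i),
\]
where $\Lambda'$ is the knapsack function with weights $j_k'=j_k-k$. These weights are non-negative because $\deg f_k\ge k$, since $\Qo_2\in\mathcal P$. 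Maximizing $j_k/i_k$ is equivalent to maximizing $j'_k/i_k$, so the element $a$ in \cref{settingUKP} can be taken to be $i^*$. Its first-time attainment comes from (2), and the supremum is attained by hypothesis. Then \cref{l:UKP} gives $\Lambda'(i)=\Lambda'(i-i^*)+\Lambda'(i^*)$ for $i\gg0$, and therefore $t_i(\Po)=t_{i-i^*}(\Po)+t_{i^*}(\Po)$. Here I use that $t_{i^*}(\Po)=i^*+\Lambda'(i^*)$. The same formula evaluated at $i-i^*$ gives $t_{i-i^*}(\Po)$ as well.

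Case (1)(b), where $a_s>0$ only for $s\le r$, is the main obstacle, because the slack $s$ is capped at $r$. I would handle it with $f_2\ne0$. Since $i_2=2$ is available with $j_2'\ge 0$, any feasible packing with slack $i-\sum x_kk$ can have its slack reduced, two units at a time, by adding copies of $k=2$, without decreasing $\sum x_kj_k'$. Thus for $i$ large the optimum of $\Lambda'(i)$ is achieved with slack in $\{0,1\}\subseteq[0,r]$, and the capped problem has the same value as $\Lambda'(i)$. The delicate point is the equality form: the optimum must be realizable with slack at most $r$, with every copy of $f_2$ actually present and with leading coefficients still positive. Once that holds, $t_i(\Po)=i+\Lambda'(i)$ for $i$ large, and \cref{l:UKP} concludes as before. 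I would also check that for $i-i^*$ large the identity holds at index $i-i^*$ too, which is automatic for $i\gg0$.
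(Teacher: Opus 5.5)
Your proof is correct and follows essentially the paper's route: (3) rules out top-degree cancellation, $t_i(\Po)$ becomes the value of an unbounded knapsack problem, slack is absorbed with copies of the item $k=2$ in case (1)(b), and \cref{l:UKP} finishes; your shifted values $j_k'=j_k-k$ just repackage the paper's extra item $k=1$ with $j_1=1$ (one has $M_i=i+\Lambda'(i)$), which also avoids a tie with that item when $\slant(\Qo_2)=1$. Two small points: the slack-reduction argument in case (1)(b) works for every $i$, not only for $i$ large, and you need this to get $t_{i^*}(\Po)=i^*+\Lambda'(i^*)$ at the fixed index $i^*$; and your ``delicate point'' is immediate, since $f_2\neq 0$ has positive leading coefficient by (3).
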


\begin{proof}
Set $\mathcal N=\{k\in \mathbb N\colon f_k(u)\ne 0\}$. Observe that $1\notin \mathcal N$ and $2\in \mathcal N$, by hypothesis. 
With $\Po=\sum_{i\ge 0}D_i(u)t^i$, expanding the fraction $\Qo_1\Qo_2^{-1}$ as a series yields
\[
D_i(u)=\sum_{\substack{p+q=i\\p,q\ge 0}}a_pu^p\Big(\sum_{\substack{x_k\in \mathbb N\,\,\forall\,\, k\in \mathcal N\\ \sum_{k\in \mathcal N}kx_k=q}}\big(\prod_{k\in \mathcal N} f_k(u)^{x_k}\big)\Big)=\sum_{\substack{x_k\in \mathbb N\,\,\forall\,\, k\in \mathcal N\cup\{1\}\\ \sum_{k\in \mathcal N\cup\{1\}}kx_k=i}}a_{x_1}u^{x_1}\big(\prod_{k\in \mathcal N} f_k(u)^{x_k}\big).
\]
Set $j_1=1$ and $j_k=\deg(f_k)$ for $k\in \mathcal N$. 

Assume first $a_i> 0$ for all $i\ge 0$. Then, in view of our hypotheses (particularly (3), which ensures that there are no cancellations that will lower the degree in the summation in the expression of $D_i$), we have: 
\[
\deg(D_i)=\max_{x_k\in \mathbb N}\sum_{k\in \mathcal N\cup\{1\}} j_kx_k \quad \subjectto \quad \sum_{k\in \mathcal N\cup\{1\}}kx_k=i.
\]
We claim that this maximum is equal to 
\[
M_i=\max_{x_k\in \mathbb N}\sum_{k\in \mathcal N\cup \{1\}} j_kx_k \quad \subjectto \quad \sum_{k\in \mathcal N\cup\{1\}}kx_k\le i.
\]
Indeed, the inequality $\deg(D_i)\le M_i$ is clear. Assume that $\{x_k\}_{k\in \mathcal N\cup\{1\}}$ are such that the maximum is attained in the definition of $M_i$. To show the reverse inequality, we will show $\sum_{k\in \mathcal N\cup\{1\}}kx_k=i$. If $\sum_{k\in \mathcal N\cup\{1\}}kx_k<i$, then with $x'_k=x_k$ if $k>1$ and $x'_1=x_1+1$ we have 
\[
\sum_{k\in \mathcal N\cup\{1\}}kx_k'=1+\sum_{k\in \mathcal N\cup\{1\}}kx_k\le i\qand \sum_{k\in \mathcal N\cup\{1\}}j_kx_k'=M_i+j_1=M_i+1>M_i
\]
a contradiction. 

Assume now that $a_i> 0$ for all $i\le r$ and $a_i=0$ for $i>r$. Our hypotheses give
\[
\deg(D_i)=\max_{x_k\in \mathbb N}\sum_{k\in \mathcal N\cup \{1\}} j_kx_k \quad \subjectto \quad \sum_{k\in \mathcal N\cup \{1\}}kx_k=i \qand x_1\le r.
\]
We claim that this maximum is also equal to $M_i$. 
The inequality $\deg(D_i)\le M_i$ is clear. Conversely, assume that $x_k$ are such that the maximum is attained in the definition of $M_i$. As seen above, we must have $\sum_{k\in \mathcal N\cup\{1\}}kx_k=i$. If $x_1>r$, then let $q=\Big\lceil \frac{x_1-r}{2}\Big\rceil$, so that $2q\ge x_1-r>2q-1$, implying $0\le r-1< x_1-2q\le r$. Then, if we take $x_1'=x_1-2q$, $x_2'=x_2+q$ and $x_k'=x_k$ for $k>2$, we have
\begin{align*}
\sum_{k\in \mathcal N\cup \{1\}}j_kx_k'&=(x_1-2q)+j_2(x_2+q)+\sum_{k\in \mathcal N\smallsetminus\{2\}}j_kx_k\ge \sum_{k\in \mathcal N\cup \{1\}}j_kx_k=M_i \\
\sum_{k\in \mathcal N\cup \{1\}}kx'_k&=\sum_{k\in \mathcal N\cup \{1\}}k x_k=i\qand x'_1\le r
\end{align*}
where we used $j_2\ge 2$, which is a consequence of $\Po\in \mathcal P$.  
Consequently, we must have $\deg(D_i)\ge M_i$ and we conclude equality must hold. 

In both cases, we have $t_i(\Po)=\deg(D_i)=M_i$. The conclusion follows from \cref{l:UKP}. 
\end{proof}

\begin{corollary}
\label{limit}
Assume the hypotheses of \cref{p:general-additivity}. Then 
$$
\slant(\Po)=\lim_{i\rightarrow\infty}\frac{t_i(\Po)}{i}=\lim_{i\rightarrow\infty}\frac{t_i(\Po)-1}{i-1} .$$ 
\end{corollary}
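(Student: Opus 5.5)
The plan is to turn the additivity $t_i(\Po)=t_{i-i^*}(\Po)+t_{i^*}(\Po)$ for $i\ge i_0$, given by \cref{p:general-additivity}, into a Fekete-type limit statement. I would then identify the limit with $t_{i^*}(\Po)/i^*$ and show that this equals $\slant(\Po)$.

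\textbf{Step 1: the limit along residue classes.} Write $c=t_{i^*}(\Po)$. Fix a residue $r$ with $i_0\le r<i_0+i^*$. Iterating the additivity gives
\[
t_{r+qi^*}(\Po)=t_r(\Po)+qc \qquad\text{for all } q\ge 0.
\]
Since $t_r(\Po)$ is finite, it follows that
\[
\frac{t_{r+qi^*}(\Po)}{r+qi^*}\longrightarrow \frac{c}{i^*} \qquad (q\to\infty).
\]
This holds for each of the finitely many residues $r$, so $\lim_i t_i(\Po)/i$ exists and equals $c/i^*$. One point needs checking: $t_r(\Po)\ne-\infty$. In the proof of \cref{p:general-additivity} we have $t_i(\Po)=M_i\ge i$, since taking $x_1=i$ is always feasible. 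So no term is $-\infty$.

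\textbf{Step 2: the limit equals the slant.} Trivially $\lim_i t_i(\Po)/i\le\slant(\Po)$. For the reverse inequality I would use the Knapsack description $t_i(\Po)=M_i=\Lambda(i)$ from that proof, with item weights $i_k=k$ and values $j_k$. Here $j_1=1$ and $j_k=\deg f_k=t_k(\Qo_2)$, taking the relevant sign conventions into account. The bound \eqref{lambda} gives $t_i(\Po)\le\tau i$, where
\[
\tau=\max\{1,\slant(\Qo_2)\}=\slant(\Qo_2),
\]
because $f_2\ne0$ forces $\slant(\Qo_2)\ge1$. Hence $\slant(\Po)\le\tau$.

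To get $c/i^*=\tau$, observe that $a=i^*$ is the first index at which $\slant(\Qo_2)$ is attained, by hypothesis (2). The proof of \cref{l:UKP} then gives $\Lambda(i_a)=j_a=\tau i_a$, so $c=\Lambda(i^*)=\tau i^*$. If instead $\tau=1$ is attained only by $j_1$, the item $a=1$ works, and trivially $t_i=i$ eventually. I also need the index $i_a$ of \cref{l:UKP} to coincide with the $i^*$ of \cref{p:general-additivity}, and I would confirm this there. Combining these, $\slant(\Po)=\tau=c/i^*=\lim_i t_i(\Po)/i$.

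\textbf{Step 3: the second limit.} Since $t_i(\Po)/i$ converges and $(t_i(\Po)-1)/(i-1)-t_i(\Po)/i\to0$ (because $t_i(\Po)/i$ is bounded), the second limit equals the first.

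The main obstacle I expect is Step 2: cleanly matching $i^*$ and $\tau$ between \cref{p:general-additivity} and \cref{l:UKP}. In particular, the degenerate case where the maximal ratio is attained by the item $k=1$ has to be checked separately.
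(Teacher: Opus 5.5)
Your proposal is correct and follows the paper's proof. The paper also fixes an index $\alpha$ beyond the additivity threshold of \cref{p:general-additivity}, computes $\lim_{n\to\infty} t_{\alpha+ni^*}(\Po)/(\alpha+ni^*)=t_{i^*}(\Po)/i^*$ independently of $\alpha$, and obtains the second limit exactly as in your Step 3.

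The paper simply asserts $t_{i^*}(\Po)/i^*=\slant(\Po)$, so your Step 2 supplies a justification the paper omits. The obstacle you anticipate does not actually arise. You only need two facts. First, $t_{i^*}(\Po)=\Lambda(i^*)\ge j_{i^*}=\tau i^*$, which holds because $i^*$ attains $\slant(\Qo_2)=\tau$ by hypothesis (2). Second, $\Lambda(i)\le \tau i$. Together these give $\slant(\Po)\le\tau=t_{i^*}(\Po)/i^*\le\slant(\Po)$, so there is no need to match $i^*$ with the index $a$ of \cref{l:UKP} or to treat $\tau=1$ separately.
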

\begin{proof}
    Set $j^*=t_{i^*}(\Po)$. From \cref{p:general-additivity} we know that there exists $N\in \mathbb{N}$ such that if $i>N$ then $t_{i+i^*}(\Po)=t_i(\Po)+j^*$. Pick an integer $\alpha>N$. We have  $$\lim_{n\rightarrow\infty}\frac{t_{\alpha +ni^*}(\Po)}{\alpha +ni^*}= \lim_{n\rightarrow\infty}\frac{t_{\alpha}(\Po)+nj^*}{\alpha +ni^*}=\frac{j^*}{i^*}$$
    and since the limit is the same for all $\alpha$ we have
\[
\lim_{i\to \infty}\frac{t_i(\Po)}{i}=\frac{j^*}{i^*}=\slant(\Po)\,.
\]
Then 
\[
\lim_{i\to \infty}\frac{t_i(\Po)-1}{i-1}=\lim_{i\to \infty}\left(\frac{t_i(\Po)}{i}-\frac{1}{i}\right)\cdot \frac{i}{i-1}=\lim_{i\to \infty}\frac{t_i(\Po)}{i}\,. \qedhere
\]
\end{proof} 

\begin{theorem}
\label{t:additivity}
Assume there exists a surjective Golod homomorphism  $P\to R$, where $P=S$ (thus $R$ is Golod) or $P=S/J$ is a quadratic complete intersection ring. 
\begin{enumerate}
\item There exists an integer $i^*$ such that \[
t_i^R(\kk)=t_{i-i^*}^R(\kk)+t_{i^*}^R(\kk) \qforall i\gg 0. 
\]
\item ${\displaystyle \slant(R)=\lim_{i\rightarrow\infty}\frac{t_i^R(\kk)}{i}=\lim_{i\rightarrow\infty}\frac{t_i^R(\kk)-1}{i-1}}$.
\end{enumerate}
\end{theorem}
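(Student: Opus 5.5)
Both parts follow from \cref{p:general-additivity} and \cref{limit} applied to $\Po=\Po_\kk^R(t,u)$, once we write it as $\Qo_1\Qo_2^{-1}$ with hypotheses (1)--(3) satisfied. The Golod condition gives
\[
\Po_\kk^R(t,u)=\frac{\Po_\kk^P(t,u)}{1-t(\Po_R^P(t,u)-1)}.
\]
We take $\Qo_2=1-t(\Po_R^P(t,u)-1)$ and $\Qo_1=\Po_\kk^P(t,u)$. Then $\Qo_1,\Qo_2\in\mathcal P$ because $\mathcal P$ is a group and $\Po_\kk^R\in\mathcal P$.

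\textbf{The case $P=S$.} Here $\Qo_1=(1+tu)^n=\sum_{i=0}^n\binom ni (tu)^i$, which satisfies (1)(b) with $r=n$. Also
\[
1-\Qo_2=\sum_{i\ge1}\Big(\sum_j\beta^S_{i,j}(R)u^j\Big)t^{i+1}.
\]
Thus $f_1=0$, and $f_{i+1}(u)$ has nonnegative coefficients, so its leading coefficient is nonnegative. Moreover $f_2(u)=\sum_j\beta_{1,j}^S(R)u^j\neq0$ because $I\ne0$. Finally $\Qo_2$ is a polynomial in $t$, since $\pd_SR<\infty$, so its slant is a maximum over finitely many $i$ and is attained. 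Taking $i^*$ to be the first index where it is attained gives (2). (If $R$ were Koszul, the statement is trivial with $i^*=1$; the general case is handled anyway.)

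\textbf{The case $P=S/J$, with $J$ generated by a regular sequence of $c$ quadrics.} Here $\Po_\kk^P=(1+tu)^n/(1-t^2u^2)^c$. Expanding,
\[
\Qo_1=\sum_i a_i(tu)^i \quad\text{with } a_i>0 \text{ for all } i
\]
provided $n\ge1$, since the product of $(1+z)^n$ and $(1-z^2)^{-c}$ has positive coefficients. This gives (1)(a). The series $1-\Qo_2=t(\Po_R^P-1)$ has nonnegative coefficients, and $f_1=0$. Further, $f_2=\sum_j\beta^P_{1,j}(R)u^j\neq0$ as long as $R\ne P$; if $R=P$, then $R$ is a complete intersection with $\Po_\kk^R$ as above, and the claim can be checked directly, or equivalently one reduces to \cref{p:12}.

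The main obstacle is hypothesis (2) in this case: $\Po_R^P$ is in general an infinite series, so its slant need not be attained. My plan is to use \cref{Golod-def}. Since $P$ is a complete intersection, $\Po_R^P=p(t,u)/\bo_P(t,u)$ with $\bo_P=(1-t^2u^2)^c$ up to simplification; this is also the explicit form of Poincaré series over a quadratic complete intersection. Then
\[
\Qo_2=\frac{\bo_P-t(p-\bo_P)}{\bo_P}=\frac{g(t,u)}{(1-t^2u^2)^c},
\qquad
\Po_\kk^R=\frac{(1+tu)^n}{g(t,u)}
\]
with $g$ a polynomial. By \cref{l:inverse} and \cref{QP}, applied with the factor $(1-t^2u^2)^{-c}$ of slant $1$, the slant of $\Qo_2$ equals that of $g$ when it exceeds $1$. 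In that case it is attained, since $g$ is a polynomial, and first attained at the same index. If instead the slant is $1$, then $t_i(\Qo_2)\le i$ with $t_2=2$, and it is attained at $i^*=2$. With hypotheses (1)--(3) verified, \cref{p:general-additivity} yields part (1) and \cref{limit} yields part (2), using $\slant(\Po_\kk^R)=\slant(R)$.
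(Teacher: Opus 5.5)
Your proposal is correct and follows the paper's proof: the same decomposition $\Qo_1=\Po_\kk^P$, $\Qo_2=1-t(\Po_R^P-1)$, fed into \cref{p:general-additivity} and \cref{limit}. The only difference is how you check hypothesis (2). The paper shows slant is attained for $\Po_\kk^R$ (via \cref{denom-poly}) and transfers this to $\Qo_2=(\Po_\kk^R)^{-1}\Qo_1$ using \cref{l:inverse} and \cref{QP}. You instead read it off directly, from $\Qo_2$ being a polynomial when $P=S$, or $\Qo_2=g\,(1-t^2u^2)^{-c}$ with $g$ a polynomial in the complete intersection case; this rests on the same rationality input and is slightly shorter.
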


\begin{proof}
Set $\Po=\Po_\kk^R(t,u)$. Since the map $P\to R$ is Golod, we have an equality
\[
\Po=\frac{\Po_\kk^P(t,u)}{1-t(\Po_R^P(t,u)-1)}.
\]
Set $\Qo_2=1-t(\Po_R^P(t,u)-1)$ and $\Qo_1=\Po_\kk^P(t,u)$. Since $\mathcal P$ is a group and $\Po$, $\Qo_1\in \mathcal P$, we have $\Qo_2\in \mathcal P$. Furthermore, $\Qo_1$ satisfies condition (1)(b) of \cref{p:general-additivity} (with $r=n$) when $P=S$ and condition (1)(a) when $P$ is a quadratic complete intersection. 

When $\slant(\Po)=1$, the statement is clear. 
 Assume now $\slant(\Po)>1$. We know from \cref{denom-poly} that  $\slant(\Po)$ is attained. 
 Then, \cref{l:inverse} gives $\slant(\Po^{-1})=\slant(\Po)>1$  and  $\slant(\Po^{-1})$ is attained. Further, since $\slant(\Qo_1)=1$,  \cref{QP} gives $\slant(\Qo_2)=\slant(\Po^{-1}\Qo_1)=\slant(\Po^{-1})>1$ 
and $\slant(\Qo_2)$ is attained. Thus, condition (2) of \cref{p:general-additivity} is satisfied. Also, condition (3) is clearly satisfied. Thus, an application of \cref{p:general-additivity} and \cref{limit} yields the desired conclusions. 
\end{proof}


\section{Maximal Slants for Monomial Ideals}
\label{s:monomial}
In this section, we show that for monomial ideals, the slant is not always minimal, and we prove a sharp upper bound for the slants of monomial ideals.

Berglund \cite{berglund2006poincare} gives an explicit description (in terms of the reduced simplicial homology of a certain simplicial complex) of the Poincar\'{e} series of the residue field when $R$ is defined as a quotient of the polynomial ring by a monomial ideal.
We review this result below.

 Let $\kk$ be a field, $I$ a monomial ideal in $S=\kk[x_1,\ldots,x_n]$ and set $R=S/I$. Let $M({I})$ denote the set of minimal monomial generators of ${I}$. 
 
 \begin{notation}
 Let $G$ be a set of monomials in $S$. The set $G$ has an associated gcd-graph where the elements of $G$ are vertices, and there is an edge between  $p,\;q\in G$ if and only if  $\gcd(p,q)\neq 1$. We introduce notation as follows: 
 \begin{itemize}
 \item $m_G:=$ the LCM of elements of $G$, with the convention that $m_{\emptyset}=1$;
 \item $\mathcal C(G):=$ the set of connected components of the gcd-graph of $G$;
 \item $c(G):= |\mathcal C(G)|$;
 \item $K(I):=$ be the set of all nonempty saturated subsets of $M(I)$, where 
 $G$ is said to be {\it saturated} in $M(I)$ if for all $m\in M(I)$ and all $C\in \mathcal C(G)$, $m|m_C$ implies $m\in G$;
 \item For $G\in K(I)$, define a simplicial complex $\Delta_G$ with vertex set $G$ by
 \begin{gather*}
\Delta_G:=\{F\subseteq G\colon m_F\neq m_G \qor\text{$F\cap C$ is disconnected for some $C\in \mathcal C(G)$} \,;
 \end{gather*}
 \item $\tilde{H}_i(\Delta_G;\kk):=$ the $i$'th reduced homology group for $\Delta_G$ with coefficients in $\kk$;
 \item  $\Tilde{H}(\Delta_G;\kk)(t):=\sum_{i\in \mathbb Z} \dim_\kk\tilde{H}_i(\Delta_G;\kk)t^i$. 
  \end{itemize}
 \end{notation}
\begin{theorem}\cite{berglund2006poincare}\label{berglund-theorem}
With notation as above, we have
$\Po_\kk^R(t, u)=\displaystyle \frac{(1+tu)^n}{\bo _R(t,u)}$, with \begin{equation}\label{bd1}
\bo _R(t,u)=1+\sum_{G\in K(I)}u^{\deg(m_G)}(-t)^{c(G)+2}\Tilde{H}(\Delta_G;\kk)(t).
\end{equation}
 \end{theorem}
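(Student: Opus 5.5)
The statement is Berglund's theorem from \cite{berglund2006poincare}, and the plan is to reconstruct his argument in the bigraded setting. There are four steps: reduce to a fine-grading computation, show that only lcm multidegrees contribute, compute each such contribution with a bar construction, and match the result with the reduced homology of $\Delta_G$.

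\emph{Reductions.} First I will polarize $I$. Polarization replaces $I$ by a squarefree monomial ideal in more variables, and the added variables form an $R$-regular sequence of linear forms. Modding out such a sequence multiplies $\Po_\kk^R$ by a power of $(1+tu)$. Polarization also preserves $K(I)$, the gcd-graphs, $\deg m_G$ and the complexes $\Delta_G$. So it suffices to treat the squarefree case, where everything is $\mathbb Z^n$-graded. By Backelin's rationality theorem for monomial rings, $\Po_\kk^R$ has the form $(1+tu)^n/\bo_R$. Moreover $\bo_R$ is the fine-graded series of $\Tor^{A}(\kk,\kk)$, up to the usual sign and shift conventions, where $A=\HH(K^R)$ is the Koszul homology algebra carrying its $A_\infty$-structure. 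For monomial rings this structure can be computed explicitly from the Taylor resolution.

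\emph{Support and splitting.} Next I will show that the fine-graded pieces of $\bo_R-1$ live only in multidegrees $m_G$, that is, in lcm's of subsets of $M(I)$. For each such multidegree I will restrict to the unique saturated subset $G$ having that lcm: a multidegree that is not saturated contributes zero, and otherwise the contribution depends only on $G$. The Taylor complex restricted to divisors of $m_G$ splits as a tensor product over the connected components $C\in\mathcal C(G)$. Heuristically, disjoint supports give tensor-product (Golod-free) behaviour between components. The part of the bar construction that uses all components jointly is what produces the factor $(-t)^{c(G)}$ and the shift by $2$.

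\emph{Main obstacle.} The central step is identifying the multidegree-$m_G$ strand of the reduced bar construction on the $A_\infty$-algebra $A$ with the simplicial chain complex of $\Delta_G$, shifted by $c(G)+2$. Faces of $\Delta_G$ are exactly the subsets $F\subseteq G$ that fail to "reach" $m_G$ in a connected way, meaning either $m_F\ne m_G$ or $F\cap C$ is disconnected for some component $C$. I would try to realize the bar complex as the chain complex of the poset of such decompositions, and then apply a Quillen fiber or discrete Morse argument to collapse it onto $\Delta_G$. Keeping the signs consistent with $(-t)^{c(G)+2}$ will need careful bookkeeping. Summing the contributions over all $G\in K(I)$ then gives \eqref{bd1}.
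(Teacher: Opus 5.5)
The paper does not prove this theorem; it quotes it from \cite{berglund2006poincare}. Your proposal does not supply a proof either, and there is a genuine gap. The one step with real content is identifying the $u^{\deg(m_G)}$-contribution with $(-t)^{c(G)+2}\tilde H(\Delta_G;\kk)(t)$, and it is only announced (``I would try\dots''), with no explicit complex, no comparison map and no sign check. The framework you build around that step is also wrong, so the identification you aim for would be false as stated. From $\Po_\kk^R=(1+tu)^n\Po_\kk^{K^R}$ and quasi-isomorphism invariance, the series of $\Tor^A(\kk,\kk)$ is $\bo_R^{-1}$, not $\bo_R$. So the multidegree-$m_G$ strand of the bar construction on $A$ computes the $m_G$-coefficient of the fine-graded $\bo_R^{-1}$, which is supported on all products of lcm's. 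For example, take $R=\kk[x]/(x^2)$. Then $A=\kk[e]/(e^2)$, and $\Tor^A(\kk,\kk)$ is nonzero in every bidegree $(2k,2k)$, while $\bo_R=1-t^2u^2$. No sign or shift convention repairs this. You would need a separate mechanism that controls the reciprocal of the Tor series.

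Your ``support and splitting'' step also fails, because a given lcm need not come from a unique saturated set. Take $I=(x^2,xy,y^2)$.
\begin{itemize}
\item Both $G_1=\{x^2,y^2\}$ (two components) and $G_2=\{x^2,xy,y^2\}$ (connected) are saturated, with $m_{G_1}=m_{G_2}=x^2y^2$.
\item $\Delta_{G_1}$ is two points and $\Delta_{G_2}$ is the boundary of a triangle. So \eqref{bd1} receives $+t^4u^4$ from $G_1$ and $-t^4u^4$ from $G_2$.
\item These cancel, matching $\bo_R=(1+tu)^2(1-2tu)=1-3t^2u^2-2t^3u^3$.
\end{itemize}
Hence the formula is a sum over $G\in K(I)$ with cancellation between sets of equal lcm; it cannot be read off multidegree by multidegree from ``the'' $G$. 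In the same example, the multidegree-$x^2y^2$ part of the Taylor complex involves $xy$, so it does not split over the components of $G_1$.

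Likewise, the factor $(-t)^{c(G)+2}$ does not come from a ``joint'' part of the bar construction. It reflects multiplicativity over coprime components. By the product formula quoted in the proof of \cref{l6.3}, the $G$-term of \eqref{bd1} equals $\prod_{C\in\mathcal C(G)}u^{\deg(m_C)}(-t)^3\tilde H(\Delta_C;\kk)(t)$. A correct argument has to reproduce this structure at the level of $\bo_R$ itself. A minor point: after polarization, the regular sequence consists of the differences $x_{i,j}-x_{i,1}$, not the new variables themselves.
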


 Our first goal is to establish a formula for $\slant(R)$, using the formula \eqref{bd1}. This will be achieved in \cref{mslant}, after establishing below some preliminaries.

For $G_1,\; G_2 \in K(I)$, we define the product of the their simplicial complexes $$\Delta_{G_1}\cdot \Delta_{G_2}=\{F\subseteq G_1\cup G_2: F\cap G_1\in \Delta_{G_1}\text{ or }F\cap G_2\in \Delta_{G_2}\}\,.$$
It is easy to check that $\Delta_{G_1}\cdot \Delta_{G_2}$ is a simplicial complex. The next statement follows directly from the definitions.
\begin{lemma}\label{l6.1}
    For $G\in K(I)$, if $G=G_1\cup G_2$ with $\gcd(m_{G_1},m_{G_2})=1$, then $\Delta_G=\Delta_{G_2}\cdot\Delta
    _{G_2}$.    \qed
\end{lemma}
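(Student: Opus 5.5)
The statement (with its evident typo corrected to $\Delta_G=\Delta_{G_1}\cdot\Delta_{G_2}$) is a set-theoretic identity between two simplicial complexes on the vertex set $G=G_1\cup G_2$. I would prove it by showing that a subset $F\subseteq G$ lies in $\Delta_G$ if and only if $F\cap G_1\in\Delta_{G_1}$ or $F\cap G_2\in\Delta_{G_2}$. Throughout I would implicitly assume, as the context requires, that $G_1,G_2\in K(I)$; the coprimality hypothesis ensures they are disjoint, since a monomial lying in both would divide both $m_{G_1}$ and $m_{G_2}$.

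\emph{First step: components and lcm's split.} Since $\gcd(m_{G_1},m_{G_2})=1$, no element of $G_1$ shares a variable with any element of $G_2$. Hence the gcd-graph of $G$ is the disjoint union of the gcd-graphs of $G_1$ and $G_2$, and $\mathcal C(G)=\mathcal C(G_1)\sqcup\mathcal C(G_2)$. Similarly, for $F\subseteq G$, write $F_i=F\cap G_i$. Then $m_F=m_{F_1}m_{F_2}$, with the two factors coprime, and likewise $m_G=m_{G_1}m_{G_2}$. Because $m_{F_i}\mid m_{G_i}$ and the supports are disjoint, we get $m_F=m_G$ if and only if $m_{F_1}=m_{G_1}$ and $m_{F_2}=m_{G_2}$.

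\emph{Second step: the disconnectedness condition.} For $C\in\mathcal C(G_1)$ we have $F\cap C=F_1\cap C$, and similarly for $G_2$. So ``$F\cap C$ is disconnected for some $C\in\mathcal C(G)$'' is equivalent to ``$F_1\cap C$ is disconnected for some $C\in\mathcal C(G_1)$, or $F_2\cap C$ is disconnected for some $C\in\mathcal C(G_2)$''.

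\emph{Conclusion.} Combining the two steps, $F\in\Delta_G$ means that either $m_F\ne m_G$ or some $F\cap C$ is disconnected. By the first step, $m_F\neq m_G$ holds exactly when $m_{F_1}\ne m_{G_1}$ or $m_{F_2}\neq m_{G_2}$. Regrouping the disjunction according to the index $i\in\{1,2\}$ gives: for some $i$, either $m_{F_i}\ne m_{G_i}$ or $F_i\cap C$ is disconnected for some $C\in\mathcal C(G_i)$. That is precisely $F_1\in\Delta_{G_1}$ or $F_2\in\Delta_{G_2}$.

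The only point needing care is the convention for the empty set. If $F_i=\emptyset$, then $m_{F_i}=1\ne m_{G_i}$ (as $G_i$ is nonempty and $I\subseteq\m^2$), so $\emptyset\in\Delta_{G_i}$, consistent with both sides. The lcm-splitting in the first step is the main thing to verify carefully; everything else is bookkeeping.
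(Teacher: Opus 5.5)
Your proof is correct and is exactly the direct unwinding of the definitions that the paper leaves to the reader (the paper only says the statement ``follows directly from the definitions''). You also read the typo correctly as $\Delta_{G_1}\cdot\Delta_{G_2}$, and your splitting of both the lcm condition and the component condition along $G=G_1\sqcup G_2$ is precisely what is needed. Your standing assumption $G_1,G_2\in K(I)$ in fact follows from the hypotheses when both are nonempty, since every component of $G$ lies in $G_1$ or in $G_2$; in any case your argument never uses saturation.
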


We set  $\tilde{K}(I)=\{G\in K(I)\colon \Tilde{H}(\Delta_G;\kk)(t)\ne 0\}$ and for each $G\in \tilde{K}(I)$ we set
\[
h(G):=\min\{i\colon \tilde{H}_i(\Delta_G;\kk)\ne 0\} \,.
\]
We define a function $F\colon \tilde{K}(I)\to\mathbb{Q}$ by $$F(G)=\frac{\deg(m_G)}{2+c(G)+h(G)}.$$

\begin{lemma} \label{l6.3}
Set $\mathbf{m}=\max\{F(G)\colon G\in \tilde{K}(I)\}$. If $G\in \tilde{K}(I)$ is such that $F(G)=\mathbf{m}$, then $F(C)=\mathbf{m}$ for all $C\in \mathcal C(G)$. 
\end{lemma}
\begin{proof}Set $r=c({G})$ and let ${G}=C_1\cup C_2\cup \cdots \cup C_r$, where $C_i$ are connected components of the gcd-graph of $G$. Since $G\in K(I)$, we see that $C_i\in K(I)$ for all $i$ as well. With this decomposition, using \cref{l6.1}, we have $\Delta_{G}= \Delta_{C_1}\cdot \Delta_{C_2}\cdots \Delta_{C_r} $
and $m_{G}=m_{C_1} m_{C_2}\cdots m_{C_r}$.
From  \cite[Section 2.1]{berglund2006poincare} we know that \[
\Tilde{H}(\Delta_{G};\kk)(t) = t^{2r-2}\Tilde{H}(\Delta_{C_1};\kk)(t)\cdots \Tilde{H}(\Delta_{C_r};\kk)(t).
\]
Since $\tilde{H}(\Delta_G,\kk)(t)\ne 0$, we have  $\Tilde{H}(\Delta_{C_i};\kk)(t)\neq 0$ for all $i$, and thus $C_i\in \tilde{K}(I)$ for all $i$, and $h(G)=2r-2+\sum_{i=1}^rh(C_i)$. Also, observe that  $\deg(m_{G})=\sum_{i=1}^r\deg(m_{C_i})$ and $c(G)=r=\sum_{i=1}^rc(C_i)$, and hence 
\begin{equation}
\label{FG}
2+c(G)+h(G)=\sum_{i=1}^r\left(2+c(C_i)+h(C_i)\right)\,.
\end{equation}
Since $G$ maximizes $F$, we have
\begin{equation}
\label{FCi}
F(C_i)= \frac{\deg(m_{C_i})}{2+c(C_i) +h(C_i)}\le \frac{\deg(m_{G})}{2+c(G)+h(G)}=F(G) \qforall i\in [r]\,.
\end{equation}
We then have
\begin{align*}
\deg(m_{G})=\sum_{i=1}^r\deg(m_{C_i})\le \deg(m_{G})\cdot \sum_{i=1}^n\frac{(2+c(C_i)+h(C_i))}{2+c(G)+h(G)}=\deg(m_{G})
\end{align*}
where the inequality comes from \eqref{FCi} and the last equality uses \eqref{FG}. The inequality must be an equality, forcing all inequalities in \eqref{FCi} to be equalities. 
\end{proof}

\begin{proposition}\label{mslant}
If $I$ is a monomial ideal of $S=\kk[x_1,\dots, x_n]$ and $R=S/I$, then
\[
\slant(R)=\max\{F(G)\colon G\in \tilde{K}(I)\}=\max\{F(G)\colon  G\in \tilde{K}(I)\,\,{\rm  and }\,\, c(G)=1\}.
\]
\end{proposition}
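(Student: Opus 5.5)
By \cref{berglund-theorem} we have $\Po_\kk^R=(1+tu)^n/\bo_R$ with $\bo_R\in\mathbb Z[t,u]$, so \cref{denom-poly} gives $\slant(R)=\slant(\bo_R)$ as soon as $R$ is not Koszul; if $R$ is Koszul, both sides will turn out to equal $1$, and this case will be checked separately. The plan is therefore to compute $\slant(\bo_R)$ directly from \eqref{bd1} and identify it with $\mathbf m=\max\{F(G)\colon G\in\tilde K(I)\}$. The second equality in the statement then follows at once from \cref{l6.3}: if $G$ attains $\mathbf m$, then any connected component $C\in\mathcal C(G)$ lies in $\tilde K(I)$, has $c(C)=1$, and satisfies $F(C)=\mathbf m$.

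Write $\bo_R=1+\sum_i B_i(u)t^i$. By \eqref{bd1}, the coefficient $B_i(u)$ is a sum over pairs $(G,j)$ with $G\in K(I)$, $\tilde H_j(\Delta_G;\kk)\neq0$ and $c(G)+2+j=i$, of $\pm\dim_\kk\tilde H_j(\Delta_G;\kk)\,u^{\deg m_G}$. The sign is $(-1)^{i}$, which depends only on $i$. Hence there is no cancellation among these terms, and
\[
t_i(\bo_R)=\max\{\deg m_G\colon G\in\tilde K(I),\ \tilde H_{i-2-c(G)}(\Delta_G;\kk)\neq 0\}.
\]
For every such $G$ we have $j=i-2-c(G)\ge h(G)$, so $i\ge 2+c(G)+h(G)$ and therefore
\[
\frac{\deg m_G}{i}\le F(G)\le \mathbf m .
\]
This gives $\slant(\bo_R)\le\mathbf m$. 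For the reverse inequality, take $G$ attaining $\mathbf m$ and set $i=2+c(G)+h(G)$. Then $G$ contributes a nonzero term of degree $\deg m_G$ to $B_i$, so $t_i(\bo_R)\ge \deg m_G=\mathbf m\, i$. Hence $\slant(\bo_R)=\mathbf m$, and the maximum is attained.

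The remaining care is the Koszul case, and more generally the degenerate situations where \cref{denom-poly} is not directly quoted. If $\tilde K(I)$ were empty, then $\bo_R=1$, contradicting the assumption that $R$ is singular; so $\tilde K(I)\neq\emptyset$. If $R$ is Koszul, then $I$ is generated by squarefree-or-not quadratic monomials. Every $G\in\tilde K(I)$ contributes to $B_i$ with $\deg m_G\le i$, since $\slant(\bo_R)=1$ exactly in the Koszul case by the argument in \cref{denom-poly}. Thus $\mathbf m=\slant(\bo_R)\le 1$. Equality $\mathbf m=1$ follows from $\deg m_G\ge i$ for the contributing term, because $\bo_R\in\mathcal P$. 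So both sides equal $1=\slant(R)$.

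I expect the main obstacle to be the no-cancellation claim. It needs the observation that the sign $(-t)^{c(G)+2}t^j$ yields $(-1)^i$ uniformly, together with the fact that all homology dimensions are nonnegative.
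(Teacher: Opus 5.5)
\textbf{There is a genuine gap in the lower bound, and it comes from a sign error.} The term of \eqref{bd1} indexed by $G$ is $u^{\deg m_G}(-1)^{c(G)+2}\sum_j\dim_\kk\tilde H_j(\Delta_G;\kk)\,t^{c(G)+2+j}$. So its contribution to the coefficient of $t^i$ carries the sign $(-1)^{c(G)}$, not $(-1)^i$. Already for $I=(x^2)$ one gets $\bo_R=1-t^2u^2$. Terms coming from sets $G$ with $c(G)$ of different parity can therefore cancel, and your formula $t_i(\bo_R)=\max\{\deg m_G\colon \tilde H_{i-2-c(G)}(\Delta_G;\kk)\neq 0\}$ is false.

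A concrete instance: take $I=(x^2,xy,y^2)\subset\kk[x,y]$, where $\bo_R=1-3t^2u^2-2t^3u^3$.
\begin{itemize}
\item The connected set $G=\{x^2,xy,y^2\}$ has $\Delta_G$ equal to the boundary of a triangle, so $h(G)=1$, and it contributes $-t^4u^4$.
\item The disconnected saturated set $G'=\{x^2,y^2\}$ has $\Delta_{G'}$ equal to two points, so $h(G')=0$, and it contributes $+t^4u^4$.
\end{itemize}
Both have $F=1=\mathbf m$ and $2+c+h=4$, yet $B_4=0$. So your step ``take $G$ attaining $\mathbf m$, set $i=2+c(G)+h(G)$; then $t_i(\bo_R)\ge \deg m_G$'' fails for either of them. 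Your upper bound $\slant(\bo_R)\le\mathbf m$ is fine, since it only uses $t_i(\bo_R)\le\max\{\cdots\}$, and that inequality survives cancellation.

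\textbf{The missing idea is how to choose the maximizer so that no cancellation can occur.} This is where \cref{l6.3} is really needed, not only for the second equality. Let $j^*$ be the minimum of $\deg m_G$ over all $G\in\tilde K(I)$ with $F(G)=\mathbf m$, and put $i^*=j^*/\mathbf m$.
\begin{itemize}
\item Any $G'$ contributing to the coefficient of $t^{i^*}u^{j^*}$ satisfies $i^*\ge 2+c(G')+h(G')$, hence $F(G')\ge\mathbf m$. So $G'$ is a maximizer of degree $j^*$ with $2+c(G')+h(G')=i^*$.
\item If such a $G'$ were disconnected, \cref{l6.3} would make its components maximizers of smaller degree, contradicting the minimality of $j^*$.
\item Hence every contributing $G'$ has $c(G')=1$, and all signs equal $-1$.
\end{itemize}
The coefficient is therefore $-\sum\dim_\kk\tilde H_{h(G')}(\Delta_{G'};\kk)\neq 0$, giving $t_{i^*}(\bo_R)=j^*$ and $\slant(\bo_R)=\mathbf m$.

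Finally, your separate Koszul paragraph is unnecessary. \cref{denom-poly} gives $\slant(R)=\slant(\bo_R)$ with no non-Koszul hypothesis; that hypothesis only concerns where the slant is first attained. As written, that paragraph also relies on the same false no-cancellation claim.
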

\begin{proof}
The second equality follows from \cref{l6.3}. We prove the first equality.  
Set $\mathbf{m}=\max\{F(G)\colon G\in \tilde{K}(I)\}$. 
By \cref{denom-poly}, we have $\slant(R)=\slant(\bo_R)$, with $\bo_R$ as in \eqref{bd1}. We need to show thus $\slant(\bo_R)=\mathbf{m}$. Since $\bo_R$ is a polynomial, $\slant(\bo_R)$ is attained. From $\eqref{bd1}$, we see that for all $i$ we have
\begin{align}
\label{tb}
t_i(\bo_R)&=\deg\Big(\sum_{G\in \tilde{K}(I)}(-1)^{c(G)}\dim_\kk(\tilde{H}_{i-c(G)-2}(\Delta_G;\kk))u^{\deg(m_G)}\Big)\,.
\end{align}
Note that $\tilde{H}_{i-c(G)-2}(\Delta_G,\kk)\ne 0$ implies $i\ge 2+c(G)+h(G)$. Thus, if $\slant(\bo_R)$ is attained at $i$ and $t_i(\bo_R)=\deg(m_G)$ for some $G\in \tilde{K}(I)$, we have
$$
\slant(\bo _R)=\frac{t_i(\bo_R)}{i}=\frac{\deg(m_G)}{i}\le\frac{\deg(m_G)}{2+c(G)+h(G)}=F(G)\le \mathbf{m}\,.
$$
Set 
\[
j^*=\min\{\deg(m_G)\colon G\in \tilde{K}(I)\text{ and } F(G)=\mathbf{m}\} \qand i^*=\frac{j^*}{\mathbf m}
\]
so that $\mathbf{m}=j^*/i^*$, and  define
\begin{equation}
\label{U}
\mathcal U=\{ G\in \Tilde{K}(I): \deg(m_G)=j^*\text{ and } F(G)=\mathbf{m} \}\,.
\end{equation}
If the gcd-graph of $G\in \mathcal U$ is not connected and $C$ is one of the connected components of $G$, then \cref{l6.3} implies that $F(C)=\mathbf{m}$. Since $\deg(m_{C})<\deg(m_G)=j^*$, this contradicts the choice of $j^*$. Therefore, for all $G\in \mathcal U$ we have $c(G)=1$.

We claim that $t_{i^*}(\bo_R)=j^*$,  which shows that $\slant(\bo_R)=\mathbf{m}$ (attained at $i^*$). To see that $t_{i^*}(\bo_R)\le j^*$, observe that if $G\in \tilde{K}(I)$ is such that $\tilde{H}_{i^*-c({G})-2}(\Delta_{G},\kk)\ne 0$, then $i^*\ge 2+c(G)+h(G)$, and the inequality $F(G)\le \mathbf{m}$ implies 
     \begin{equation}
     \label{degj}
     \deg(m_G)=F(G)\cdot (2+c(G)+h(G))\le \frac{j^*}{i^*}\cdot  (2+c(G)+h(G))\le j^*. 
     \end{equation}
This shows that the summation in \eqref{tb} does not contain any terms with degree larger than $j^*$. 
To show that $t_{i^*}(\bo_R)=j^*$, we need to show that the coefficient of $u^{j^*}$ in the summation in \cref{tb} is nonzero. Indeed, note that $\deg(m_G)=j^*$ in \eqref{degj} implies   $F(G)=j^*/i^*=\mathbf{m}$, and hence $G\in \mathcal U$. Conversely, if $G\in \mathcal U$, then $\deg(m_G)=j^*$ and  $2+c(G)+h(G)=i^*$. Since $c(G)=1$ for all $G\in \mathcal U$, as established above, we see that the coefficient of $u^{j^*}$ in the summation in \eqref{tb} is 
\[
\sum_{G\in \mathcal U}(-1)^{c(G)}\dim_\kk \tilde{H}_{h(G)}(\Delta_G;\kk)=-\sum_{G\in \mathcal U}\dim_\kk \tilde{H}_{h(G)}(\Delta_G;\kk)\ne 0\,. \qedhere
\]  
\end{proof}
Before moving on to the main result of this section, we need the following lemma.
\begin{lemma}\label{mfb} Let $n,\;d\in \mathbb{N}$, where $n,\;d>1$. Define a function $f:\mathbb{N}\longrightarrow \mathbb{R}$, $$f(x)=\frac{x}{\lceil \frac{x-1}{d-1}\rceil +1}.$$
Then $$\max\{f(x)\colon 1\leq x\leq nd, x\in \mathbb N\}=\frac{(d-1)\lfloor \frac{nd-1}{d-1}\rfloor +1}{n+\lfloor\frac{n-1}{d-1} \rfloor +1}\,.$$
\end{lemma}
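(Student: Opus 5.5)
The plan is to reduce the maximization to a comparison among the right endpoints of the intervals on which the denominator is constant. Write $m=\lceil \frac{x-1}{d-1}\rceil$, so that the denominator of $f(x)$ is $m+1$.

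\emph{Step 1: fix the value of $m$.} For $x\ge 2$, the equality $\lceil \frac{x-1}{d-1}\rceil=m$ with $m\ge 1$ holds exactly when
\[
(m-1)(d-1)+2\le x\le m(d-1)+1 .
\]
On this block the denominator is the constant $m+1$, so $f$ is increasing in $x$. The maximum over the block is therefore attained at its right endpoint $x=m(d-1)+1$, or at $x=nd$ if that comes first. The case $x=1$ gives $f(1)=1$, which is dominated, for example by $f(d)=d/2\ge 1$.

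\emph{Step 2: compare the full blocks.} At right endpoints we get
\[
g(m)=\frac{m(d-1)+1}{m+1}=(d-1)-\frac{d-2}{m+1}.
\]
This is nondecreasing in $m$ (strictly increasing if $d>2$). So among the complete blocks, the last one lying inside $[1,nd]$ wins. That block has index $m_0=\lfloor \frac{nd-1}{d-1}\rfloor$, the largest $m$ with $m(d-1)+1\le nd$, and it gives the value
\[
f\big(m_0(d-1)+1\big)=\frac{(d-1)m_0+1}{m_0+1}.
\]

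\emph{Step 3: handle the partial block.} It remains to consider the final partial block, where $m_0(d-1)+1<x\le nd$. There the denominator is $m_0+2$, and since $x\le nd<(m_0+1)(d-1)+1$ we have
\[
f(x)\le \frac{nd}{m_0+2}.
\]
We must compare this with $\frac{(d-1)m_0+1}{m_0+1}$. Writing $nd-1=m_0(d-1)+s$ with $0\le s<d-1$, the bound $\frac{m_0(d-1)+1+s}{m_0+2}$ is at most $\frac{m_0(d-1)+1}{m_0+1}$. This is equivalent to
\[
s(m_0+1)\le m_0(d-1)+1,
\]
which holds because $s\le d-2$ and $m_0\ge 1$.

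\emph{Step 4: rewrite the denominator.} Finally we need $m_0+1=n+\lfloor\frac{n-1}{d-1}\rfloor+1$. Since $nd-1=n(d-1)+(n-1)$, we get
\[
\Big\lfloor\frac{nd-1}{d-1}\Big\rfloor=n+\Big\lfloor\frac{n-1}{d-1}\Big\rfloor,
\]
which matches the stated formula.

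The main obstacle is Step 3, the comparison with the truncated last block. That is where the off-by-one bookkeeping with floors matters. I will verify it through the inequality $s(m_0+1)\le m_0(d-1)+1$ and double-check small cases such as $d=2$ (where $s=0$) and $n<d$.
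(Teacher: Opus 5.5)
Your strategy is the same as the paper's: $f$ is monotone on the blocks where the ceiling is constant, $g(m)$ increases along right endpoints, and you finish by comparing the last full right endpoint $(d-1)m_0+1$ with the truncated block ending at $nd$. Steps 1, 2 and 4 are fine. Your reduction in Step 3 to $s(m_0+1)\le m_0(d-1)+1$ is exactly the paper's inequality $rK\le (d-1)K-d+2$, with $r=s$ and $K=m_0+1$.

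The gap is in how you justify that last inequality. From $s\le d-2$ you only get $s(m_0+1)\le (d-2)(m_0+1)$. But $(d-2)(m_0+1)\le (d-1)m_0+1$ is equivalent to $m_0\ge d-3$, and this does not follow from $m_0\ge 1$. For example, $n=2$, $d=10$ gives $m_0=2$, and then $(d-2)(m_0+1)=24>19=(d-1)m_0+1$. Since $m_0=n+\lfloor\frac{n-1}{d-1}\rfloor$, your estimate is valid when $n\ge d$ (then $m_0\ge n+1$). It breaks down when $n<d-3$, because then $m_0=n$. This is why the paper splits into the cases $n<d$ and $n\ge d$.

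The missing ingredient is already in your Step 4. From $nd-1=n(d-1)+(n-1)$, the number $s$ is also the remainder of $n-1$ modulo $d-1$. Hence $s\le n-1\le m_0-1$. Using this together with $s\le d-2$ gives
\[
s(m_0+1)=s\,m_0+s\le (d-2)m_0+(m_0-1)=(d-1)m_0-1<(d-1)m_0+1 .
\]
With this one-line repair your argument is complete, and it even avoids the paper's case distinction.
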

\begin{proof}
Let $0\le k \le \lfloor \frac{nd-1}{d-1}\rfloor-1$ and
consider $k(d-1)+1\le x \le (k+1)(d-1)+1$. We claim that the maximum of $f(x)$ for $x$ in this interval is achieved at the right endpoint, $x=(k+1)(d-1)+1$.

To prove the claim, we observe that 
$$
\frac{k(d-1)+1}{k+1}=f(k(d-1)+1)\ge f(k(d-1)+2)=\frac{k(d-1)+2}{k+2}
$$
and for $2\le i \le d-2,$
$$
\frac{k(d-1)+i-1}{k+2}=f(k(d-1)+i)\le f(k(d-1)+i+1)=\frac{k(d-1)+i}{k+2}.
$$
That is, for values of $x$ in the specified interval, $f$ decreases from $k(d-1)+1$ to $k(d-1)+2$, and then increases throughout the rest of the interval.

To find the maximum, we compare the values at the endpoints of the interval:
$$\frac{k(d-1)+1}{k+1}=f(k(d-1)+1)\le 
f((k+1)(d-1)+1)=\frac{(k+1)(d-1)+1}{k+2}.$$

The above calculation also shows that $f$ is an increasing function on the set of right endpoints $\{k(d-1)+1:k\in \mathbb{N}\}$. 
It follows that $\max\{f(x): 1\le x \le nd\}$ is achieved at $x_1:=(d-1)\lfloor \frac{nd-1}{d-1}\rfloor +1$ or at $x_2:=nd$ ($x_1$ is the largest integer of the form $k(d-1)+1$ in the range from $1$ to $nd$). We claim that the maximum is achieved at $x_1$. If $r$ denotes the remainder when $n-1$ is divided by $d-1$, we have $x_1=x_2-r$. Assume $r>0$ (otherwise $x_1=x_2$ and we are done).

We have $\frac{x_1-1}{d-1}=\lfloor \frac{nd-1}{d-1}\rfloor$. Letting $K:=\lfloor \frac{nd-1}{d-1}\rfloor+1=n+\lfloor \frac{n-1}{d-1}\rfloor +1$, we have 
$
f(x_1)=\frac{x_1}{K},
$
and
$f(x_2)=\frac{x_2}{K+1}$, so 
$f(x_2)\le f(x_1)$ is equivalent to $(x_2-x_1)K\le x_1$, i.e. 
\begin{equation}\label{ineqq}
rK \le (d-1)K-d+2\,.
\end{equation}
If $n<d$, we have $K=n+1$, $r=n-1$, and the inequality (\ref{ineqq}) is clearly true.
If $n\ge d$, we have $r\le d-2$ and $K\ge n+2$, which gives
$$
rK \le (d-2)K=(d-1)K-K\le (d-1)K-n-2 \le (d-1)K-d+2
$$
therefore inequality (\ref{ineqq}) holds again.
\end{proof}

\begin{theorem}\label{main1} 
Let $d>1$ be an integer. Let $I$ be a monomial ideal in  $S=\kk[x_1, \ldots, x_n]$ with $I\subseteq S_{\geqslant 2}$ and $m(I)\leq d$,  and set $R=S/I$.
Let $\ell$ and $r_0$ denote the quotient and remainder, respectively, when $n-1$ is divided by $d-1$. 
Then 
\begin{equation}\label{maxslant1}\mathrm{slant}(R)\le \displaystyle \frac{nd-r_0}{n+\ell +1} =\frac{(d-1)\lfloor \frac{nd-1}{d-1} \rfloor +1}{n+\lfloor \frac{n-1}{d-1}\rfloor +1}.\end{equation}

Equality  is achieved in (\ref{maxslant1}) if
$
I=(x_1^d, \ldots, x_{n-1}^d, m_0, \ldots, m_{\ell})
$
where $m_j=x_{jd-j+1}\cdots x_{jd-j+d}$ for $0\le j \le \ell -1$ and $m_{\ell}=x_{n-r_0}\cdots x_{n-1}x_n^{d-r_0}.$
\end{theorem}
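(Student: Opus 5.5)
The plan is to bound $F(G)$ using \cref{mslant} and \cref{mfb}, and then to check that the displayed ideal attains the bound. By \cref{mslant}, $\slant(R)=\max\{F(G)\colon G\in\tilde K(I),\ c(G)=1\}$. So it suffices to show that for every connected $G\in\tilde K(I)$, writing $x=\deg(m_G)$, we have
\[
F(G)=\frac{x}{3+h(G)}\le f(x)=\frac{x}{\lceil\frac{x-1}{d-1}\rceil+1}
\qquad\text{and}\qquad 1\le x\le nd.
\]
Once this holds, \cref{mfb} gives the bound \eqref{maxslant1}. The identity $\frac{nd-r_0}{n+\ell+1}=\frac{(d-1)\lfloor\frac{nd-1}{d-1}\rfloor+1}{n+\lfloor\frac{n-1}{d-1}\rfloor+1}$ is arithmetic: from $n-1=\ell(d-1)+r_0$ one gets $\lfloor\frac{nd-1}{d-1}\rfloor=n+\ell$.

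The bound $x\le nd$ is immediate. Each minimal generator has degree at most $m(I)\le d$, so every variable appears in $m_G$ with exponent at most $d$. For the other inequality, I need $\lceil\frac{x-1}{d-1}\rceil\le h(G)+2$. I will use two facts.

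(a) If every subset of $G$ of size at most $h+2$ is a face of $\Delta_G$, then $\Delta_G$ contains the full $(h+1)$-skeleton of the simplex on $G$. Hence $\tilde H_i(\Delta_G;\kk)=0$ for all $i\le h$. Applying this with $h=h(G)$, and noting $\tilde H_{h(G)}(\Delta_G;\kk)\ne 0$, there is a non-face $F\subseteq G$ with $|F|\le h(G)+2$.

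(b) By the definition of $\Delta_G$ with $c(G)=1$, a non-face $F$ satisfies $m_F=m_G$ and $F$ is connected in the gcd-graph. Ordering the elements of $F$ along a spanning tree, each new generator shares at least one variable with the lcm of the previous ones. So it raises the degree of the lcm by at most $d-1$, which gives $x=\deg(m_F)\le (d-1)|F|+1$.

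Combining (a) and (b) gives $\lceil\frac{x-1}{d-1}\rceil\le|F|\le h(G)+2$, which is what is needed. The step I expect to need the most care is (b). The reduction from $\Delta_G$ to a small connected set with the same lcm is the heart of the bound, and the degree-growth argument must use connectivity rather than merely counting generators.

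For the equality case, take $G=M(I)$. It has $n-1+\ell+1=n+\ell$ elements and
\[
\deg(m_G)=(n-1)d+(d-r_0)=nd-r_0 .
\]
Consecutive $m_j$ share a variable: $m_{j-1}$ ends at $x_{jd-j+1}$, which is where $m_j$ starts. Each $x_i^d$ shares $x_i$ with some $m_j$. Hence $G$ is connected, and $G$ is trivially saturated in $M(I)$. I then claim that $\Delta_G$ is the boundary of the full simplex on $G$, i.e. every proper subset is a face. Removing $x_i^d$ with $i<n$ lowers the $x_i$-exponent of the lcm, since all the $m_j$ are squarefree in $x_1,\dots,x_{n-1}$. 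Removing $m_\ell$ lowers the exponent of $x_n$. Removing $m_j$ with $j<\ell$ should disconnect the chain, or else drop a variable that occurs only in $m_j$. This needs a short check of the overlaps, including the boundary case $r_0=0$ and the possibility that $m_{\ell}$ overlaps $m_{\ell-1}$ in more than one variable.

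Granting this, $\Delta_G\cong S^{n+\ell-2}$, so $h(G)=n+\ell-2$ and $F(G)=\frac{nd-r_0}{n+\ell+1}$. By \cref{mslant} this shows that equality holds in \eqref{maxslant1}.
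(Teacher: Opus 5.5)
Your proof is correct, and for the upper bound it takes a genuinely different route from the paper. The paper looks at the index $i_0$ where $\slant(\bo_R)$ (hence $\slant(R)$, by \cref{denom-poly}) is first attained, and writes $j_0=t_{i_0}^R(\kk)=\deg(m_G)\le nd$. It then imports the external theorem that monomial rings have minimal rate \cite{eisenbud1994initial} to get $\frac{j_0-1}{i_0-1}\le d-1$, i.e.\ $i_0\ge\lceil\frac{j_0-1}{d-1}\rceil+1$, and applies \cref{mfb}. You stay entirely inside Berglund's formula: \cref{mslant} reduces you to connected $G\in\tilde K(I)$, and you then use two observations. First, a non-face of size at most $h(G)+2$ must exist, because reduced homology in degree $h(G)$ only sees the $(h(G)+1)$-skeleton. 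Second, a connected non-face $F$ with $m_F=m_G$ has $\deg(m_G)\le (d-1)|F|+1$. Together these give exactly $\deg(m_G)-1\le (d-1)(i-1)$ at the index $i=3+h(G)$ where $G$ contributes. In effect you reprove, combinatorially, the instance of minimal rate that the paper quotes. Your proof is self-contained modulo \cref{berglund-theorem}, while the paper's is shorter but relies on the cited minimal-rate theorem.

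The equality part is the same as the paper's. The check you deferred goes through, but the mechanism is always disconnection, never a drop of the lcm: all $x_i^d$ with $i<n$, and $m_\ell$ (which carries the top power of $x_n$), survive removal of $m_j$ for $j<\ell$. For $1\le j\le \ell-1$, the supports of $\{m_0,\dots,m_{j-1}\}$ and $\{m_{j+1},\dots,m_\ell\}$ are disjoint, and each $x_i^d$ meets at most one of them. For $j=0$, the vertex $x_1^d$ becomes isolated. The paper only lists $m_1,\dots,m_{\ell-1}$ at this point, so including $m_0$, as you do, is the more careful version. Also, $m_{\ell-1}$ and $m_\ell$ share exactly the one variable $x_{n-r_0}$, so the extra overlap you worry about does not occur.
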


\begin{proof} Observe that $ r_0 = (n-1)-\lfloor \frac{n-1}{d-1} \rfloor(d-1)$ and $\ell =\lfloor \frac{n-1}{d-1} \rfloor$. 

Let $b_R(t,u)$ be the polynomial defined in \cref{bd1} and let $i_0$ be such that
$\slant(\bo _R)$ is attained for the first time at $i_0$. Note that $i_0\ne 1$, because $t_1(\bo_R)=-\infty$, as noted in the proof of  \cref{denom-poly}. \cref{denom-poly} implies that $\slant(R)$ is also attained at $i_0$ (for the first time, when $R$ is not Koszul).  Let $j_0=t_{i_0}^R(k)$. Since monomial ideals have minimal rate, we have $\displaystyle \frac{j_0-1}{i_0-1} \le \mathrm{rate}(R)= d-1.$ This implies 
\begin{equation}\label{fraction1} i_0\ge \lceil \frac{j_0-1}{d-1} \rceil + 1, \ \ \mathrm{
and \ therefore}\ 
\ 
\frac{j_0}{i_0}\le \frac{j_0}{\lceil \frac{j_0-1}{d-1}\rceil +1}.
\end{equation}
Since $j_0=\mathrm{deg}(m_G)$ for some set $G$ of monomial generators of $I$, and $m(I)\le d$, we have $j_0\le nd$. \cref{mfb} now gives the desired upper bound for $\mathrm{slant}(R)$. 

Now, we will show that these upper bounds for the slants are actually achieved. Set $G_{n,d}:=\{x_1^d, \ldots, x_{n-1}^d, m_0, \ldots, m_{\ell}\}$.  If we show \begin{equation}\label{maxsaturated1}
 F(G_{n,d})=\displaystyle \frac{(d-1)\lfloor \frac{nd-1}{d-1}\rfloor +1}{n+\lfloor \frac{n-1}{d-1} \rfloor +1}    
\end{equation}
then \eqref{maxslant1} and \cref{mslant} will imply $I$ achieves the maximum slant.

We have $m_{G_{n,d}}=x_1^d\cdots x_{n-1}^d\cdot x_n^{d-r_0}$ and $\deg(m_{G_{n,d}})=nd-r_0=(d-1)\lfloor \frac{nd-1}{d-1}\rfloor +1$. It is clear that the gcd-graph of $G_{n, d}$ is connected, and thus $c(G_{n,d})=1$.

We claim that every proper subset of $G_{n, d}$ is a face of $\Delta_{G_{n, d}}$. 
Indeed, if $G'\subseteq G_{n, d}$ does not contain one of $x_1^d, \ldots, x_{n-1}^d,$ or $m_{\ell}$, then $m_{G'}\ne m_{G_{n, d}}$, and if $G'$ does not contain one of $m_1, \ldots, m_{\ell -1}$ then the gcd-graph of $G'$ is disconnected. Also, $G_{n,d}\notin \Delta_{G_{n,d}}$, and hence \begin{equation}
    \tilde{H}_i(\Delta_{G_{n,d}};k)=\begin{cases}
     0 & \text{ if } i\neq |G_{n,d}|-2\\
     1 & \text{ if }i=|G_{n,d}|-2
    \end{cases}
\end{equation} Thus, $h(G_{n, d})=|G_{n, d}|-2=n+\ell -2=n+\lfloor \frac{n-1}{d-1} \rfloor-2$, hence  \eqref{maxsaturated1} holds.
\end{proof}

A natural question is whether the ideal $I$ from \cref{main1}, for which the slant of $R=S/I$ is maximal, is unique with this property. We are able to prove such a statement for the case $n=d$ is an odd integer.

\begin{theorem}\label{T6.8}
     Let $I$ be a monomial ideal in $S=\kk[x_1, \dots, x_n]$ with $I\subseteq S_{\geqslant 2}$,  $m(I)\leq n$ and $n$ is odd. If $R=S/I$ has maximum possible slant (given by \cref{maxslant1}), then $$I=(x_1^n, \ldots, x_{n}^n, x_1\cdots x_n).$$ 
\end{theorem}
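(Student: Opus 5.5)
The plan is to read off from \cref{mslant} a single connected saturated set $G$ that realizes the slant, and then to pin $G$ down, and with it $I$, by comparing the exponents in \cref{main1}. With $d=n$ we get $\ell=\lfloor\frac{n-1}{n-1}\rfloor=1$ and $r_0=0$, so the bound \eqref{maxslant1} equals $\frac{n^2}{n+2}$. The ideal in the statement is the extremal ideal of \cref{main1} in this case: there $m_0=x_1\cdots x_n$ and $m_1=x_n^n$.

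\emph{Step 1: fix the witnessing set.} Assume $\slant(R)=\frac{n^2}{n+2}$. By \cref{mslant} there is $G\in\tilde K(I)$ with $c(G)=1$ and $F(G)=\frac{\deg(m_G)}{3+h(G)}=\frac{n^2}{n+2}$. Set $j_0=\deg(m_G)$. Since $m(I)\le n$, we have $j_0\le n\cdot n$.

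\emph{Step 2: force $j_0=n^2$.} The proof of \cref{main1} uses minimal rate $\rate(R)=n-1$ to get $F(G)\le f(j_0)$, where $f$ is the function of \cref{mfb}. Here I would instead apply the rate bound directly to the index $3+h(G)$, which carries a nonzero term of $\bo_R$ of degree $j_0$. Running through the proof of \cref{mfb} with $d=n$, the only argument in $[1,n^2]$ where $f$ attains $\frac{n^2}{n+2}$ should be $x=n^2=(n+1)(n-1)+1$. The reason is that $f$ strictly decreases and then increases on each block $[k(n-1)+1,(k+1)(n-1)+1]$, and the right endpoints give strictly increasing values because $n-1\ge 2$. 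So $j_0=n^2$, and then $h(G)=n-1$.

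\emph{Step 3: identify $G$ and $I$.} From $\deg(m_G)=n^2$ and the fact that every generator has degree $\le n$, we get $m_G=x_1^n\cdots x_n^n$. A generator contributing the exponent $n$ of $x_i$ must be $x_i^n$, so all pure powers $x_i^n$ lie in $G$. Every monomial of degree $\le n$ divides $m_G$, and $G$ is saturated with $c(G)=1$, so $G=M(I)$.

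Write $M(I)=P\cup E$, where $P=\{x_1^n,\dots,x_n^n\}$ are the pure powers and $E$ is the nonempty set of mixed generators. The elements of $P$ are pairwise coprime, so a subset of $G$ fails to be a face of $\Delta_G$ exactly when it contains $P$ together with a subset $T\subseteq E$ that makes the gcd-graph of $P\cup T$ connected. Hence
\[
\Delta_G=(\partial\sigma_P * 2^{E})\cup(\sigma_P*\Gamma),
\]
where $\Gamma$ is the complex of nonconnecting subsets of $E$. The first piece is contractible since $E\neq\emptyset$, and the two pieces meet in $\partial\sigma_P*\Gamma$. A Mayer--Vietoris argument then gives $\tilde H_j(\Delta_G)\cong\tilde H_{j-n}(\Gamma)$, up to checking the exact shift.

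With $h(G)=n-1$ this forces $\tilde H_{-1}(\Gamma)\ne0$, so $\Gamma=\{\emptyset\}$. That means every single element of $E$ connects all of $P$ by itself. Such a monomial is divisible by every variable and has degree $\le n$, so it equals $x_1\cdots x_n$. Therefore $E=\{x_1\cdots x_n\}$ and $I=(x_1^n,\dots,x_n^n,x_1\cdots x_n)$.

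\emph{Main obstacles.} I expect the hardest parts to be Step 2 and the exact homological shift in Step 3. In Step 2 one must make sure that equality really forces $j_0=n^2$ with $i$ exactly $n+2$. In particular, the oddness of $n$ has to enter somewhere, presumably to rule out ties in \cref{mfb}, or alternative maximizers coming from the rate bound $\frac{j_0-1}{i_0-1}\le n-1$ when $n+2$ and $n^2$ share factors. For even $n$ one can scale, for example $j_0=\frac{n^2}{2}$ with $i_0=\frac{n+2}{2}$, and the same ratio might be achieved differently. If necessary, I would handle this by first checking directly that smaller $j_0$ with $\frac{j_0}{i_0}=\frac{n^2}{n+2}$ is impossible when $\gcd(n^2,n+2)=1$, which is exactly the odd case.
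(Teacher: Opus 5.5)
Your proposal is correct, and it differs from the paper at both key steps. One justification in Step 2 has to be replaced.

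\textbf{Step 2.} The paper obtains $\deg(m_G)=n^2$ and $h(G)=n-1$ purely arithmetically: for odd $n$ one has $\gcd(n^2,n+2)=1$, and $\deg(m_G)\le n^2$; this is your fallback. Your primary route through the rate bound works, but not for the reason you give. The index $3+h(G)$ need not carry a nonzero term of $\bo_R$ in degree $\deg(m_G)$, because in \eqref{bd1} sets $G'$ with $c(G')$ of different parity contribute with opposite signs. For instance, take $I=(x_1,x_2)^2\subset\kk[x_1,x_2]$. The set $G=M(I)$ has $c(G)=1$ and $F(G)=1=\slant(R)$, yet its contribution $-t^4u^4$ is cancelled by the $+t^4u^4$ coming from $\{x_1^2,x_2^2\}$.

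There are two easy repairs:
\begin{enumerate}
\item Take $G$ in the set $\mathcal U$ from the proof of \cref{mslant}, where $t_{i^*}(\bo_R)=j^*$ does hold, and use $\rate(\bo_R)=\rate(R)=m(I)-1$.
\item Argue directly. Since $\tilde H_{h(G)}(\Delta_G)\ne 0$, there is a non-face $F\subseteq G$ with $|F|\le h(G)+2$, i.e.\ a connected set with $m_F=m_G$. Any $k$ connected generators of degree at most $n$ have an lcm of degree at most $k(n-1)+1$. Hence $\deg(m_G)-1\le (n-1)(h(G)+2)$.
\end{enumerate}

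With this in place, your worry about scaled solutions disappears. Substituting $i_0=j_0(n+2)/n^2$ into $j_0-1\le (n-1)(i_0-1)$ gives $(n-2)(1-j_0/n^2)\le 0$, so $j_0=n^2$ for every $n\ge 3$. Neither your Step 3 nor the paper's uses parity. So your argument actually proves the statement for all $n\ge 3$, including the even case that the paper supports only by experiment. For $n=2$ the statement fails, e.g.\ $I=(x_1^2)$.

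\textbf{Step 3.} The paper works in two stages. It first shows $x_1\cdots x_n\in G$, since otherwise every $(n+1)$-subset of $G$ is a face. It then excludes further generators by Alexander duality (Bj\"orner--Tancer), observing that $\Delta_G^*$ has exactly one face of size $l$. Your decomposition $\Delta_G=(\partial\sigma_P*2^E)\cup(\sigma_P*\Gamma)$ handles both stages at once. It writes $\Delta_G$ as two cones meeting in the $(n-1)$-fold suspension $\partial\sigma_P*\Gamma$. Using only Mayer--Vietoris and the join formula, it yields the full answer $\tilde H_j(\Delta_G)\cong \tilde H_{j-n}(\Gamma)$; the shift $n$ is correct.

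You should just record why $\Gamma$ is a simplicial complex containing $\emptyset$. First, $P$ alone is disconnected. Second, enlarging a connecting set keeps it connecting, because every generator is adjacent to some $x_i^n$.
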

\begin{proof}
By \cref{mslant}, there exists $G\in \tilde{K}(I)$ such that $\slant(R)=F(G)$ and $c(G)=1$. Using the expression for $\slant(R)$ in \cref{main1}, we have thus
\[
\frac{\deg(m_G)}{2+c(G)+h(G)}=\slant(R)=\frac{n^2}{n+2}\,.
\]
Since $n$ is odd, we have $\gcd(n^2,n+2)=1$. This implies $\deg(m_G)=n^2$ and $2+c(G)+h(G)=n+2$. Now, $\deg(m_G)=n^2$ implies that $\{x_1^n,\ldots ,x_n^n \}\subseteq G$  and $c(G)=1$ implies that $h(G)=n-1$. 

Suppose $ x_1\cdots x_n\notin G$. 
We claim that for all $E\subseteq G$ with $|E|=n+1$, we must have $E\in \Delta_G$. Indeed, if $E$ does not contain all of $x_1^n, \ldots, x_n^n$, then $m_E\ne m_G$. If $E=\{x_1^n, \ldots, x_n^n, m\}$ where $m\neq x_1\cdots x_n$ then $E$ is disconnected. We conclude that $\Tilde{H}_{n-1}(\Delta_G;\kk)=0$, which contradicts the fact that $h(G)=n-1$. Thus, we must have $x_1\cdots x_n \in G$. 

For the sake of contradiction, suppose $G\neq \{x_1^n,\ldots,x_n^n,x_1\cdots x_n\}$. Then we have $ G=\{x_1^n,\ldots,x_n^n,Y_0,Y_1,\ldots,Y_l\}$, where $Y_i\in M(I)$ and $l\geq 1$ and $Y_0=x_1\cdots x_n$. First, observe that $\Delta_G$ has the following description:
\[
  \Delta_G=\{E\subseteq G: |E|\leq n \text{ or } |E| = n+1\text{ with } Y_0\notin E\}\cup \{E\in \Delta_G:\; |E|\geq n+2\}.
\]
Consider the Alexander dual of $\Delta_G$, which is defined as $\Delta^*_G:=\{ E \subseteq G : G\smallsetminus E\notin \Delta_G\}$. We claim that $\Delta^*_G$ is a simplical complex on vertex set $\{Y_0,Y_1,Y_2,\cdots,Y_l\}$. Indeed, for $E\subseteq G$, if $x_i^{n}\in E$ for some $i$ then $m_{G\smallsetminus E}\neq m_{G}$, which implies that $G\smallsetminus E\in \Delta_G$ and therefore $E\notin \Delta_G^*$. 

Observe that $E\in \Delta^*_G$ implies  $G\smallsetminus E\notin \Delta_G$ and hence $|E|\leq l$. If $E\subseteq \{Y_0,\cdots,Y_{l}\}$ with $|E|=l$, then $G\smallsetminus E$ is disconnected iff $Y_0\in E$. Therefore the only face of size $l$ in $\Delta^*_G$ is $E=\{Y_1,\cdots,Y_l\}$, and in particular  $\tilde{H}_{l-1}(\Delta^*_G;\kk)=0$. Since, $\kk$ is a field, by the universal coefficient theorem, we have $\tilde{H}_{l-1}(\Delta^*_G;\kk)\cong \tilde{H}^{l-1}(\Delta^*_G;\kk)$ and from \cite[Theorem 1.1]{bjorner2009note} we have $\tilde{H}^{l-1}(\Delta^*_G;\kk)\cong \tilde{H}_{n-1}(\Delta_G;\kk)$. Therefore, $\tilde{H}_{n-1}(\Delta_G;\kk)=0$, and this contradicts the equality $h(G)\neq n-1$. Therefore $G=\{x_1^n,\ldots,x_n^n,x_1\cdots x_n\}$. Now for $\{x_1^n,\ldots,x_n^n,x_1\cdots x_n\}$ to be saturated in $M(I)$, we must have $I=( x_1^n, \ldots, x_{n}^n, x_1\cdots x_n )$.
\end{proof}

    Experimental evidence suggests that the statement in Theorem \ref{T6.8} is also true when $d=n$  is even.
    However, the next examples show that for $n\ne d$ there are several different monomial ideals that achieve the maximal slant. 
      Notation from \cref{main1} will be used.

\begin{example}
    Let $d\neq n$, and $d-1|n-1$. Set $ \mathcal{I}=(x_1^d,\ldots,x_n^d,m_0,\ldots,m_{\ell-1} )$ and $\mathcal{J}=( x_1^d,\ldots,x_n^d,m_0,\ldots,m_{\ell-1},Y )$, where $Y=x_1\cdots x_{d-1}x_{d+1}$. From Theorem \ref{main1}, we know $\mathcal{I}$ has maximal slant. To show $\mathcal{J}$ has maximal slant, it is enough to show that for the saturated set $G=M(\mathcal{J})$ we have $h(G)=n+\ell-2$. Observe that $\Delta_G$ has the following description:
    \[
    \Delta_{G}=\{E\subseteq G:\;\; |E|\leq n+\ell\text{ and } E\notin \{ G\smallsetminus m_0,G\smallsetminus Y \} \}\,.
    \]
    Clearly, $\Delta^*_G=\{\emptyset,\{ m_0\},\{Y\}\}$, implying $\tilde{H}^{0}(\Delta^*_G;\kk)\neq 0$, and from \cite{bjorner2009note} we have $\tilde{H}^{0}(\Delta^*_G;\kk)\cong \tilde{H}_{n+\ell-2}(\Delta_G;\kk)$, which gives $\tilde{H}_{n+\ell-2}(\Delta_G;\kk)\neq 0$ and $h(G)\leq n+\ell-2$. Since all $n+\ell-2$ faces are in $\Delta_G$ we have $h(G)\geq n+\ell-2$, therefore $h(G)=n+\ell-2$. Thus $\mathcal{J}$ has maximal slant.
\end{example}
\begin{example}
    Let $d\neq n$ and $d-1 \nmid n-1$, then in $k[x_1,x_2,\ldots,x_n]$, the ideals $\mathcal{I}=(x_1^d,\ldots,x_n^d,m_0,\ldots,m_{\ell})$ and $\mathcal{J}= ( x_1^d,\ldots,x_{n-1}^d,m_0,\ldots,m_{\ell})$, both have maximal slant, as the set $\{ x_1^d,\ldots,x_{n-1}^d,m_0,\ldots,m_{\ell}\}$ is a saturated set in both $M(\mathcal{I})$ and $M(\mathcal{J})$.
\end{example}


\bibliographystyle{abbrv}
\bibliography{references}

\end{document}